\theoremstyle{thmstyleone}%
\newtheorem{theorem}{Theorem}%  meant for continuous numbers
\newtheorem{proposition}[theorem]{Proposition}% 
\theoremstyle{thmstyletwo}%
\newtheorem{example}{Example}%
\newtheorem{remark}{Remark}%
\newtheorem{lemma}{Lemma}
\newtheorem{corollary}{Corollary}%
\theoremstyle{thmstylethree}%
\newtheorem{definition}{Definition}%
\DeclareMathOperator*{\Limsup}{Lim\,sup}
\begin{document}

\title[Characterization of Highly Robust Solutions in Multi-Objective Programming in Banach Spaces]{Characterization of Highly Robust Solutions in Multi-Objective Programming in Banach Spaces}

%%=============================================================%%
%% GivenName	-> \fnm{Joergen W.}
%% Particle	-> \spfx{van der} -> surname prefix
%% FamilyName	-> \sur{Ploeg}
%% Suffix	-> \sfx{IV}
%% \author*[1,2]{\fnm{Joergen W.} \spfx{van der} \sur{Ploeg} 
%%  \sfx{IV}}\email{iauthor@gmail.com}
%%=============================================================%%

%\author[1]{\fnm{Morteza} \sur{Rahimi}}\email{morteza.rahimi@uantwerp.be}

%\author[2]{\fnm{Majid} \sur{Soleimani-damaneh}}\email{soleimani@khayam.ut.ac.ir}
%\equalcont{These authors contributed equally to this work.}

\author[1]{\fnm{Morteza} \sur{Rahimi}}

\author[2]{\fnm{Majid} \sur{Soleimani-damaneh}}

\affil[1]{
\orgdiv{Department of Mathematics},
\orgname{University of Antwerp},
\orgaddress{\city{Antwerp},
%\postcode{100190},
\country{Belgium}},
\href{mailto:morteza.rahimi@uantwerp.be}{morteza.rahimi@uantwerp.be}}

\affil[2]{
\orgdiv{School of Mathematics, Statistics and Computer Science, College of Science},
\orgname{University of Tehran},
\orgaddress{\city{Tehran},
%\postcode{100190},
\country{Iran}},
\href{mailto:soleimani@khayam.ut.ac.ir}{soleimani@khayam.ut.ac.ir}}

%%==================================%%
%% Sample for unstructured abstract %%
%%==================================%%

\abstract{This paper delves into the challenging issues in uncertain multi-objective optimization, where uncertainty permeates nonsmooth nonconvex objective and constraint functions. In this context, we investigate highly robust (weakly efficient) solutions, a solution concept defined by efficiency across all scenarios. Our exploration reveals important relationships between highly robust solutions and other robustness notions, including set-based and worst-case notions, as well as connections with proper and isolated efficiency. Leveraging modern techniques from variational analysis, we establish necessary and sufficient optimality conditions for these solutions. Moreover, we explore the robustness of multi-objective optimization problems in the face of various uncertain sets, such as ball, ellipsoidal, and polyhedral sets.}

\keywords{Multi-Objective Optimization, Robust Optimization, Nonsmooth Optimization, Optimality Conditions, Mordukhovich Subdifferential}

%%\pacs[JEL Classification]{D8, H51}

\pacs[MSC Classification]{90C29, 90C17, 90C46}

\maketitle

\section{Introduction}\label{section1}

In diverse domains such as engineering, business, management, and more, decision problems are frequently characterized by conflicting objectives and simultaneous uncertainties arising from imperfect models, unknown data, incomplete information, perturbations, and the dynamic nature of the global environment. This necessitates sophisticated approaches to optimization, particularly in the context of Multi-objective Optimization Problems (MOPs). Acknowledging the ubiquity of uncertainty, various methodologies grounded in probabilistic, possibilistic, and deterministic perspectives have been developed \cite{ber,zam}. Among these, a deterministic approach known as robust optimization has emerged as a powerful framework to analyze mathematical programming problems in the face of data uncertainty \cite{ben-b,ber}.

The focal point of this paper is the exploration of robustness in the context of multi-objective optimization. This critical consideration aims to empower decision-makers by facilitating the identification of efficient solutions that exhibit resilience against perturbations in the problem data, specifically concerning objective and constraint functions. Researchers have extensively explored this field from various aspects, delving into both single- and multi-objective optimization; see, for example, \cite{ave, ben-b, ber, bon, soy} for single-objective problems and \cite{bok, chu-l, chu-o, chu-r, deb, ehr-m, geo, gob-r, ide-c, ide-t, ide-r, kur, rah-r, zam} for multi-objective problems. Practical applications of robust optimization are also covered in many works such as \cite{ben-ro, cal, fli}.

Soyster \cite{soy} introduced the first robustness technique for solving linear single-objective uncertain problems. This approach, pioneered by Ben-Tal and Nemirovski \cite{ben-rsu, ben-ro}, was extended as minmax robust optimization. Bertsimas et al. \cite{ber} conducted an insightful exploration of robustness in single-objective optimization facing data uncertainty in the constraints. Their work established valuable connections between adaptable models designed for decision-making across multiple stages. Calafiore \cite{cal} presented an efficient approach to dealing with a financial decision-making problem under uncertainty, obtaining optimal robust portfolios that are worst-case optimal from the mean-risk perspective.

As the frontier of research expanded, the integration of robustness with multiple criteria decision-making led to the emergence of robust multi-objective optimization, a field that has garnered significant attention in the last decade. Pioneering work by Deb and Gupta \cite{deb} laid the foundation for this field, where robustness is characterized by the efficiency of a solution concerning the mean representation of objective function values in its vicinity, rather than the original objective functions. Goberna et al. \cite{gob-r} studied multi-objective linear semi-infinite programming problems under constraint data uncertainty. However, various robustness notions exist in the multi-objective framework under uncertainty in objective functions, such as worse-case, set-based, norm-based, and so on.

Worst-case robustness, as an extension of minmax robustness, has appeared in recent works by Kuroiwa and Lee \cite{kur}, Bokrantz and Fredriksson \cite{bok}, Ehrgott et al. \cite{ehr-m}, Fliege and Werner \cite{fli}, and Chuong \cite{chu-o, chu-l, chu-r}. Furthermore, Bokrantz and Fredriksson \cite{bok}, Ehrgott et al. \cite{ehr-m}, Ide and K\"{o}bis \cite{ide-c}, Ide et al. \cite{ide-t}, and Ide and Sch\"{o}bel \cite{ide-r} studied other notions of robustness defined by set relations. In some other works, various researchers, like Georgiev et al. \cite{geo}, Zamani et al. \cite{zam}, and Rahimi and Soleimani-damaneh \cite{rah-r}, investigated norm-based robustness.

Another solution concept in robust optimization is highly robust efficiency/ optimality, which refers to efficient/optimal solutions that maintain their efficiency/ optimality in all scenarios. These solutions, first defined by Bitran \cite{bir} as necessarily efficient solutions, represent a significant advancement in addressing uncertainty while ensuring efficiency.

Ide and Sch\"{o}bel \cite{ide-r} initially introduced the notion of high robustness in multi-objective optimization, and Goberna et al. \cite{gob-g} and Dranichak and Wiecek \cite{dra} further studied these solutions. For uncertain convex MOPs with objective-wise uncertainty, Goberna et al. \cite{gob-g} calculated the radius of high robustness (for weak efficiency) and provided sufficient conditions for the existence of these solutions. In this framework, Dranichak and Wiecek \cite{dra} investigated these solutions in a linear setting.

This paper delves into the intricate realm of uncertain Multi-Objective Problems (MOPs), where objective-wise uncertainty permeates both nonsmooth nonconvex objective and constraint functions. Despite the extensive literature on robust optimization and multi-objective optimization individually, the optimization community has not yet extensively explored the intersection of these domains with highly robust efficient solutions in the context of uncertain nonsmooth nonconvex optimization.

One of our objectives is to leverage advanced techniques from variational analysis and generalized differentiation to establish necessary and sufficient optimality conditions for (local) highly robust (weakly) efficient solutions of uncertain MOPs. In subsequent sections, we investigate the relationships between highly robust solutions and other concepts, including set-based and worst-case robustness, proper efficiency, and isolated efficiency. Through this exploration, necessary and sufficient optimality conditions for local highly robust (weakly) efficient solutions are established. These conditions are formulated in terms of Mordukhovich and Clarke subdifferentials in both Asplund and Banach spaces, presenting a nuanced understanding of robust optimality.

The paper is structured as follows. In section \ref{section2}, we revisit fundamental definitions from variational analysis and present several auxiliary results. The connections between highly robust solutions and other solution concepts in uncertain multi-objective optimization are explored in section \ref{section3}. Section \ref{section4} is dedicated to establishing necessary and sufficient conditions for local highly robust (weakly) efficient solutions. The consideration of highly robustness in Multi-Objective Problems (MOPs) in the presence of various uncertain sets is outlined in section \ref{section5}.

%============================================================
%============================================================
%============================================================

\section{Preliminaries}\label{section2}

This section contains some basic notions and notations which are used in the sequel.
Throughout the paper, we utilize mainly standard notations from variational and nonsmooth analysis; see \cite{cla-f,mor-b1,roc-v}.
Except where otherwise stated, all spaces are assumed to be real Banach spaces.
Given a real Banach space $X$, we indicate the norm of the space by $\Vert \cdot\Vert$ and the canonical pairing between $X$ and
its topological dual $X^*$ by $\langle \cdot , \cdot \rangle$.
The notation $\mathbb{B}(\bar{x};\delta)$ is employed for the open ball centered at $\bar{x}\in X$ with radius $\delta>0$.
Furthermore, the convergence in the weak topology of $X$ and the weak$^*$ topology of $X^*$ are denoted by
$\overset{w}{\longrightarrow}$ and $\overset{w^*}{\longrightarrow}$, respectively.

Given a nonempty set $\Omega\subseteq X$, the notations $int\,\Omega$, $cl\,\Omega$, $cl^*\Omega$, $co\,\Omega$,
and $cone\,\Omega$ stand for the norm topology interior, the norm topology closure, the weak$^*$ topology closure, the convex hull,
and the conic hull of $\Omega$, respectively.
The set $\Omega\subseteq X$ is called a cone, if $\lambda x \in \Omega$ for any $x\in \Omega$ and $\lambda \geq 0$.

%=================================================

Consider the set-valued mapping $F:Y\rightrightarrows X^*$ where $Y$ is a Hausdorff topological vector space.
We say that $F$ is weak$^*$ closed at $y\in Y$ if for any nets $\{y_\nu\}_\nu \subset Y$ and $\{x^*_\alpha\}_\alpha \subset X^*$
with $x_\alpha^* \in F(y_\nu)$ such that $y_\nu \rightarrow y$ and $x^*_\alpha \overset{w^*}{\longrightarrow} x^*$,
one has $x^* \in F(y)$.

%--------------------------------------------------------------------------------------------------
\begin{lemma}\label{lem}
Let $S\subseteq Y$ be compact and the set-valued mapping $F:Y\rightrightarrows X^*$  be weak$^*$ closed at each $\bar{y}\in S$.
Then, $F(S):=\bigcup_{y\in S}F(y)$ is weak$^*$ closed.
\end{lemma}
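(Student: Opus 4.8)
The plan is to verify the weak$^*$ closedness of $F(S)$ straight from the net characterization of closedness in a topological space: a subset of $X^*$ is weak$^*$ closed iff it contains the weak$^*$-limit of each of its nets that happens to converge. So I would begin with an arbitrary net $\{x^*_\alpha\}_{\alpha\in A}\subseteq F(S)$ with $x^*_\alpha\overset{w^*}{\longrightarrow}x^*$ in $X^*$, and the goal is to show $x^*\in F(S)$.

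First I would lift the net to the parameter space: for each $\alpha\in A$, since $x^*_\alpha\in F(S)=\bigcup_{y\in S}F(y)$, choose $y_\alpha\in S$ with $x^*_\alpha\in F(y_\alpha)$. This produces a net $\{y_\alpha\}_{\alpha\in A}$ lying in the compact set $S$, which therefore admits a subnet $\{y_{\alpha_\nu}\}_\nu$ converging to some $\bar y\in S$. Passing to the corresponding subnet on the dual side, $\{x^*_{\alpha_\nu}\}_\nu$ is a subnet of a weak$^*$-convergent net, hence still converges weak$^*$ to $x^*$, and it still satisfies $x^*_{\alpha_\nu}\in F(y_{\alpha_\nu})$. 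At this point the pair of nets $\{y_{\alpha_\nu}\}_\nu$ and $\{x^*_{\alpha_\nu}\}_\nu$ meets exactly the hypotheses in the definition of $F$ being weak$^*$ closed at the point $\bar y$ (reading that definition, as intended, with the two nets sharing a common directed index set); since $\bar y\in S$, that property holds at $\bar y$ and yields $x^*\in F(\bar y)\subseteq F(S)$. Hence $F(S)$ is weak$^*$ closed.

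The only genuine point of care — and the reason the whole discussion is framed with nets rather than sequences — is that the weak$^*$ topology on $X^*$ need not be metrizable, so one cannot argue with sequences and sequential compactness; the compactness of $S$ must be used in its general topological form (every net has a convergent subnet), and ``weak$^*$ closed'' must be read via nets throughout. Granting that, the argument is the routine ``lift, extract a convergent subnet in the compact parameter set, apply pointwise closedness'' scheme, and it uses nothing about $X$, $X^*$, or $Y$ beyond the stated assumptions; the degenerate cases ($S=\emptyset$ or all $F(y)=\emptyset$) are trivial since $\emptyset$ is weak$^*$ closed.
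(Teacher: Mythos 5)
Your proof is correct and follows essentially the same route as the paper's: lift the weak$^*$-convergent net from $F(S)$ to a net in the compact set $S$, extract a convergent subnet, and apply the pointwise weak$^*$ closedness of $F$ at the limit point. Your version is in fact slightly more careful than the paper's, since you explicitly pass to the corresponding subnet of $\{x^*_\alpha\}$ rather than asserting the convergence ``without loss of generality,'' and you rightly flag that compactness must be used via nets since the weak$^*$ topology need not be metrizable.
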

%----------------------
\begin{proof}
Assume that the net $\{x^*_\alpha\}_\alpha \subseteq F(S)$ is weak$^*$ convergent to $\bar{x}^*\in X^*$.
For each $\alpha$, $x^*_\alpha\in F(S)$ implies that there exists $y_\alpha \in S$ such that $x^*_\alpha \in F(y_\alpha)$.
Now, since $S$ is compact, the net $y_\alpha$ has a convergent subnet.
Without loss of generality, assume that $y_\alpha \longrightarrow \bar{y}$ for some $\bar{y}\in S$.
Then, weak$^*$ closedness of $F$ yields $\bar{x}^* \in F(S)$.
\end{proof}
%--------------------------------------------------------------------------------------------------

Given a set-valued mapping $F:X\rightrightarrows X^*$ and $\bar{x}\in X$,
the sequential Painlev\'{e}--Kuratowski upper/outer limit of $F$ as $x \rightarrow \bar{x}$
is defined by
$$\displaystyle\Limsup_{x\rightarrow \bar{x}} F(x):=\left\{ x^* \in X^* :~ \exists\, x_\nu \rightarrow \bar{x},~
 \exists\, x^*_\nu \overset{w^*}{\longrightarrow} x^* : ~x^*_\nu \in F(x_\nu),~ \nu\in\Bbb N \right\}.$$

%=================================================

Let $\Omega \subseteq X$ be a nonempty set and $\varepsilon\geq 0$.
The set of all $\varepsilon$-normals to $\Omega$ at $\bar{x} \in\Omega$
is defined by
$$\widehat{N}_\varepsilon (\bar{x};\Omega):=\big\{x^* \in X^* :~ \limsup_{x \overset{\Omega}{\longrightarrow} \bar{x}}
\frac{\langle x^*, x-\bar{x}\rangle}{\Vert x-\bar{x}\Vert} \leq \varepsilon \big\},$$
where $x \overset{\Omega}{\longrightarrow} \bar{x}$ means $x \rightarrow \bar{x}$ with $x \in \Omega$; See \cite{mor-b1}.
For $\varepsilon=0$, this set is called Fr\'{e}chet normal cone to $\Omega$ at $\bar{x}$ and is denoted by
$\widehat{N}(\bar x;\Omega) := \widehat{N}_0(\bar x;\Omega)$.
Furthermore, by taking the sequential Painlev\'{e}--Kuratowski upper limits from $\widehat N_\varepsilon (x;\Omega)$
as $x\overset{\Omega}{\longrightarrow}\bar{x}$ and $\varepsilon \downarrow 0$,
the (basic/limiting) Mordukhovich normal cone to $\Omega$ at $\bar{x}$, denoted by $N(\bar{x};\Omega)$, is derived:
$N(\bar{x};\Omega):=\Limsup_{\stackrel{x\overset{\Omega}{\longrightarrow} \bar x}{\varepsilon\downarrow0}}
\widehat{N}_\varepsilon (x;\Omega).$

If $X$ is an Asplund space (a Banach space whose separable subspaces have separable duals) and $\Omega$ is closed, then
$N(\bar{x};\Omega)$ can be obtained by taking the sequential Painlev\'{e}--Kuratowski upper limits from $\widehat N (x;\Omega)$ as $x\longrightarrow\bar{x}$; see \cite[Theorem 2.35]{mor-b1}.

The weak contingent cone to $\Omega$ at $\bar{x}$, denoted by $T_w(\bar{x};\Omega)$, is defined as
$$T_w(\bar{x};\Omega) := \bigg\{d\in X :~ \exists (\{x_\nu\}_\nu \subseteq \Omega, \{t_\nu\}_\nu \subseteq \mathbb{R}):
~ t_\nu \downarrow 0, ~\frac{x_\nu - \bar{x}}{t_\nu}\overset{w}{\longrightarrow} d \bigg\}.$$
If $\Omega$ is locally convex at $\bar{x}$, i.e., $\Omega \cap \mathcal{B}$ is convex for some neighbourhood
$\mathcal{B}$ of $\bar{x}$, then
$$N(\bar{x};\Omega)=\widehat{N}(\bar{x};\Omega)=\big\{x^*\in X^* :~ \langle x^*, x-\bar{x}\rangle \leq 0,~
\forall x\in \Omega \cap \mathbb{B}(\bar{x};\delta)\big\},$$
see \cite[Proposition 1.5]{mor-b1}. On the other hand, if $X$ is reflexive, then due to \cite[Corollary 1.11]{mor-b1}, we get
$$\widehat{N}(\bar{x};\Omega)=T^*_w(\bar{x};\Omega):=
\big\{x^*\in X^* :~ \langle x^*, d\rangle \leq 0,~\forall d \in T_w(\bar{x};\Omega)\big\}.$$

%=================================================

Let $\varphi:X\rightarrow \overline{\mathbb{R}} := [-\infty,\infty]$ be an extended real-valued function.
The domain and epigraph of $\varphi$ are respectively defined as $dom\,\varphi :=\big\{x\in X:~ \varphi (x)<\infty\big\}$ and
$epi\,\varphi :=\big\{(x,\alpha) \in X \times \mathbb{R} ~:~ \alpha \geq \varphi(x)\big\}.$ We say $\varphi$ is proper if $\varphi(x) >-\infty$ for each $x\in X$ and $dom\,\varphi\neq \emptyset$.

In the rest of this section, let $\varphi$ be proper and $\bar{x}\in dom\,\varphi$.
The Mordukhovich subdifferential (known as general, limiting, and basic first-order subdifferential as well), singular subdifferential,
and Fr\'{e}chet subdifferential (presubdifferential) of
$\varphi$ at $\bar{x}$, denoted by $\partial\varphi(\bar{x})$, $\partial^\infty\varphi(\bar{x})$, and $\hat{\partial}\varphi(\bar{x})$, respectively, are defined as
$$\begin{array}{c}
\partial\varphi(\bar{x}) := \big\{x^* \in X^* :~ (x^*,-1) \in N((\bar{x},\varphi(\bar{x}));epi\,\varphi)\big\},\vspace*{1mm}\\
\partial^\infty\varphi(\bar{x}) := \big\{x^* \in X^* :~ (x^*,0) \in N((\bar{x},\varphi(\bar{x}));epi\,\varphi)\big\},\vspace*{1mm}\\
\hat{\partial}\varphi(\bar{x}) := \big\{x^* \in X^* :~ (x^*,0) \in \widehat{N}((\bar{x},\varphi(\bar{x}));epi\,\varphi)\big\}.
\end{array}
$$
The Clarke's generalized gradient (Clarke's subdifferential) of $\varphi$ at $\bar{x}$,
denoted by $\partial^C \varphi(\bar{x})$, is defined as
$$\partial^C\varphi(\bar{x}) := \big\{x^* \in X^* :~ \langle x^*,d\rangle \leq \varphi^\circ(\bar{x};d),~\forall d\in X\big\},$$
where $\varphi^\circ(\bar{x};d)$ is the Clarke's generalized directional derivative of $\varphi$ at $\bar{x}$ in the direction $d$ defined by $\varphi^\circ(\bar{x};d):=\limsup_{\stackrel{x\rightarrow \bar{x}}{t\downarrow 0}}
\frac{\varphi(x+td)-\varphi(x)}{t}.$
If $\varphi$ is locally Lipschitz at $\bar{x}$, the set $\partial^C\varphi(\bar{x})$ is nonempty.
The locally Lipschitz function $\varphi$ is called regular at $\bar{x}$ in the sense of Clarke if for each $d\in X$, one has
$\varphi^\circ(\bar{x};d)=\lim_{t\downarrow 0}\frac{\varphi(\bar{x}+td)-\varphi(\bar{x})}{t}.$
Invoking \cite[Corollary 1.81]{mor-b1}, if $\varphi$ is locally Lipschitz at $\bar{x}$ with constant $L>0$, then
\begin{equation}\label{eqs}
\partial^\infty\varphi(\bar{x})=\{0\},~\textmd{~and~}~\Vert x^*\Vert \leq L,~~~\forall x^*\in \partial\varphi(\bar{x}),
\end{equation}
%%M%%\begin{equation}\label{eqk}
%%M%%\Vert x^*\Vert \leq L,~~~\forall x^*\in \partial\varphi(\bar{x}),
%%M%%\end{equation}
and if, in addition, $X$ is an Asplund space, then $\partial\varphi(\bar{x})\neq \emptyset$ by \cite[Theorem 3.57]{mor-b1} and
\begin{equation}\label{eqcm}
\partial^C\varphi(\bar{x})=cl^*co\,\partial\varphi(\bar{x}),
\end{equation}
due to \cite[Corollary 2.25]{mor-b1}.

%%%In an Asplund space $X$, the function $\varphi$ is  sequentially normally epi-compact (SNEC) at $\bar{x}$ if it is lower semicontinuous around $\bar{x}$ and for any sequences $x_n\longrightarrow\bar{x}$ with $\varphi(x_n)\longrightarrow\varphi(\bar{x})$, $\lambda_n \downarrow 0$, and $x^*_n\in \lambda_n\hat{\partial}\varphi(x_n)$ one has $$\big[x^*_n\overset{w^*}{\longrightarrow} 0\big]\Longrightarrow
%%%\Vert x^*_n\Vert \longrightarrow 0,~~\text{as} ~~n\longrightarrow \infty.$$

Let $h: X\times \Theta \rightarrow \overline{\mathbb{R}}$ be a proper extended real-valued
function of two variables $(x,\theta) \in X\times \Theta$ where $\Theta$ is equipped with a Hausdorff topology.
The basic, singular, and Clarke's partial subdifferential of $h$, denoted by $\partial_xh$, $\partial_x^\infty h$, and
$\partial_x^C h$, respectively, are defined by
$$\partial_x h(\bar{x},\bar{\theta}):= \partial h^{\bar{\theta}}(\bar{x}),~~~~~
\partial_x^\infty h(\bar{x},\bar{\theta}):= \partial^\infty h^{\bar{\theta}}(\bar{x}),~~~~~
\partial_x^C h(\bar{x},\bar{\theta}):= \partial^C h^{\bar{\theta}}(\bar{x}),$$
for each $(\bar{x}, \bar{\theta})\in dom\, h$, where $h^{\bar{\theta}}(\cdot):=h(\cdot,\bar{\theta})$.\\%
We say that $h$ is locally Lipschitz in $x$ uniformly in $\theta$ around $\bar{x}$ with modulus $L>0$, if
there exists neighbourhood $\mathcal{B}$ of $\bar{x}$ such that
$$\vert h(x,\theta)-h(y,\theta)\vert \leq L\Vert x-y \Vert,~~~\forall x,y\in \mathcal{B},~~\forall \theta\in \Theta.$$

%=================================================

%\cite{ehr-b}.
In $\mathbb{R}^n$, given two vectors $a$ and $b$, we apply the symbols $a^T$ and $a^Tb$ to denote the transpose of $a$ and the standard inner product of $a$ and $b$, respectively. Furthermore, we use the Euclidean norm $\Vert a\Vert :=\sqrt{a^Ta}$.

We close this section by some preliminaries about optimality concepts for MOPs. Let $f:X\longrightarrow \mathbb{R}^p$ be a proper extended real-valued function with $p\geq 2$ and
$$f(x)=(f_1(x),f_2(x),\ldots,f_p(x))^T,~~x\in dom\,f.$$ An MOP is given by
\begin{equation}\label{prb}
\begin{array}{l}
\min\,f(x)~~s.t.~~x\in \Omega
\end{array}
\end{equation}
where $\Omega\subseteq X$ is the set of feasible solutions and $f$ is the objective function.
%%%To define solution concept for (\ref{prb}), we need some order to compare the vectors in objective space $\mathbb{R}^p$.
Considering two vectors $x,y\in \mathbb{R}^p$  with $p\geq 2$, we utilize the ordering relations $\leqq, \leq, <$ as follows:
$$\begin{array}{c}
x \leqq y ~(\text{resp.~} x<y) \Longleftrightarrow x_i \leq y_i~(\text{resp.~} x_i<y_i),~~\forall i\in I:=\{1,2,,\ldots, p\},\vspace*{2mm}\\
x\leq y \Longleftrightarrow x\leqq y \text{~and~} x\neq y.
\end{array}$$
%We utilize $x< y$ (resp. $x\leqq y$) when $x_i< y_i$ (resp. $x_i< y_i$) for each $i\in I_p$.
%Also, $x\leq y$ means $x\leqq y$ and $x\not =y$.
Also, we apply the corresponding cones
$\mathbb{R}^p_>:=\{x\in \mathbb{R}^p :x>0\}$,
$\mathbb{R}^p_\geq:=\{x\in \mathbb{R}^p :x\geq 0\},$ and
$\mathbb{R}^p_\geqq:=\{x\in \mathbb{R}^p :x\geqq 0\}$.

In this case, an element $\bar{x} \in \Omega$ is called an \textit{efficient (resp. weakly efficient, strictly efficient) solution} of (\ref{prb}) if there exists no $x\in \Omega\setminus \{\bar{x} \}$ such that $f(x)\leq f(\bar{x})$ (resp. $f(x) < f(\bar{x})$, $f(x)\leqq f(\bar{x})$).
%If $\epsilon=0$, $\bar{x}$ is briefly called an efficient (resp. a weakly efficient, a strictly efficient) solution of (\ref{prb}).
The vector $\bar{x}$ is called an \textit{isolated efficient solution} \cite{ehr-b,rah-i} of (\ref{prb}) if there exists some $L>0$ such that
$$\max_{i\in I} \{f_i(x)-f_i(\bar{x})\}\geq L\Vert x-\bar{x}\Vert,~~\forall x\in \Omega.$$

%%M%%In practical optimization problems, one of the important issues is to find efficient solutions with bounded trade-offs.
%%M%%These solutions, which are called properly efficient solutions, have been defined in various senses \cite{ehr-b,hen-p}.
The vector $\bar{x}\in \Omega$ is called a \textit{Henig properly efficient solution} of (\ref{prb}) if there exists a convex cone $K$
such that $\mathbb{R}^p_\geq\setminus\{0\} \subseteq int\,K$ and there exists no $x\in\Omega$ with $f(x)- f(\bar{x})\in K\setminus \{0\}$.

In the conditional part of the above definitions, if $\Omega$ is replaced by $\Omega \cap \mathcal{B}$ for some neighbourhood $\mathcal{B}$ of $\bar{x}$, then the corresponding concepts are defined locally.

%============================================================

\section{Highly Robust Solutions in Multi-Objective Optimization}\label{section3}

As a motivating example, we consider a portfolio problem, which is important, widely studied, and challenging in finance. Assume an investor who is going to invest some money, say $E$, in a financial market with $n$ risky assets such as securities and shares. The investor is seeking a portfolio with maximum expected return and minimum financial risk, among other criteria \cite{mar1}. Let the decision variable $x_i$ for $i=1, 2, \ldots, n$ represent the weight of the $i$-th asset. Assuming short-selling is not allowed, a feasible portfolio $x=(x_1, x_2, \ldots , x_n) \in \mathbb{R}^n$ must satisfy basic constraints: $x \geqq 0$ and $\sum_{i=1}^n x_i \leq E$. Additional constraints may be present, such as $\bar{a}_j^T x \leq \bar{b}_j$ for $j \in J$ (where $J$ is some index set). These constraints could correspond to maintaining certain indices, like sustainability and liquidity, within given intervals; see \cite{Kou}. Therefore, the set of feasible portfolios can be expressed as:
$\Omega:= \{x\in \mathbb{R}^n :~ x\geqq 0,~\sum_{i=1}^n x_i\leq E,~ \bar{a}^T_j x \leq \bar{b}_j,~j \in J\}.$
Let $r_i$ be the parameter corresponding to the rate of return of asset $i$, and $r=(r_1,r_2,\ldots,r_n)^T$ be the return vector. Assume $r$ is normally distributed with a mean return $\bar{r}$ and covariance matrix $\Sigma$, i.e., $r \sim \mathcal{N}(\bar{r},\Sigma)$. In this case, for a feasible portfolio $x \in \Omega$, the expected return is $\textbf{E}[r^Tx] = \bar{r}^Tx$, and the investment risk in a mean-variance framework is measured by $\textbf{var}[r^Tx] = x^T\bar{\Sigma}x$ \cite{mar1}. Therefore, the objective functions will be maximizing $\bar{r}^Tx$ and minimizing $x^T\bar{\Sigma}x$ over $x \in \Omega$.

In portfolio models encountered in practical applications, the parameters of the problem may not be precisely known due to factors such as data uncertainty, estimation errors, environmental influences, lack of information, and other sources of variability; refer to \cite{fli, Kou}. Consider $\mathcal{V}_j \subseteq \mathbb{R}^{n+1}$ as an uncertainty set for $(\bar{a}_j, \bar{b}_j)$, where $j \in J$. For each $(a_j, b_j) \in \mathcal{V}_j$, $a_j \in \mathbb{R}^n$ and $b_j \in \mathbb{R}$ are approximations of $\bar{a}_j$ and $\bar{b}_j$, respectively. Hence, the robust feasible set is defined as:
$$\Omega_{\mathcal{V}}:= \{x\in \mathbb{R}^n :~ x\geqq 0,~\sum_{i=1}^n x_i\leq E,~ a^T_j x \leq b_j,
~(a_j,b_j)\in \mathcal{V}_j,~j \in J\}.$$

Each $x\in \Omega_{\mathcal{V}}$ is called a robust feasible solution.
We do not have certain information about return and risk in the future, and these two
factors are uncertain in their essence. In practice, the sets in which these two factors may
change could be estimated by means of the historical data in a time window \cite{Kou}.
%%%On the other hand, the return vector $r$ has not been known beforehand and so,
The parameters $\bar{r}$ and $\bar{\Sigma}$ are
uncertain, driven from historical data.
Let $r\in \mathbb{R}^n_>$ and $\Sigma\in\Gamma^n$, where $\Gamma^n$ is
the cone of positive semi-definite $n\times n$ matrices, be estimations of $\bar{r}$ and $\bar{\Sigma}$, and
let $\mathcal{U}_1\subseteq \mathbb{R}^n_>$ and $\mathcal{U}_2\subseteq \Gamma^n$ with
$(r,\Sigma) \in \mathcal{U}:=\mathcal{U}_1 \times \mathcal{U}_2$ be the uncertainty sets
for $\bar{r}$ and $\bar{\Sigma}$, respectively. Then, the uncertain portfolio problem is modelled as follows:
$$\begin{array}{lll}
(P_{(r,\Sigma)}):~&\min &(-r^Tx,x^T\Sigma x)\vspace*{1mm}\\
&\,s.t.~&\sum_{i=1}^n x_i\leq E,\vspace*{1mm}\\
&&a^T_j x \leq b_j,~(a_j,b_j)\in \mathcal{V}_j,~j \in J,\vspace*{1mm}\\
&&x_i\geq 0,~i=1, 2,\ldots , n.
\end{array}$$
In this context, robust optimization plays a vital role in computing an appropriate portfolio that optimizes the objectives of $(P_{(r,\Sigma)})$ given the uncertainty sets $\mathcal{U}$ and $\mathcal{V}_j,~j \in J$. Since the objective functions are often in conflict, it is sensible to consider uncertainty sets dedicated to each function. This approach was first introduced by Goberna et al. \cite{gob-g}, who utilized the term \textit{objective-wise uncertainty} to characterize problems where the uncertainty set $\mathcal{U}$ is represented as $\mathcal{U}:=\prod_{i=1}^p \mathcal{U}_i$.

%==============================================

Assume that in the nominal Problem (\ref{prb}) the constraints are described by real-valued functions $g_j,~j\in J:=\{1,2,\ldots,q\}.$
Perturbing each objective function of this nominal problem by adding a linear term,
whose coefficients come from an uncertainty set, lead in the following parameterized MOP:
$$\begin{array}{lll}
(P_{u,v}):~&\min &(f_1(x)-\langle u_1,x\rangle, f_2(x)-\langle u_2,x\rangle,\ldots ,f_p(x)-\langle u_p,x\rangle)^T\vspace*{1mm}\\
&\,s.t.~& g_j(x,v_j)\leq 0,~~j\in J,
\end{array}$$
where $x\in X$ is the decision variable vector, $u_i \in\mathcal{U}_i,~i\in I$, and
$v_j \in\mathcal{V}_j,~j\in J$, are uncertain parameters. Here, $\mathcal{U}_i \subseteq X^*,~i\in I$, and $\mathcal{V}_j,~j\in J$,
equipped with a Hausdorff topology, are nonempty uncertainty sets. Furthermore, $u=(u_1,u_2,\ldots,u_p)$ and $v=(v_1,v_2,\ldots,v_q)$ are called scenarios and vary in uncertainty sets
$\mathcal{U}:=\prod_{i=1}^p \mathcal{U}_i$ and $\mathcal{V}:=\prod_{j=1}^q \mathcal{V}_j$, respectively.
Also,  $g_j:X\times \mathcal{V}_j\longrightarrow \overline{\mathbb{R}},~j\in J,$ are proper.

Now, we apply the robust optimization approach such that the constraint functions are satisfied for all uncertainties $\mathcal{V}_j,~j\in J$, and we replace $(P_{u,v})$ by the single parameterized MOP
$$
(P_{u}):~~~\min (f_1(x)-\langle u_1,x\rangle, f_2(x)-\langle u_2,x\rangle,\ldots ,f_p(x)-\langle u_p,x\rangle)^T~~s.t.~~x\in \Omega,
$$
where
\begin{equation}\label{rfs}
\Omega :=\{x\in X : ~g_j(x,v_j)\leq 0,~~\forall v_j\in\mathcal{V}_j,~~j=1,2,\ldots,q \}.
\end{equation}
In fact, an element $x\in X$ is a feasible solution of $(P_u)$, known as a robust feasible solution,
if it is a feasible solution of $(P_{u,v})$ for all scenarios.
The feasible set $\Omega$ is called the robust feasible set.
Shortly we denote the objective function of $(P_{u})$ as $f(x,u):=(f_1(x,u_1),f_2(x,u_2),\ldots,f_p(x,u_p))^T$ where
$f_i(x,u_i):=f_i(x)-\langle u_i,x\rangle,~i\in I$. %%%, for any $u\in \mathcal{U}$.
Furthermore, we consider the uncertain multi-objective optimization problem $(P_\mathcal{U}):=(P_u : u\in \mathcal{U})$ as
a family of parameterized Problems $(P_u)$.
Now, we are ready to define robust efficient solutions for $(P_\mathcal{U})$.

There are various robust solution notions in the literature; see \cite{ehr-m,fli,geo,ide-r}.
In the current work, the highly robustness, introduced by Ide and Sch\"{o}bel \cite{ide-r}, is investigated. This notion refers to the feasible solutions which are efficient in every scenario.

%-----------------------------------------------------------------------------------
\begin{definition}\cite{ide-r}\label{hrob}
A vector $\bar{x}\in \Omega$ is called a highly robust efficient (resp. highly robust weakly/strictly efficient) solution to $(P_\mathcal{U})$ if for any $u\in\mathcal{U}$, the vector $\bar{x}$ is an efficient
(resp. a weakly/strictly efficient) solution of $(P_u)$.
\end{definition}
%-----------------------------------------------------------------------------------

%-----------------------------------------------------------------------------------
\begin{example}\label{exhrob}
Consider the parameterized MOP
\begin{equation}\label{prbexhrob}\begin{array}{l}
\min\, (x - u_1 x, x^2 - u_2 x)~~~s.t.~~-1\leq x\leq 1,
\end{array}
\end{equation}
with the uncertainty set $\mathcal{U}:=[-0.5,0.5]\times [0, 1]$. We show that $\bar{x}:=0$ is a highly
robust efficient solution of $(P_\mathcal{U})$. If not, there exist some $u=(u_1,u_2)\in \mathcal{U}$ and
$x\in \mathbb{R}$ such that $x\neq 0$ and
$(x - u_1 x, x^2 - u_2 x)\leqq (0,0)$, and $(x - u_1 x, x^2 - u_2 x)\neq (0,0).$
This results in $u_1\geq 1$ when $x>0$, and $u_2<0$ when $x<0$. Both cases
contradict $(u_1,u_2)\in [-0.5,0.5]\times [0,1]$.
\end{example}
%-----------------------------------------------------------------------------------

\begin{proposition}\label{rem1}
If $\mathcal{U}_i,~i\in I$, are open sets in Banach space $X^*$ with weak$^*$ topology, then $\bar{x}\in \Omega$ is a highly robust efficient solution if and only if it is highly robust weakly efficient if and only if it is highly robust strictly efficient.
\end{proposition}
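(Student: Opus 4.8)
The plan is to reduce the three-way equivalence to a single perturbation argument, after clearing away the implications that hold unconditionally.

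\textbf{Step 1 (the free implications).} For a fixed scenario $u \in \mathcal{U}$ it is immediate from the definitions of efficiency that a strictly efficient solution of $(P_u)$ is efficient, and an efficient solution of $(P_u)$ is weakly efficient: if $f(x,u) < f(\bar{x},u)$ then $f(x,u) \leq f(\bar{x},u)$, and if $f(x,u) \leq f(\bar{x},u)$ then $f(x,u) \leqq f(\bar{x},u)$. Quantifying over all $u \in \mathcal{U}$ gives, with no assumption on the sets $\mathcal{U}_i$, the chain: highly robust strictly efficient $\Rightarrow$ highly robust efficient $\Rightarrow$ highly robust weakly efficient. It therefore suffices to establish the one remaining implication, that highly robust weak efficiency implies highly robust strict efficiency.

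\textbf{Step 2 (the perturbation argument).} I would prove this by contraposition. Suppose $\bar{x}$ is not highly robust strictly efficient; then there are a scenario $u = (u_1,\dots,u_p) \in \mathcal{U}$ and a point $x \in \Omega \setminus \{\bar{x}\}$ with $f_i(x) - f_i(\bar{x}) \leq \langle u_i, x-\bar{x}\rangle$ for all $i \in I$ (recall that $\Omega$, the robust feasible set, is common to all scenarios). Since $z := x-\bar{x} \neq 0$, the Hahn--Banach theorem provides $\phi \in X^*$ with $\langle \phi, z\rangle > 0$. For $t > 0$ put $u_i^t := u_i + t\phi$ and $u^t := (u_1^t,\dots,u_p^t)$. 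The map $t \mapsto u_i^t$ is continuous from $\mathbb{R}$ into $X^*$ endowed with the weak$^*$ topology (composing it with any evaluation functional $u^* \mapsto \langle u^*, \psi\rangle$, $\psi \in X$, yields an affine, hence continuous, function of $t$); since $\mathcal{U}_i$ is weak$^*$-open and $u_i \in \mathcal{U}_i$, there is $\bar t_i > 0$ with $u_i^t \in \mathcal{U}_i$ for $0 < t \leq \bar t_i$. Taking $t := \min_{i \in I} \bar t_i > 0$ (a finite minimum), we get $u^t \in \mathcal{U}$, and for this scenario
$$f_i(x) - \langle u_i^t, x\rangle - \big(f_i(\bar{x}) - \langle u_i^t, \bar{x}\rangle\big) = \big(f_i(x) - f_i(\bar{x}) - \langle u_i, z\rangle\big) - t\langle \phi, z\rangle < 0, \qquad i \in I,$$
because the first bracket is $\leq 0$ and $t\langle \phi,z\rangle > 0$. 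Thus $f(x,u^t) < f(\bar{x},u^t)$ with $x \in \Omega \setminus \{\bar{x}\}$, so $\bar{x}$ is not weakly efficient for $(P_{u^t})$ and hence not highly robust weakly efficient --- the desired contrapositive. Together with Step 1 this yields the cycle highly robust weakly efficient $\Rightarrow$ highly robust strictly efficient $\Rightarrow$ highly robust efficient $\Rightarrow$ highly robust weakly efficient, so the three notions coincide.

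\textbf{Main obstacle.} The only substantive point is Step 2: one must observe that a \emph{single} perturbation direction $\phi$ works for all objectives at once --- which is possible precisely because the candidate dominating point $x$ is shared by every component --- and then combine the weak$^*$-continuity of the shift $u_i \mapsto u_i + t\phi$ with the openness of $\mathcal{U}_i$ to keep the perturbed scenario feasible. The rest is routine manipulation of the definitions.
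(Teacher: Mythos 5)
Your proposal is correct and follows essentially the same route as the paper: the trivial chain of implications plus a Hahn--Banach perturbation of the scenario, using weak$^*$ openness of the $\mathcal{U}_i$ to keep the shifted scenario $u_i + t\phi$ feasible and turn the $\leqq$-domination into strict domination. The only cosmetic differences are that you argue by contraposition with a continuous parameter $t$ where the paper uses a sequence $\hat{u}_i + \tfrac{1}{\nu}x^*$ and a direct contradiction.
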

\begin{proof}
Assume that $\bar{x}\in\Omega$ is a highly robust weakly efficient solution of $(P_\mathcal{U})$ while it is not highly robust strictly efficient.
Then, there exist $\hat{x}\in \Omega\setminus \{\bar{x}\}$ and $\hat{u}\in\mathcal{U}$ such that
$f(\hat{x},\hat{u})\leqq f(\bar{x},\hat{u})$.
Consider the subspace $Y:=\{\alpha(\hat{x}-\bar{x}) : \alpha \in \Bbb R\}$ of $X$ and the continuous linear functional
$h:Y\longrightarrow \Bbb R$ defined as $h(\alpha(\hat{x}-\bar{x})):=\alpha\Vert\hat{x}-\bar{x}\Vert$.
Then, by a consequence of Hahn-Banach theorem \cite[Corollary 6.5]{con}, there exists a continuous linear functional $x^*\in X^*$ such
that $\langle x^*,\hat{x}-\bar{x}\rangle=\Vert\hat{x}-\bar{x}\Vert > 0$.
Obviously, $\hat{u}_i+\frac{1}{\nu}x^*\overset{w^*}{\longrightarrow} \hat{u}_i,~i\in I$, as $\nu\to\infty$. So, since
$\mathcal{U}_i,~i\in I$, are open sets, there exists $\nu\in \Bbb N$ such that
$\bar{u}_i:=\hat{u}_i+\frac{1}{\nu}x^*\in\mathcal{U}_i$ and
$\langle \hat{u}_i,\hat{x}-\bar{x}\rangle<\langle \bar{u}_i,\hat{x}-\bar{x}\rangle$, for any $i\in I$.
This implies $f(\hat{x},\bar{u})<f(\bar{x},\bar{u})$ which contradicts highly robust weak efficiency of $\bar{x}$. Therefore, highly robust weak efficiency implies highly robust strict efficiency. The other parts of the proposition are derived directly from Definition \ref{hrob}.
\end{proof}
%------------------------------

%-----------------------------------------------------------------------------------

In spite of the objective-wise uncertainty, the perturbation elements added to the objective functions ($u_i$'s) in Problem $(P_u)$ are not
necessarily equal to each other. However, in the following we show that if $\mathcal{U}_1=\mathcal{U}_2 =\ldots=\mathcal{U}_p$, then one may select
$u_i$ parameters equal to each other.
%-----------------------------------------------------------------------------------
\begin{theorem}\label{h-h}
Let $\bar{\mathcal{U}}=\mathcal{U}_1=\mathcal{U}_2 =\ldots=\mathcal{U}_p$.
The vector $\bar{x}\in\Omega$ is a highly robust (weakly/strictly) efficient solution of $(P_{\mathcal{U}})$ if and only if
$\bar{x}$ is a highly robust (weakly/strictly) efficient solution of $(\bar{P}_{\bar{\mathcal{U}}}):= (\bar{P}_{u} : u\in\bar{\mathcal{U}})$
where
$$\begin{array}{l}
(\bar{P}_u):~~~\min\, (f_1(x)-\langle u,x\rangle, f_2(x)-\langle u,x\rangle,\ldots, f_p(x)-\langle u,x\rangle)^T~~s.t.~~x\in \Omega.
\end{array}$$
\end{theorem}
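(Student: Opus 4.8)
The plan is to prove the two implications of the equivalence separately by directly unwinding Definition~\ref{hrob}. The direction ``$(P_{\mathcal{U}})\Rightarrow(\bar P_{\bar{\mathcal{U}}})$'' is immediate, while the converse is where the hypothesis $\mathcal{U}_1=\cdots=\mathcal{U}_p$ is really used.

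\emph{From $(P_{\mathcal{U}})$ to $(\bar P_{\bar{\mathcal{U}}})$.} Let $\bar x$ be highly robust (weakly/strictly) efficient for $(P_{\mathcal{U}})$ and fix any single $w\in\bar{\mathcal{U}}$. Since $\mathcal{U}_i=\bar{\mathcal{U}}$ for all $i\in I$, the constant tuple $(w,w,\ldots,w)$ belongs to $\mathcal{U}=\prod_{i=1}^p\mathcal{U}_i$, and---because the robust feasible set $\Omega$ of (\ref{rfs}) does not depend on the objective perturbations---the problem $(\bar P_w)$ is literally $(P_{(w,\ldots,w)})$. Hence (weak/strict) efficiency of $\bar x$ for $(P_{(w,\ldots,w)})$, which holds by assumption, is exactly (weak/strict) efficiency of $\bar x$ for $(\bar P_w)$; as $w\in\bar{\mathcal{U}}$ was arbitrary, $\bar x$ is highly robust (weakly/strictly) efficient for $(\bar P_{\bar{\mathcal{U}}})$.

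\emph{From $(\bar P_{\bar{\mathcal{U}}})$ to $(P_{\mathcal{U}})$.} I would argue by contraposition. Suppose $\bar x$ is \emph{not} highly robust efficient for $(P_{\mathcal{U}})$, so there are a scenario $u=(u_1,\ldots,u_p)\in\mathcal{U}$ and $\hat x\in\Omega\setminus\{\bar x\}$ with $f(\hat x,u)\leq f(\bar x,u)$. Recalling $f_i(x,u_i)=f_i(x)-\langle u_i,x\rangle$ and using linearity of each $u_i\in X^*$, this reads
$$g_i:=f_i(\hat x)-f_i(\bar x)\ \leq\ \langle u_i,\hat x-\bar x\rangle\qquad(\forall\, i\in I),$$
with strict inequality at some $i_0\in I$. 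The key device is to collapse the $p$ perturbation functionals into one: choose $i^*\in I$ with $g_{i^*}=\max_{i\in I}g_i$ and put $w:=u_{i^*}$, so $w\in\mathcal{U}_{i^*}=\bar{\mathcal{U}}$. Then for every $i\in I$,
$$g_i\ \leq\ g_{i^*}\ \leq\ \langle u_{i^*},\hat x-\bar x\rangle\ =\ \langle w,\hat x-\bar x\rangle,$$
i.e. $f_i(\hat x)-\langle w,\hat x\rangle\leq f_i(\bar x)-\langle w,\bar x\rangle$ for all $i$; equivalently $f(\hat x,(w,\ldots,w))\leqq f(\bar x,(w,\ldots,w))$ with $\hat x\neq\bar x$. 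This already shows $\bar x$ is not strictly efficient for $(\bar P_w)$, hence not highly robust strictly efficient for $(\bar P_{\bar{\mathcal{U}}})$. For plain efficiency one needs in addition one strict component: if $i_0$ is a maximizer of $g$ take $i^*:=i_0$, and then at $i=i_0$ the last ``$\leq$'' above is strict because $g_{i_0}<\langle u_{i_0},\hat x-\bar x\rangle$; otherwise $g_{i_0}<g_{i^*}\leq\langle w,\hat x-\bar x\rangle$, again strict at $i_0$. Either way $f(\hat x,(w,\ldots,w))\leq f(\bar x,(w,\ldots,w))$, contradicting efficiency of $\bar x$ for $(\bar P_w)$. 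The weak case is handled in the same way with all inequalities strict throughout: the failure of weak efficiency furnishes $\hat x\in\Omega\setminus\{\bar x\}$ with $g_i<\langle u_i,\hat x-\bar x\rangle$ for all $i$, whence $g_i\leq g_{i^*}<\langle w,\hat x-\bar x\rangle$ for all $i$ and $f(\hat x,(w,\ldots,w))<f(\bar x,(w,\ldots,w))$.

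The only genuinely delicate point is this reduction from the tuple $(u_1,\ldots,u_p)$ to a common $w\in\bar{\mathcal{U}}$: it is essential that $i^*$ be chosen to maximize the objective \emph{gaps} $g_i=f_i(\hat x)-f_i(\bar x)$ (rather than the pairings $\langle u_i,\hat x-\bar x\rangle$), after which one must verify that a strict component survives the substitution. Everything else is routine bookkeeping with the order relations $\leqq,\leq,<$ and Definition~\ref{hrob}.
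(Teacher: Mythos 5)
Your proof is correct, and the overall strategy is the same as the paper's: the forward direction is immediate because constant tuples $(w,\ldots,w)$ are scenarios of $(P_{\mathcal{U}})$, and the converse is obtained by collapsing a violating tuple $(u_1,\ldots,u_p)$ to a single functional $w\in\bar{\mathcal{U}}$ chosen by a maximization, then checking that the inequalities (including one strict component) survive. The only difference is the choice of maximizer: the paper takes $\bar u:=u_{i^*}$ with $i^*$ maximizing the \emph{pairings} $\langle u_i,\hat x-\bar x\rangle$, which makes the transfer of all inequalities, with their strictness pattern, completely immediate (since $g_i\le\langle u_i,\hat x-\bar x\rangle\le\langle\bar u,\hat x-\bar x\rangle$, and strictness at $i_0$ is inherited from the first inequality), whereas you maximize the gaps $g_i$ and therefore need the small case analysis to recover a strict component. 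Both choices work; your closing remark that it is ``essential'' to maximize the gaps rather than the pairings is inaccurate --- the paper's pairing-maximizing choice is in fact the slightly cleaner of the two --- but this does not affect the validity of your argument.
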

%-------------------
\begin{proof}
The proof of ``only if" part is straightforward.
To prove the converse, assume that $\bar{x}$ is a highly robust efficient solution of $(\bar{P}_{\bar{\mathcal{U}}})$,
while it is not a highly robust efficient solution of $(P_{\mathcal{U}})$.
Then, there exist $\hat{x}\in \Omega$ and $\hat{u}\in \mathcal{U}$ such that %%%$f(\hat{x},\hat{u})\leq f(\bar{x},\hat{u})$. So,
$$\begin{array}{l}
f_i(\hat{x}) - \langle \hat{u}_i,\hat{x}\rangle \leq f_i(\bar{x})- \langle \hat{u}_i,\bar{x}\rangle, ~\text{for~all}~i\in I,\vspace*{1mm}\\
f_i(\hat{x}) - \langle \hat{u}_i,\hat{x}\rangle < f_i(\bar{x})- \langle \hat{u}_i,\bar{x}\rangle, ~\text{for~some}~i\in I.
\end{array}$$
Set $\langle\bar{u},\hat{x}-\bar{x}\rangle =\max_{i\in I} \langle \hat{u}_i , \hat{x}-\bar{x}\rangle.$
We get $\bar{u}\in \bar{\mathcal{U}}$ and
$$\begin{array}{l}
f_i(\hat{x}) - \langle \bar{u},\hat{x}\rangle \leq f_i(\bar{x})- \langle \bar{u},\bar{x}\rangle, ~\text{for~all}~i\in I,\vspace*{1mm}\\
f_i(\hat{x}) - \langle \bar{u},\hat{x}\rangle < f_i(\bar{x})- \langle \bar{u},\bar{x}\rangle, ~\text{for~some}~i\in I,
\end{array}$$
which contradicts highly robust efficiency of $\bar{x}$ for $(\bar{P}_{\bar{\mathcal{U}}})$.
The proof for highly robust weak/strict efficiency is similar.
\end{proof}
%-----------------------------------------------------------------------------------

Note that, due to Definition \ref{hrob}, a highly robust (weakly/strictly) efficient solution of $(P_\mathcal{U})$
may not be a (weakly/strictly) efficient solution of (\ref{prb}) when $0\notin \mathcal{U}_i,~i\in I$.
In the following proposition, we provide a condition which ensures it. %%% happen with a stronger result.

%-----------------------------------------------------------------------------------
\begin{proposition}\label{hrwe-se}
Let $\bar{x}\in\Omega$ be a highly robust weakly efficient solution of $(P_\mathcal{U})$.
Then, $\bar{x}$ is a strictly efficient solution of $(\ref{prb})$ if
\begin{equation}\label{eq-hrwe-se}
\forall x\in\Omega\setminus \{\bar{x}\},~\exists u\in\mathcal{U}; ~\langle u_i,x-\bar{x}\rangle>0,~\forall i\in I.
\end{equation}
\end{proposition}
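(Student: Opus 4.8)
The plan is a direct proof by contradiction that exploits the single nontrivial hypothesis, namely the strict positivity condition (\ref{eq-hrwe-se}). Suppose $\bar{x}$ is not a strictly efficient solution of (\ref{prb}). By the definition of strict efficiency there is some $\hat{x}\in\Omega\setminus\{\bar{x}\}$ with $f_i(\hat{x})\le f_i(\bar{x})$ for every $i\in I$. Observe that $\hat{x}$ lies in the \emph{same} robust feasible set $\Omega$ that underlies every scenario problem $(P_u)$, so no feasibility issue arises when we later compare $\hat{x}$ with $\bar{x}$ inside a particular $(P_u)$.

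Next I would feed this specific $\hat{x}$ into (\ref{eq-hrwe-se}) to produce a scenario $u=(u_1,\ldots,u_p)\in\mathcal{U}$ with $\langle u_i,\hat{x}-\bar{x}\rangle>0$ for all $i\in I$. The core of the argument is then the elementary estimate: for each $i\in I$,
$$f_i(\hat{x},u_i)=f_i(\hat{x})-\langle u_i,\hat{x}\rangle\le f_i(\bar{x})-\langle u_i,\hat{x}\rangle=f_i(\bar{x},u_i)-\langle u_i,\hat{x}-\bar{x}\rangle<f_i(\bar{x},u_i),$$
where the first inequality uses $f_i(\hat{x})\le f_i(\bar{x})$ and the strict one uses $\langle u_i,\hat{x}-\bar{x}\rangle>0$. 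Hence $f(\hat{x},u)<f(\bar{x},u)$ with $\hat{x}\in\Omega\setminus\{\bar{x}\}$, which contradicts the fact that, by high robustness, $\bar{x}$ is a weakly efficient solution of $(P_u)$ for this scenario $u$.

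There is no genuinely hard step; the only points requiring care are the bookkeeping of the term $\langle u_i,\hat{x}-\bar{x}\rangle$ when passing from the nominal values $f_i$ to the perturbed values $f_i(\cdot,u_i)=f_i(\cdot)-\langle u_i,\cdot\rangle$, and the order of quantifiers — condition (\ref{eq-hrwe-se}) must be invoked \emph{after} $\hat{x}$ has been fixed, since the scenario $u$ is permitted to depend on $\hat{x}$. Finally, since strict efficiency is the strongest of the three efficiency notions in (\ref{prb}), establishing it automatically yields efficiency and weak efficiency of $\bar{x}$ as well, so no separate cases are needed.
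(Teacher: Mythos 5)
Your proof is correct and follows essentially the same route as the paper: a contradiction argument that takes the $\hat{x}$ witnessing failure of strict efficiency, invokes condition (\ref{eq-hrwe-se}) for this $\hat{x}$ to obtain a scenario $u$, and derives $f(\hat{x},u)<f(\bar{x},u)$, contradicting highly robust weak efficiency. The bookkeeping of the term $\langle u_i,\hat{x}-\bar{x}\rangle$ matches the paper's computation exactly.
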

%-------------------
\begin{proof}
Assume on the contrary that $\bar{x}$ is not a strictly efficient solution of (\ref{prb}).
Then, there exists $\hat{x}\in\Omega\setminus \{\bar{x}\}$ such that $f_i(\hat{x})\leq f_i(\bar{x})$ for each $i\in I.$
Also, by (\ref{eq-hrwe-se}), we get $\langle \hat{u}_i,\hat{x}-\bar{x}\rangle > 0,~i\in I$, for some $\hat{u}\in \mathcal{U}$.
So, we have
$$f_i(\hat{x})-f_i(\bar{x})\leq 0< \langle \hat{u}_i,\hat{x}-\bar{x}\rangle,~~\forall i\in I,$$
which implies
$$f_i(\hat{x})-\langle \hat{u}_i,\hat{x}\rangle < f_i(\bar{x})-\langle \hat{u}_i,\bar{x}\rangle,~~\forall i\in I.$$
This contradicts highly robust weak efficiency and the proof is complete.
\end{proof}
%-----------------------------------------------------------------------------------

In the continuation of this section, we explore the connections between highly robust efficiency and various notions found in the literature, including robust, proper, and isolated solutions.

Set-based relation is one of the most well-known tools to investigate robustness in multi-objective optimization.
%%%Set-based robust efficiency, convex-hull robust efficiency, lower/alternative set less ordered (strict/weak) efficiency, certainly less ordered (strict/weak) efficiency, and lightly robust efficiency are some of these notions; see \cite{bok,ehr-m,ide-c,ide-t,ide-r} for definitions and more information.
In the following, we provide the definition of set-based robust efficient solutions, introduced by Ehrgott et al. \cite{ehr-m}.

%-----------------------------------------------------------------------------------
\begin{definition}\label{srob}
The vector $\bar{x}\in \Omega$ is called a set-based robust efficient solution of $(P_\mathcal{U})$, if there exists no $x\in \Omega$ such that
$f_\mathcal{U}(x)\subseteq f_\mathcal{U}(\bar{x})-\mathbb{R}^p_\geq,$
where $f_\mathcal{U}(x)=\{f(x, u): u\in \mathcal{U}\}$.
\end{definition}
%-----------------------------------------------------------------------------------

Minmax (worst-case) robust optimality is another outstanding notion for uncertain optimization.
This solution concept was initially introduced by Soyster \cite{soy}, and then it was extensively studied
by Ben-Tal and Nemirovski \cite{ben-ro} and Ben-Tal et al. \cite{ben-b}. Fliege and Werner \cite{fli},
Bokrantz and Fredriksson \cite{bok}, and Kuroiwa and Lee \cite{kur} developed this concept for multi-objective programming.

Consider the uncertain multi-objective optimization problem $(P_\mathcal{U})$.
For each $i\in I$, define $F_i^{\mathcal{U}_i}:X\longrightarrow \overline{\mathbb{R}}$ by
$F_i^{\mathcal{U}_i}(x) := \sup_{u_i\in\mathcal{U}_i} f_i(x, u_i),$
and $F^\mathcal{U} : X\longrightarrow \mathbb{R}^p$ by $F^\mathcal{U}(x) := (F_1^{\mathcal{U}_1}(x), \ldots , F_p^{\mathcal{U}_p}(x))^T$.

%-----------------------------------------------------------------------------------
\begin{definition}\label{wrob}
The vector $\bar{x}\in \Omega$ is called a worst-case robust efficient solution of $(P_\mathcal{U})$, if it is an efficient solution of \begin{equation}\label{prbw}
\begin{array}{l}
\min\, F^{\mathcal{U}}(x)~~~s.t.~~~x \in \Omega.
\end{array}
\end{equation}
\end{definition}
%-----------------------------------------------------------------------------------

Theorem \ref{h-sw} shows that highly robustness is sufficient for set-based and worst-case robustness.

%-----------------------------------------------------------------------------------
\begin{theorem}\label{h-sw}
Let the uncertainty sets $\mathcal{U}_i,~i\in I$, be $w^*$-compact. If $\bar{x}\in\Omega$ is a highly robust efficient solution of $(P_\mathcal{U})$, then $\bar{x}$ is a set-based and worst-case robust efficient solution of $(P_\mathcal{U})$.
\end{theorem}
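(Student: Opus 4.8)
The plan is to establish the two conclusions in sequence: first the worst-case assertion, directly from high robustness, and then the set-based assertion by reduction to it. The $w^*$-compactness hypothesis enters in only one place: for any fixed $y\in X$ the functional $u^*\mapsto\langle u^*,y\rangle$ is $w^*$-continuous on $X^*$, so its restriction to the $w^*$-compact set $\mathcal{U}_i$ is bounded and attains its extrema. Consequently, for each $z\in dom\,f$ the value $F_i^{\mathcal{U}_i}(z)=f_i(z)-\inf_{u_i\in\mathcal{U}_i}\langle u_i,z\rangle$ is finite and the supremum defining $F_i^{\mathcal{U}_i}(z)$ is attained at some point of $\mathcal{U}_i$, which I will call a worst-case scenario for $z$ in objective $i$. (If $z\notin dom\,f$ then some $F_i^{\mathcal{U}_i}(z)=+\infty$, a situation that cannot arise below.)

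For the worst-case part I would argue by contradiction. If $\bar{x}$ is not worst-case robust efficient, there is $x\in\Omega\setminus\{\bar{x}\}$ with $F^{\mathcal{U}}(x)\leq F^{\mathcal{U}}(\bar{x})$, which in particular forces $x\in dom\,f$. Choosing, for each $i$, a worst-case scenario $\bar{u}_i$ for $\bar{x}$ and setting $\bar{u}=(\bar{u}_1,\ldots,\bar{u}_p)\in\mathcal{U}$, one has $f_i(x,\bar{u}_i)\leq F_i^{\mathcal{U}_i}(x)\leq F_i^{\mathcal{U}_i}(\bar{x})=f_i(\bar{x},\bar{u}_i)$ for every $i\in I$, with strict inequality at any index where the $i$-th coordinates of $F^{\mathcal{U}}(x)$ and $F^{\mathcal{U}}(\bar{x})$ differ. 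Hence $f(x,\bar{u})\leq f(\bar{x},\bar{u})$ with $x\in\Omega\setminus\{\bar{x}\}$, contradicting the efficiency of $\bar{x}$ for $(P_{\bar{u}})$, i.e., contradicting high robustness.

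For the set-based part, suppose there is $x\in\Omega$ with $f_\mathcal{U}(x)\subseteq f_\mathcal{U}(\bar{x})-\mathbb{R}^p_\geq$; again this forces $x\in dom\,f$. Picking worst-case scenarios $\hat{u}_i$ for $x$, the point $y:=(F_1^{\mathcal{U}_1}(x),\ldots,F_p^{\mathcal{U}_p}(x))=f(x,\hat{u})$ lies in $f_\mathcal{U}(x)$, so by the inclusion there exist $z\in f_\mathcal{U}(\bar{x})$ and $d\in\mathbb{R}^p_\geq$ with $y=z-d$. Since each coordinate of any point of $f_\mathcal{U}(\bar{x})$ is bounded above by the corresponding $F_i^{\mathcal{U}_i}(\bar{x})$, this gives $F_i^{\mathcal{U}_i}(x)=z_i-d_i\leq z_i\leq F_i^{\mathcal{U}_i}(\bar{x})$ for all $i\in I$, with strict inequality at some index (one with $d_i>0$, which exists because $d\neq 0$). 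Therefore $F^{\mathcal{U}}(x)\leq F^{\mathcal{U}}(\bar{x})$; if $x=\bar{x}$ this is impossible outright, while if $x\neq\bar{x}$ it contradicts the worst-case robust efficiency of $\bar{x}$ proved above. Hence no such $x$ exists, and $\bar{x}$ is set-based robust efficient.

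The calculations above are routine, so I expect the only delicate point to be the attainment of the suprema $F_i^{\mathcal{U}_i}(\cdot)$; this is precisely where $w^*$-compactness is used, and without it the worst-case scenarios $\bar{u}$ and $\hat{u}$---the very objects that let one extract a single-scenario domination and pit it against high robustness---need not exist. A secondary point, easy to overlook, is the need to dispose of the degenerate case $x=\bar{x}$ in the set-based argument.
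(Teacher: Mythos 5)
Your proof is correct, and its first half is exactly the paper's mechanism: use $w^*$-compactness (equivalently, $w^*$-continuity of $u^*\mapsto\langle u^*,\cdot\rangle$, which the paper phrases via the canonical embedding $\Phi_{\bar{x}}\in X^{**}$) to obtain a scenario $\bar{u}$ attaining $F_i^{\mathcal{U}_i}(\bar{x})=f_i(\bar{x},\bar{u}_i)$, then turn a worst-case domination into a single-scenario domination $f(x,\bar{u})\leq f(\bar{x},\bar{u})$ that contradicts high robustness. Where you diverge is the set-based half: you derive it as a corollary of the worst-case half, choosing worst-case scenarios $\hat{u}$ at the dominating point $x$ so that $F^{\mathcal{U}}(x)=f(x,\hat{u})\in f_{\mathcal{U}}(x)$ and concluding $F^{\mathcal{U}}(x)\leq F^{\mathcal{U}}(\bar{x})$, whereas the paper proves both conclusions independently from high robustness, handling the set-based case directly: it takes the point $f(\hat{x},\bar{u})\in f_{\mathcal{U}}(\hat{x})$, writes it as $f(\hat{x},\bar{u})\leq f(\bar{x},\hat{u})\leqq f(\bar{x},\bar{u})$ using only the minimality of $\langle\bar{u}_i,\bar{x}\rangle$, and contradicts efficiency in scenario $\bar{u}$. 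The paper's route therefore needs attainment of the supremum only at $\bar{x}$, while yours also needs it at $x$ (harmless under the stated $w^*$-compactness, but a slightly stronger use of the hypothesis); in exchange, your reduction makes the logical dependence between the two robustness notions explicit, and your explicit treatment of the degenerate case $x=\bar{x}$ is a point the paper passes over silently.
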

%------------------
\begin{proof}
Invoking \cite[Theorem 4.3(b)]{rud}, there exists a unique $\Phi_{\bar{x}}\in X^{**}$ such that $\langle \Phi_{\bar{x}},x^*\rangle =\langle x^*,\bar{x}\rangle$ for each $x^*\in X^*$, where $X^{**}:=(X^*)^*$ is the second dual of $X$.
Now, since the uncertainty sets $\mathcal{U}_i,~i\in I$, are $w^*$-compact, the continuous functional $\Phi_{\bar{x}}$ attains its infimum on
$\mathcal{U}_i,~i\in I$; see \cite[Page 402]{rud}. So, there exists some $\bar{u}\in\mathcal{U}$ such that for each $i\in I$,
$\langle \bar{u}_i ,\bar{x}\rangle =\langle \Phi_{\bar{x}}, \bar{u}_i\rangle =\min_{u_i\in\mathcal{U}_i} \langle \Phi_{\bar{x}}, u_i\rangle
=\min_{u_i\in\mathcal{U}_i} \langle u_i ,\bar{x}\rangle.$
If $\bar{x}$ is not set-based robust efficient, then $f_\mathcal{U}(\hat{x})\subseteq f_\mathcal{U}(\bar{x})-\mathbb{R}^p_\geq$
for some $\hat{x}\in\Omega$.
So, $f(\hat{x},\bar{u}) \in f_\mathcal{U}(\hat{x})$ implies that there exists some $\hat{u}\in\mathcal{U}$ such that
$$\begin{array}{ll}
f(\hat{x},\bar{u}) &\leq f(\bar{x},\hat{u})=f(\bar{x})-\big(\langle \hat{u}_1,\bar{x}\rangle, \langle \hat{u}_2,\bar{x}\rangle,\ldots, \langle \hat{u}_p,\bar{x}\rangle\big)^T\\
&\leqq f(\bar{x})-\big(\langle \bar{u}_1,\bar{x}\rangle, \langle \bar{u}_2,\bar{x}\rangle,\ldots, \langle \bar{u}_p,\bar{x}\rangle\big)^T=f(\bar{x},\bar{u}).
\end{array}$$
This contradicts the highly robustness of $\bar{x}$.%%M%% and hence, $\bar{x}$ is a set-based robust efficient solution of $(P_\mathcal{U})$.

If $\bar{x}$ is not a worst-case robust efficient solution, then there exists some $\hat{x}\in\Omega$ such
that $F^\mathcal{U}(\hat{x})\leq F^\mathcal{U}(\bar{x})$. On the other hand, for each $i\in I$,
$$\left\lbrace\begin{array}{l}
F_i^{\mathcal{U}_i}(\bar{x}) = \displaystyle\sup_{u_i\in \mathcal{U}_i}f_i(\bar{x},u_i) = f_i(\bar{x}) - \min_{u_i\in \mathcal{U}_i} \langle u_i ,\bar{x}\rangle = f_i(\bar{x}) - \langle \bar{u}_i ,\bar{x}\rangle = f_i(\bar{x},\bar{u}_i),\vspace*{1mm}\\
f_i(\hat{x},\bar{u}_i)\leq \displaystyle\sup_{u_i\in \mathcal{U}_i}f_i(\hat{x},u_i) = F_i^{\mathcal{U}_i}(\hat{x}).
\end{array}\right.$$
So, we get $f(\hat{x},\bar{u})\leq f(\bar{x},\bar{u})$, which contradicts the assumption.
%%M%%Thus, $\bar{x}$ is a worst-case robust efficient solution of $(P_\mathcal{U})$.
\end{proof}
%-----------------------------------------------------------------------------------

As an example, consider the parameterized Problem (\ref{prbexhrob}) with uncertainty set $\mathcal{U}=[-0.5,0.5]\times [0,1]$.
Due to Example \ref{exhrob} and Theorem \ref{h-sw}, $\bar{x}=0$ %%M%%is a highly robust efficient solution of \cref{prbexhrob} and so, it
is a set-based and worst-case robust efficient solution.

%%%\cref{exh-sw} illustrates \cref{h-sw}.

%%%\begin{example}\label{exh-sw}
%%%Consider the parameterized Problem \cref{prbexhrob} with uncertainty set $\mathcal{U}=[-0.5,0.5]\times [0,1]$.
%%%According to \cref{exhrob}, $\bar{x}=0$ is a highly robust efficient solution of \cref{prbexhrob} and so,
%%%it is a set-based and worst-case robust efficient solution by \cref{h-sw}.
%%%The set-based robustness of $\bar{x}$ can be concluded by utilizing a result from \cite{ehr-m} as well where it applies worst-case method on
%%%weighted sum function.
%%%For each $\lambda_1,\lambda_2> 0$, define the real-valued function $h_{\lambda}:\mathbb{R}\longrightarrow\mathbb{R}$
%%%$$h_\lambda(x):=\sup_{(u_1,u_2)\in\mathcal{U}}\lambda_1(x-u_1x)+\lambda_2(x^2-u_2x).$$
%%%By direct computing, we obtain that
%%%$$h_\lambda(x)=\left\lbrace\begin{array}{ll}
%%%\frac{3}{2}\lambda_1 x+\lambda_2 x^2,~~~~~&x\geq 0,\vspace*{2mm}\\
%%%\frac{1}{2}\lambda_1 x+\lambda_2 x^2 -\lambda_2 x,~~~~~&x< 0.
%%%\end{array}\right.$$
%%%Letting $\bar{\lambda}_1=2$ and $\bar{\lambda}_2=1$, we establish that $\bar{x}$ is an optimal solution of problem $\min h_{\bar{\lambda}}$ s.t. $-1\leq x\leq 1$.
%%%Thus, due to \cite[Theorem 4.3]{ehr-m}, $\bar{x}$ is a set-based robust efficient solution.
%%%\end{example}
%-----------------------------------------------------------------------------------

Analogous to Theorem \ref{h-sw}, it can be proven that high robustness is sufficient for other robustness concepts defined by means of set relations. In the next sections, we derive some sufficient conditions for high robustness. As per the above theorem, these conditions are also sufficient for set-based and worst-case robustness, rendering the verification of these solutions operational.

%------------------------

We proceed in this section by highlighting an interesting aspect of highly robust solutions. It is demonstrated that in finite-dimensional spaces, under certain conditions, the set of Henig properly efficient solutions contains the set of highly robust efficient solutions. Subsequently, we extend this result to infinite-dimensional spaces under specific constraint qualifications.

%-----------------------------------------------------------------------------------
\begin{theorem}\label{h-pf}
Assume that $X$ is finite-dimensional and $\bar{x}\in\Omega$ is a highly robust weakly efficient solution of $(P_\mathcal{U})$.
Let either (i) $\Omega$ be compact and $f_i$'s be locally Lipschitz at $\bar{x}$ or (ii) $\Omega$ be convex and $f_i$'s be locally convex at $\bar{x}$.
Then, $\bar{x}$ is a Henig properly efficient solution of $(\ref{prb})$ if the following condition is satisfied:
\begin{equation}\label{eq1-h-pf}
\forall d\in T,~\exists\, u\in \mathcal{U};~  \langle u_i,d\rangle > 0,~~\forall i\in I,
\end{equation}
where
$$T:=\{d\in X : ~\exists\{x_\nu\}_\nu\subseteq \Omega\setminus \{\bar{x}\};~\frac{x_\nu-\bar{x}}{\Vert x_\nu-\bar{x}\Vert}\longrightarrow d\}.$$
\end{theorem}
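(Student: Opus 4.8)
The plan is to argue by contradiction. Assume that (\ref{eq1-h-pf}) holds and that $\bar{x}$ is highly robust weakly efficient, but that $\bar{x}$ is \emph{not} a Henig properly efficient solution of (\ref{prb}); I would then exhibit a scenario $\bar{u}\in\mathcal{U}$ and a point of $\Omega$ witnessing that $\bar{x}$ is not weakly efficient in $(P_{\bar{u}})$, contrary to high robustness (Definition \ref{hrob}). The first move is to turn the failure of Henig proper efficiency into a concrete sequence: taking a decreasing family of closed convex ``dilating'' cones $K_\nu$ with $\mathbb{R}^p_\geq\subseteq int\,K_\nu$ and $\bigcap_\nu K_\nu=\mathbb{R}^p_{\geqq}$, and using the negation of the definition, one obtains for each $\nu$ a point $x_\nu\in\Omega$ with $f(\bar{x})-f(x_\nu)\in K_\nu\setminus\{0\}$, so in particular $x_\nu\neq\bar{x}$. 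Normalising and passing to a subsequence, the unit vectors $\frac{f(\bar{x})-f(x_\nu)}{\Vert f(\bar{x})-f(x_\nu)\Vert}$ converge to some $w$; since they eventually lie in every $K_\mu$ and the $K_\mu$ are closed, $w\in\bigcap_\mu K_\mu=\mathbb{R}^p_{\geqq}$, and $\Vert w\Vert=1$ forces $w\in\mathbb{R}^p_\geq$. Because $X$ is finite dimensional, after a further subsequence the normalised feasible directions $d_\nu:=\frac{x_\nu-\bar{x}}{\Vert x_\nu-\bar{x}\Vert}$ converge to a unit vector $d$, which belongs to $T$ by the very definition of $T$. Finally I would invoke (\ref{eq1-h-pf}) for this $d$, obtaining $\bar{u}=(\bar{u}_1,\dots,\bar{u}_p)\in\mathcal{U}$ with $\langle\bar{u}_i,d\rangle>0$ for all $i\in I$; set $c:=\tfrac12\min_{i\in I}\langle\bar{u}_i,d\rangle>0$, so $\langle\bar{u}_i,d_\nu\rangle\geq c$ for all $i$ and all large $\nu$.

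The core step is then to check that $x_\nu$ (for $\nu$ large) violates weak efficiency of $\bar{x}$ in the scenario $\bar{u}$, i.e.\ that $f_i(x_\nu)-\langle\bar{u}_i,x_\nu\rangle<f_i(\bar{x})-\langle\bar{u}_i,\bar{x}\rangle$ for every $i\in I$. Writing $f_i(\bar{x})-f_i(x_\nu)=\Vert f(\bar{x})-f(x_\nu)\Vert\,w_\nu^i$ with $w_\nu\to w$, and dividing the equivalent inequality $\langle\bar{u}_i,x_\nu-\bar{x}\rangle+\big(f_i(\bar{x})-f_i(x_\nu)\big)>0$ by $\Vert x_\nu-\bar{x}\Vert$, this becomes
$$\langle\bar{u}_i,d_\nu\rangle+\rho_\nu\,w_\nu^i>0,\qquad i\in I,\qquad\text{where}\quad \rho_\nu:=\frac{\Vert f(\bar{x})-f(x_\nu)\Vert}{\Vert x_\nu-\bar{x}\Vert}.$$
For the indices $i$ with $w^i>0$ one has $w_\nu^i\geq 0$ eventually, so the left-hand side is $\geq c>0$; for the indices with $w^i=0$ one has $w_\nu^i\to 0$, and \emph{provided the ratios $\rho_\nu$ stay bounded}, the term $\rho_\nu w_\nu^i\to 0$ is dominated by $\langle\bar{u}_i,d_\nu\rangle\geq c>0$. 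In either case the inequality holds for large $\nu$, which is the sought contradiction.

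The remaining — and genuinely delicate — point is the boundedness of $\rho_\nu$, and this is exactly where the alternative hypotheses (i)/(ii) enter. Under (i), compactness of $\Omega$ lets us pass to a subsequence with $x_\nu\to x^*\in\Omega$. If $x^*=\bar{x}$, then local Lipschitzness of the $f_i$ at $\bar{x}$ with a common modulus $L$ gives $\Vert f(\bar{x})-f(x_\nu)\Vert\leq\sqrt{p}\,L\,\Vert x_\nu-\bar{x}\Vert$ for large $\nu$, i.e.\ $\rho_\nu\leq\sqrt{p}\,L$. If $x^*\neq\bar{x}$, then (using continuity of $f$ on the compact set $\Omega$) the limit $f(\bar{x})-f(x^*)$ of the vectors $f(\bar{x})-f(x_\nu)$, whose directions tend to $w\in\mathbb{R}^p_\geq$, is either $0$ or a nonzero element of $\mathbb{R}^p_\geq$; in both cases $f_i(x^*)-f_i(\bar{x})\leq 0$ for all $i$, and since $d=\frac{x^*-\bar{x}}{\Vert x^*-\bar{x}\Vert}\in T$, the point $x^*$ itself already satisfies $f_i(x^*)-f_i(\bar{x})\leq 0<\langle\bar{u}_i,x^*-\bar{x}\rangle$ for all $i$, contradicting high robustness directly. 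Under (ii), convexity of $\Omega$ guarantees that every segment $[\bar{x},x_\nu]$ lies in $\Omega$; replacing each $x_\nu$ by the point of this segment at a fixed small distance from $\bar{x}$ inside the neighbourhood on which the $f_i$ are convex (hence locally Lipschitz with some modulus $L$) leaves $d_\nu$ unchanged and, via the convexity estimate along the segment, keeps the limiting sign pattern $w$, so that again $\rho_\nu\leq\sqrt{p}\,L$. Assembling these pieces contradicts the standing assumptions, so $\bar{x}$ must be Henig properly efficient. The main obstacle is clearly this last rate-control in case (i) (and its convex analogue in (ii)); everything preceding it is routine manipulation of the limiting objects $w$ and $d$.
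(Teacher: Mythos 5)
Your overall route is the paper's: you use the same Henig dilating cone family $K_\nu$, extract from the failure of proper efficiency a sequence $x_\nu\in\Omega\setminus\{\bar x\}$ with $f(\bar x)-f(x_\nu)\in K_\nu\setminus\{0\}$, pass to the limit direction $d\in T$, invoke (\ref{eq1-h-pf}) to get a scenario $\bar u$, and split into the same two regimes (objective gap controlled by the Lipschitz modulus near $\bar x$ versus the compact/convex escape case); the only organizational difference is that you argue along a single sequence instead of first proving the paper's uniform inclusion $K_\nu\cap cl\,\mathbb{B}(0;L)\subseteq\Gamma$. Case (i) of your argument is sound (your use of continuity of $f$ at the cluster point $x^*\neq\bar x$ is exactly what the paper also uses when it writes $f(y_\nu)\to f(\hat y)$, so I do not count it against you), and in fact your subcase $x^*\neq\bar x$ bypasses the paper's preliminary appeal to Proposition \ref{hrwe-se} by contradicting high robustness at $x^*$ directly.

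The genuine gap is in case (ii), at the phrase that the truncation ``keeps the limiting sign pattern $w$, so that again $\rho_\nu\leq\sqrt{p}\,L$.'' Boundedness of the new ratio is indeed automatic once $z_\nu$ sits in the Lipschitz neighbourhood, but boundedness is not what your key estimate needs: for the indices $i$ with $w^i=0$ you needed $\rho_\nu w_\nu^i\to 0$, i.e.\ boundedness \emph{together with} $w_\nu^i\to 0$. After replacing $x_\nu$ by $z_\nu=\bar x+t_\nu(x_\nu-\bar x)$, local convexity only yields the one-sided bound $\frac{f_i(\bar x)-f_i(z_\nu)}{\Vert z_\nu-\bar x\Vert}\geq\frac{f_i(\bar x)-f_i(x_\nu)}{\Vert x_\nu-\bar x\Vert}=\rho_\nu w_\nu^i$, which is vacuous precisely in the problematic situation $w^i=0$, $w_\nu^i<0$, $\rho_\nu\to\infty$; the Lipschitz bound then only places the new $i$-th quotient in $[-L,L]$, which may well be below $-c$, so the strict inequalities $\langle\bar u_i,d_\nu\rangle+\frac{f_i(\bar x)-f_i(z_\nu)}{\Vert z_\nu-\bar x\Vert}>0$ are not secured and the ``sign pattern'' claim is unjustified. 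The repair uses an idea you have not written down and which is exactly what the paper's detour through $z_\nu$ provides: from $f(\bar x)-f(z_\nu)\geqq t_\nu\big(f(\bar x)-f(x_\nu)\big)$ componentwise and $\mathbb{R}^p_\geqq\subseteq K_\nu$ with $K_\nu$ a convex cone, one gets $f(\bar x)-f(z_\nu)\in K_\nu$; the vectors $\frac{f(\bar x)-f(z_\nu)}{\Vert z_\nu-\bar x\Vert}$ thus form a bounded sequence whose $\nu$-th term lies in $K_\nu$, so the cones' compactness property gives a subsequence converging to some $\bar\alpha\in\mathbb{R}^p_\geqq$, and your final estimate goes through with $\bar\alpha$ in place of the unproved assertion about $w$. (Minor additional point: for indices with $\Vert x_\nu-\bar x\Vert<r$ the point at distance $r$ does not lie on the segment, so the truncation should simply keep $x_\nu$, which is already handled by the Lipschitz bound.)
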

%------------------
\begin{proof}
Due to Proposition \ref{hrwe-se}, $\bar{x}$ is an efficient solution of (\ref{prb}).
Set
$$\Gamma :=\{a\in \mathbb{R}^p :  ~\forall x \in\Omega\setminus\{\bar{x}\},~\exists u\in\mathcal{U};
~-\langle u_i,\frac{x-\bar{x}}{\Vert x-\bar{x}\Vert}\rangle<a_i,~\forall i\in I\}.$$
We get $\mathbb{R}^p_\geqq\subseteq\Gamma$ by (\ref{eq1-h-pf}). Due to \cite[Lemma 3.4]{hen-p} and \cite[Lemma 3.1]{hen-a}, there exists a sequence of
sets $\{K_\nu\}_\nu$ in $\mathbb{R}^p$ such that the following properties hold:
\begin{itemize}
\item[$\bullet$] The sets $K_\nu$'s are closed pointed convex cones;
\item[$\bullet$] $\mathbb{R}^p_\geqq\subseteq int\,K_\nu \cup \{0\}$ for each $\nu$;
\item[$\bullet$] $K_{\nu+1}\subseteq int\,K_\nu \cup \{0\}$ for each $\nu$;
\item[$\bullet$] Every bounded sequence $\{\alpha_\nu\}_\nu$ with $\alpha_\nu \in K_\nu$,
has a convergent subsequence with a limit in $\mathbb{R}^p_\geqq$.
\end{itemize}
Assume that $L_i$ is a Lipschitz modulus of $f_i$ on a neighbourhood of $\bar{x}$ and $L := \max_{i\in I} L_i$. We claim there exists some $\nu_0$ such that $\Big(K_\nu\cap cl\,\mathbb{B}(0;L)\Big)\subseteq\Gamma$ for each $\nu\geq \nu_0$.
If not, for each $\nu$, there exists $\alpha_\nu \in K_\nu\cap cl\,\mathbb{B}(0;L)$ with $\alpha_\nu \notin\Gamma$.
This means, for each $\nu$, there is $x_\nu \in\Omega\setminus \{\bar{x}\}$ such that
\begin{equation}\label{eq2-h-pf}
\forall u\in\mathcal{U},~\exists\,i_\nu\in I;~-\langle u_{i_\nu},\frac{x_\nu-\bar{x}}{\Vert x_\nu-\bar{x}\Vert}\rangle \geq \alpha_{i_\nu\nu}.
\end{equation}
Define $A^u_\nu:=\big\{i\in I:~-\langle u_{i},\frac{x_\nu-\bar{x}}{\Vert x_\nu-\bar{x}\Vert}\rangle \geq \alpha_{i\nu}\big\}$.
On account of (\ref{eq2-h-pf}), this set is nonempty.
Without loss of generality, by choosing an appropriate subsequence, $A^u_\nu$ is a constant set, $A^u$, for all $\nu$ indices.
Also, taking the properties of $K_\nu$ cones into account, $\alpha_\nu$ converges to some $\bar{\alpha}\in\mathbb{R}^p_\geqq\subseteq\Gamma$ with $\Vert\bar{\alpha}\Vert\leq L$.
Now, since the sequence $\frac{x_\nu-\bar{x}}{\Vert x_\nu-\bar{x}\Vert}$ is bounded, by choosing an appropriate subsequence if necessary,
$\frac{x_\nu-\bar{x}}{\Vert x_\nu-\bar{x}\Vert}\longrightarrow d$ for some $d\in X$.
So, $d\in T$.
On the other hand, by $\nu\longrightarrow\infty$ in (\ref{eq2-h-pf}), we get
$$\forall u\in\mathcal{U}:~~-\langle u_i,d\rangle\geq \bar{\alpha}_i\geq 0,~\forall i\in A^u,$$
which contradicts (\ref{eq1-h-pf}).
Hence, there exists  some $\nu_0\in\mathbb{N}$ such that $\Big( K_\nu\cap cl\,\mathbb{B}(0;L)\Big)\subseteq \Gamma$ for each $\nu\geq \nu_0$.
We show that $\bar{x}$ is an efficient solution of (\ref{prb}) with respect to the cone $K_\nu$ for some $\nu\geq \nu_0$.
If not, for each $\nu\geq \nu_0$ there exists $y_\nu \in \Omega\setminus \{\bar{x}\}$ such that
\begin{equation}\label{eq3-h-pf}
f(y_\nu)-f(\bar{x})\in -K_\nu \setminus \{0\}.
\end{equation}
In this situation, if there is some $\bar\nu \geq \nu_0$ such that $\Vert f(y_{\bar{\nu}})-f(\bar{x})\Vert \leq L\Vert y_{\bar{\nu}}-\bar{x}\Vert$,
as $\Big( K_{\bar{\nu}}\cap cl\,\mathbb{B}(0;L)\Big)\subseteq \Gamma$, we obtain that for some $u\in \mathcal{U},$
$$\begin{array}{l}
~~~-\langle u_i,\frac{y_{\bar{\nu}}-\bar{x}}{\Vert y_{\bar{\nu}}-\bar{x}\Vert}\rangle <
\frac{f_i(\bar{x})-f_i(y_{\bar{\nu}})}{\Vert y_{\bar{\nu}}-\bar{x}\Vert},~\forall i\in I\Longrightarrow -\langle u_i,y_{\bar{\nu}}-\bar{x}\rangle < f_i(\bar{x})-f_i(y_{\bar{\nu}}),~~\forall i\in I\vspace*{1mm}\\
\Longrightarrow f_i(y_{\bar{\nu}})-\langle u_i,y_{\bar{\nu}}\rangle < f_i(\bar{x})-\langle u_i,\bar{x}\rangle,~~\forall i\in I\Longrightarrow f(y_{\bar{\nu}},u)<f(\bar{x},u).
\end{array}
$$
This contradicts highly robust weak efficiency of $\bar{x}$.
Otherwise, let $\Vert f(y_\nu)-f(\bar{x})\Vert > L\Vert y_\nu-\bar{x}\Vert$ for each $\nu\geq \nu_0$.
Now, we continue the proof by considering two assumed cases:
first, compactness of $\Omega$ and locally Lipschitzness of $f_i$'s; and second, convexity of $\Omega$ and local convexity of $f_i$'s.\\
In the both cases, $f_i$'s are locally Lipschitz at $\bar{x}$ (note that locally convexity leads in locally Lipschitzness for real-valued functions).
So, $y_\nu$ has not any convergent subsequence to $\bar{x}$.
In the first case, as $\Omega$ is compact and $f_i$'s are locally Lipschitz at $\bar{x}$,
without loss of generality, by working with an appropriate subsequence, $y_\nu$ converges to some $\hat{y}\in\Omega$.
Then, $f(y_\nu) \longrightarrow f(\hat{y})$, $\hat{y} \neq \bar{x}$, and $f(\hat{y}) \neq f(\bar{x})$.
Furthermore, taking the properties of $K_\nu$ cones into account, $f(\hat{y}) \leq f(\bar{x})$ and this contradicts efficiency of $\bar{x}$.
Therefore, $\bar{x}$ is an efficient solution of (\ref{prb}) with respect to cone $K_\nu$ for some $\nu$.
Thus, $\bar{x}$ is a Henig properly efficient solution of (\ref{prb}).

Consider the second case: $\Omega$ is convex and $f_i$'s are locally convex at $\bar{x}$.
Since $y_\nu$ has not any convergent subsequence to $\bar{x}$, without loss of generality, there exists some scalar $r>0$ such that
$\Vert y_\nu -\bar{x}\Vert >r$.
Set $$z_\nu := \frac{r}{\nu \Vert y_\nu -\bar{x}\Vert} y_\nu +(1-\frac{r}{\nu \Vert y_\nu -\bar{x}\Vert}) \bar{x}= \frac{r(y_\nu -\bar{x})}{\nu \Vert y_\nu -\bar{x}\Vert}+\bar{x} \in \Omega\setminus \{\bar{x}\}.$$
As $z_\nu \longrightarrow \bar{x}$, from locally convexity of $f_i$'s we get
$$f(z_\nu)-\frac{r}{\nu \Vert y_\nu -\bar{x}\Vert} f(y_\nu) - (1-\frac{r}{\nu \Vert y_\nu -\bar{x}\Vert}) f(\bar{x})
\in \mathbb{R}^p_\geqq \subseteq int\, K_\nu \cup \{0\},$$
and by (\ref{eq3-h-pf}), $f(z_\nu) -f(\bar{x})\in K_\nu \setminus \{0\}.$
On the other hand, thanks to locally Lipschitzness of $f_i$'s,
$\Vert f(z_\nu)-f(\bar{x})\Vert \leq L\Vert z_\nu-\bar{x}\Vert$ for sufficiently large $\nu$.
Therefore, the desired result is obtained from the previous part by working with $z_\nu$ instead of $y_\nu.$ The proof is complete.
\end{proof}
%-----------------------------------------------------------------------------------

%Due to obtained result in next section, if $0\in int\,\mathcal{U}_i,~i\in I$, then the condition \cref{eq1-h-pf} is held.
%So, it is a accessible condition.

According to Theorem \ref{h-pf}, one can reject highly robustness of some solutions of special-structured problems easily, as checking Henig proper efficiency could be easy geometrically. For instance, in the bi-objective problem
$$\begin{array}{l}
\min\,(x_1,x_2),~~s.t.~~x_1^2+x_2^2\leq 1,
\end{array}$$
it is easy to show that $(0,-1)$ is not a Henig properly efficient.
So, by Theorem \ref{h-pf}, for each uncertainty set $\mathcal{U}$ that satisfies (\ref{eq1-h-pf}),
this solution is not highly robust.%%M%% weakly efficient.

Example \ref{exhrob} illustrated that $\bar{x}=0$ is a highly robust efficient solution of (\ref{prbexhrob}).
It is not difficult to see that condition (\ref{eq1-h-pf}) is not satisfied for this problem.
On the other hand, $\bar{x}=0$ is not a properly efficient solution of this problem for $u=(0,0)$.
Therefore, condition (\ref{eq1-h-pf}) in Theorem \ref{h-pf} cannot be omitted.

Now, we provide a Constraint Qualification (CQ) to obtain a result similar to Theorem \ref{h-pf} in infinite-dimensional spaces.
%%%In an infinite-dimensional Banach space, it may that a sequence in unit sphere convergent to 0 weakly; consider sequence $e_\nu$ where $\{e_\nu\}_\nu$ is an orthonormal unit basis of a Hilbert space, a Banach space with a norm generated by an inner product. Therefore, there is a property, called kadec property, that ensures the coincidence of weak and strong topology on unit sphere. It is said that the norm $\Vert \cdot \Vert$ on a Banach space is kadec if the weak and norm (strong) convergence agree on the boundary of its unit sphere. It is well known that every reflexive space admits an equivalent Kadec norm; see \cite{mor-b1,fab}. However, here, we use a weaker assumption.
We say CQ1 is satisfied if
$$0\notin T_w:=\{d\in X : ~\exists\{x_\nu\}_\nu\subseteq \Omega\setminus \{\bar{x}\};~\frac{x_\nu-\bar{x}}{\Vert x_\nu-\bar{x}\Vert}\overset{w}{\longrightarrow} d\}.$$

%%%Clearly, if $X$ is a Banach space equipped with a kadec norm or is a finite-dimensional Banach space, then CQ1 holds.

%-----------------------------------------------------------------------------------
\begin{theorem}\label{h-pi}
Assume that $X$ is a reflexive Banach space, $\bar{x}\in\Omega$ is a highly robust weakly efficient solution of $(P_\mathcal{U})$, and
CQ1 is satisfied.
If either (i) $\Omega$ is compact and $f_i$'s are locally Lipschitz at $\bar{x}$ or (ii) $\Omega$ is convex and $f_i$'s are locally convex at $\bar{x}$, then $\bar{x}$ is a Henig properly efficient solution of $(\ref{prb})$ if the following condition is satisfied:
\begin{equation}\label{eq-h-pi}
\forall d\in T_w,~\exists\, u\in \mathcal{U};~  \langle u_i,d\rangle > 0,~~\forall i\in I.
\end{equation}
\end{theorem}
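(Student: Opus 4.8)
The plan is to repeat the proof of Theorem~\ref{h-pf} almost line for line, the only essential change being that the finite-dimensional fact ``every bounded sequence admits a norm-convergent subsequence'' is replaced by its reflexive-space counterpart ``every bounded sequence admits a weakly convergent subsequence''; this is exactly why $T$ is replaced by $T_w$ in the hypothesis, and why CQ1 is imposed --- it guarantees $0\notin T_w$, so that condition~(\ref{eq-h-pi}) is consistent and so that the weak limits of normalized feasible directions produced below are nonzero. The first step is to observe that (\ref{eq-h-pi}) implies condition~(\ref{eq-hrwe-se}) of Proposition~\ref{hrwe-se}: given $x\in\Omega\setminus\{\bar x\}$, the constant sequence $x_\nu\equiv x$ shows $\frac{x-\bar x}{\Vert x-\bar x\Vert}\in T_w$, so (\ref{eq-h-pi}) supplies $u\in\mathcal{U}$ with $\langle u_i,x-\bar x\rangle>0$ for all $i\in I$; hence, by Proposition~\ref{hrwe-se}, $\bar x$ is already an efficient solution of~(\ref{prb}).

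Next I would introduce the same auxiliary set
$$\Gamma:=\big\{a\in\mathbb{R}^p:\ \forall x\in\Omega\setminus\{\bar x\},\ \exists u\in\mathcal{U};\ -\langle u_i,\tfrac{x-\bar x}{\Vert x-\bar x\Vert}\rangle<a_i,\ \forall i\in I\big\},$$
observe that $\mathbb{R}^p_\geqq\subseteq\Gamma$ by (\ref{eq-h-pi}), and invoke \cite[Lemma~3.4]{hen-p} and \cite[Lemma~3.1]{hen-a} to obtain, exactly as in the proof of Theorem~\ref{h-pf}, a sequence $\{K_\nu\}_\nu$ of closed pointed convex cones in $\mathbb{R}^p$ with the four listed properties; this step needs no modification, since $\mathbb{R}^p$ remains finite-dimensional. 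Writing $L:=\max_{i\in I}L_i$, the core of the argument is the covering claim: there is $\nu_0$ such that $K_\nu\cap cl\,\mathbb{B}(0;L)\subseteq\Gamma$ for all $\nu\geq\nu_0$.

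I would prove this claim by contradiction, obtaining $\alpha_\nu\in(K_\nu\cap cl\,\mathbb{B}(0;L))\setminus\Gamma$ and a witness $x_\nu\in\Omega\setminus\{\bar x\}$ such that for every $u\in\mathcal{U}$ there is $i_\nu\in I$ with $-\langle u_{i_\nu},\frac{x_\nu-\bar x}{\Vert x_\nu-\bar x\Vert}\rangle\geq\alpha_{i_\nu\nu}$. By the last of the four $K_\nu$-properties, a subsequence of $\{\alpha_\nu\}$ converges to some $\bar\alpha\in\mathbb{R}^p_\geqq$ with $\Vert\bar\alpha\Vert\leq L$; since $\{\frac{x_\nu-\bar x}{\Vert x_\nu-\bar x\Vert}\}$ is bounded, \emph{reflexivity of $X$} lets us pass to a further subsequence with $\frac{x_\nu-\bar x}{\Vert x_\nu-\bar x\Vert}\overset{w}{\longrightarrow}d$, and then $d\in T_w$ by definition, so $d\neq 0$ by CQ1; and for each fixed $u\in\mathcal{U}$ the set $A^u_\nu:=\{i\in I:\ -\langle u_i,\frac{x_\nu-\bar x}{\Vert x_\nu-\bar x\Vert}\rangle\geq\alpha_{i\nu}\}$, being a subset of the finite set $I$, stabilizes along a (possibly $u$-dependent) further subsequence to a nonempty set $A^u$, while the two earlier convergences persist. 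Passing to the limit along that subsequence --- here one uses that $u_i\in X^*$ is fixed, so $\langle u_i,\frac{x_\nu-\bar x}{\Vert x_\nu-\bar x\Vert}\rangle\to\langle u_i,d\rangle$ under weak convergence --- yields $-\langle u_i,d\rangle\geq\bar\alpha_i\geq0$ for every $i\in A^u$. As $A^u\neq\emptyset$ and $u\in\mathcal{U}$ was arbitrary, no $u$ satisfies $\langle u_i,d\rangle>0$ for all $i\in I$, contradicting (\ref{eq-h-pi}) at $d\in T_w$; this proves the covering claim.

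The remainder reproduces the final portion of the proof of Theorem~\ref{h-pf} without change. If $\bar x$ were not efficient with respect to $K_\nu$ for every $\nu\geq\nu_0$, one picks $y_\nu\in\Omega\setminus\{\bar x\}$ with $f(y_\nu)-f(\bar x)\in-K_\nu\setminus\{0\}$. If $\Vert f(y_{\bar\nu})-f(\bar x)\Vert\leq L\Vert y_{\bar\nu}-\bar x\Vert$ for some $\bar\nu\geq\nu_0$, then $\frac{f(\bar x)-f(y_{\bar\nu})}{\Vert y_{\bar\nu}-\bar x\Vert}\in K_{\bar\nu}\cap cl\,\mathbb{B}(0;L)\subseteq\Gamma$, and unwinding the definition of $\Gamma$ at $x=y_{\bar\nu}$ produces $u\in\mathcal{U}$ with $f(y_{\bar\nu},u)<f(\bar x,u)$, contradicting highly robust weak efficiency. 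Otherwise $\Vert f(y_\nu)-f(\bar x)\Vert>L\Vert y_\nu-\bar x\Vert$ for all $\nu\geq\nu_0$, and since the $f_i$'s are locally Lipschitz at $\bar x$ in both alternatives, no subsequence of $\{y_\nu\}$ tends to $\bar x$; then, exactly as in Theorem~\ref{h-pf}, in case~(i) one extracts a norm-convergent subsequence $y_\nu\to\hat y\neq\bar x$ using compactness of $\Omega$, applies the last $K_\nu$-property to the convergent (hence bounded) sequence $f(\bar x)-f(y_\nu)\in K_\nu$ to get $f(\bar x)-f(\hat y)\in\mathbb{R}^p_\geqq$, and notes that $\Vert f(\hat y)-f(\bar x)\Vert\geq L\Vert\hat y-\bar x\Vert>0$ forces $f(\hat y)\leq f(\bar x)$, contradicting efficiency; and in case~(ii) one replaces $y_\nu$ by a convex combination $z_\nu:=\bar x+\frac{r}{\nu\Vert y_\nu-\bar x\Vert}(y_\nu-\bar x)\to\bar x$ (with $r>0$, $\Vert y_\nu-\bar x\Vert>r$), which by local convexity still satisfies $f(z_\nu)-f(\bar x)\in-K_\nu\setminus\{0\}$ and now also the Lipschitz bound $\Vert f(z_\nu)-f(\bar x)\Vert\leq L\Vert z_\nu-\bar x\Vert$, reducing this case to the first alternative. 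Hence $\bar x$ is efficient with respect to $K_\nu$ for some $\nu$, and since $\mathbb{R}^p_\geq\setminus\{0\}\subseteq int\,K_\nu$, it is a Henig properly efficient solution of~(\ref{prb}). I expect the main obstacle to be the covering claim --- specifically, legitimizing the limit passage in the defining inequality of $\Gamma$ along a merely weakly convergent sequence of normalized feasible directions; once reflexivity supplies such a subsequence with limit in $T_w$, condition~(\ref{eq-h-pi}) closes the argument precisely as (\ref{eq1-h-pf}) does in the finite-dimensional Theorem~\ref{h-pf}.
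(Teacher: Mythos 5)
Your proposal is correct and is essentially the paper's own argument: the paper proves Theorem \ref{h-pi} by exactly this adaptation of the proof of Theorem \ref{h-pf}, replacing norm-convergent subsequences of the bounded normalized directions by weakly convergent ones via reflexivity (Eberlein--\v{S}mulian) together with CQ1, so that the limit direction lands in $T_w$ and (\ref{eq-h-pi}) yields the contradiction. Your detailed write-out of the covering claim and the two terminal cases matches the intended proof step for step.
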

%------------------
\begin{proof}
The proof is similar to that of Theorem \ref{h-pf} by using the fact that every bounded sequence in a reflexive Banach space
has a weakly convergent subsequence as a corollary of Eberlein-$\check{\text{S}}$mulian Theorem \cite[Theorem 13.1]{con} together with CQ1.
\end{proof}

%-----------------------------------------------------------------------------------

We close the section by providing some connections between isolated efficiency and highly robust efficiency.
Theorem \ref{i-h} shows that isolated efficiency is sufficient for highly robustness under boundedness  of the uncertainty sets.

%-----------------------------------------------------------------------------------
\begin{theorem}\label{i-h}
Assume that there exists scalar $L>0$ such that $\Vert u_i\Vert < L$ for each $u_i\in\mathcal{U}_i,~i \in I$.
Then, each isolated efficient solution of $(\ref{prb})$ with constant $L$ is a highly robust strictly efficient solution of $(P_\mathcal{U})$.
\end{theorem}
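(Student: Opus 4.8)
The plan is to argue by contradiction, exploiting the quantitative linear-growth inequality built into the definition of isolated efficiency. Suppose $\bar{x}\in\Omega$ is an isolated efficient solution of (\ref{prb}) with the same constant $L$ that bounds the uncertainty sets, yet $\bar{x}$ fails to be a highly robust strictly efficient solution of $(P_\mathcal{U})$. Then, by Definition \ref{hrob}, there exist a scenario $u=(u_1,\ldots,u_p)\in\mathcal{U}$ and a point $x\in\Omega\setminus\{\bar{x}\}$ with $f_i(x,u_i)\leq f_i(\bar{x},u_i)$ for every $i\in I$. Unfolding $f_i(x,u_i)=f_i(x)-\langle u_i,x\rangle$, this reads
$$f_i(x)-f_i(\bar{x})\leq \langle u_i,x-\bar{x}\rangle,\qquad \forall i\in I.$$

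Next I would take the maximum over $i\in I$ on both sides and estimate the right-hand side. Since $x\neq\bar{x}$ we have $\Vert x-\bar{x}\Vert>0$, and the definition of the norm on $X^*$ (canonical pairing bound) gives $\langle u_i,x-\bar{x}\rangle\leq \Vert u_i\Vert\,\Vert x-\bar{x}\Vert < L\Vert x-\bar{x}\Vert$ for each $i$, using the hypothesis $\Vert u_i\Vert<L$. Hence $\max_{i\in I}\{f_i(x)-f_i(\bar{x})\}<L\Vert x-\bar{x}\Vert$. On the other hand, isolated efficiency of $\bar{x}$ with constant $L$, applied to this $x\in\Omega$, yields $\max_{i\in I}\{f_i(x)-f_i(\bar{x})\}\geq L\Vert x-\bar{x}\Vert$, a contradiction. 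Therefore no such scenario and point exist, so $\bar{x}$ is a highly robust strictly efficient solution of $(P_\mathcal{U})$.

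There is no serious obstacle here: the statement is essentially a direct consequence of pairing the linear-growth inequality defining isolated efficiency against the bound $\langle u_i,d\rangle\leq\Vert u_i\Vert\,\Vert d\Vert$, with the strict inequality $\Vert u_i\Vert<L$ supplying exactly the slack needed. The only point to watch is that this strictness is essential: if one merely had $\Vert u_i\Vert\leq L$, the estimate would stall at equality and fail to contradict the non-strict inequality $\geq L\Vert x-\bar{x}\Vert$, which is why the theorem is stated with a strict bound on the uncertainty sets.
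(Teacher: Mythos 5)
Your proof is correct and follows essentially the same argument as the paper: assume the conclusion fails, unfold $f_i(x,u_i)\leq f_i(\bar{x},u_i)$ into $f_i(x)-f_i(\bar{x})\leq\langle u_i,x-\bar{x}\rangle$, bound the right side by $\Vert u_i\Vert\,\Vert x-\bar{x}\Vert<L\Vert x-\bar{x}\Vert$, and contradict the isolated efficiency inequality. Your closing remark on why the strict bound $\Vert u_i\Vert<L$ is essential is accurate as well.
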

%------------------
\begin{proof}
Let $\bar{x}\in\Omega$ be an isolated efficient solution of (\ref{prb}) with constant $L$ and it is not highly robust strictly efficient.
Then, there exist $\hat{x}\in\Omega\setminus \{\bar{x}\}$ and $\hat{u}\in\mathcal{U}$ such that
$f(\hat{x},\hat{u})\leqq f(\bar{x},\hat{u})$. This
implies
$$\begin{array}{l}
%%M%%&f_i(\hat{x}) - \langle\hat{u}_i, \hat{x}\rangle \leq f_i(\bar{x}) - \langle\hat{u}_i, \bar{x}\rangle,~~\forall i\in I\vspace*{2mm}\\
%%M%%\Longrightarrow &
f_i(\hat{x}) - f_i(\bar{x}) \leq \langle\hat{u}_i, \hat{x} - \bar{x}\rangle,~~\forall i\in I.
\end{array}$$
Thus,
$$\displaystyle\max_{i\in I} \big\{f_i(\hat{x}) - f_i(\bar{x})\big\} \leq \displaystyle\max_{i\in I} \big\{\langle\hat{u}_i, \hat{x} - \bar{x}\rangle\big\}\vspace*{2mm}\\
\leq \displaystyle\max_{i\in I} \big\{\Vert \hat{u}_i\Vert \Vert \hat{x} - \bar{x}\Vert \big\}\vspace*{2mm}\\
< L\Vert \hat{x} - \bar{x}\Vert.$$
%%%
%%%$$\begin{array}{ll}
%%%\displaystyle\max_{i\in I} \big\{f_i(\hat{x}) - f_i(\bar{x})\big\} &\leq \displaystyle\max_{i\in I} \big\{\langle\hat{u}_i, \hat{x} - \bar{x}\rangle\big\}\vspace*{2mm}\\
%%%&\leq \displaystyle\max_{i\in I} \big\{\Vert \hat{u}_i\Vert \Vert \hat{x} - \bar{x}\Vert \big\}\vspace*{2mm}\\
%%%&< L\Vert \hat{x} - \bar{x}\Vert.
%%%\end{array}$$
This makes a contradiction with isolated efficiency of $\bar{x}$ and completes the proof.
\end{proof}
%-----------------------------------------------------------------------------------

Theorem \ref{h-i} provides a converse version of Theorem \ref{i-h}.

%-----------------------------------------------------------------------------------
\begin{theorem}\label{h-i}
Let $\bar{x}\in\Omega$ be a highly robust weakly efficient solution of $(P_\mathcal{U})$.
Let there exist some scalar $L>0$ such that $\mathcal{A}_L(x)\cap \mathcal{U}_i\neq\emptyset$ for each $x\in\Omega$ and for
each $i\in I$, where $\mathcal{A}_L(x)$ is the set of all vectors $x^*\in X^*$
satisfying $\langle x^*,x-\bar{x}\rangle = L\Vert x-\bar{x}\Vert$.
Then, $\bar{x}$ is an isolated efficient solution of $(\ref{prb})$ with constant $L$.
\end{theorem}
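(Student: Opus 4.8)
The plan is to argue by contradiction, mirroring the structure of the proof of Theorem~\ref{i-h} but reversing the roles of the hypothesis and conclusion. Suppose $\bar{x}$ is not an isolated efficient solution of $(\ref{prb})$ with constant $L$. Then there exists $\hat{x}\in\Omega$ such that $\max_{i\in I}\{f_i(\hat{x})-f_i(\bar{x})\}<L\Vert \hat{x}-\bar{x}\Vert$; note that necessarily $\hat{x}\neq\bar{x}$, since the inequality fails trivially at $\bar{x}$ (both sides are zero). In particular, for every $i\in I$ we have $f_i(\hat{x})-f_i(\bar{x})<L\Vert\hat{x}-\bar{x}\Vert$.

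Next I would invoke the hypothesis on the uncertainty sets: by assumption $\mathcal{A}_L(\hat{x})\cap\mathcal{U}_i\neq\emptyset$ for each $i\in I$, so for each $i$ we may pick $\hat{u}_i\in\mathcal{U}_i$ with $\langle\hat{u}_i,\hat{x}-\bar{x}\rangle=L\Vert\hat{x}-\bar{x}\Vert$. Setting $\hat{u}:=(\hat{u}_1,\dots,\hat{u}_p)\in\mathcal{U}$, we then combine the two displays: for each $i\in I$,
$$
f_i(\hat{x})-f_i(\bar{x})<L\Vert\hat{x}-\bar{x}\Vert=\langle\hat{u}_i,\hat{x}-\bar{x}\rangle,
$$
which rearranges to $f_i(\hat{x})-\langle\hat{u}_i,\hat{x}\rangle<f_i(\bar{x})-\langle\hat{u}_i,\bar{x}\rangle$ for all $i\in I$, i.e. $f(\hat{x},\hat{u})<f(\bar{x},\hat{u})$. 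Since $\hat{x}\in\Omega\setminus\{\bar{x}\}$, this says $\bar{x}$ is not a weakly efficient solution of $(P_{\hat{u}})$, contradicting the assumption that $\bar{x}$ is highly robust weakly efficient. Hence $\bar{x}$ is an isolated efficient solution of $(\ref{prb})$ with constant $L$.

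This argument is almost entirely routine; the only point requiring care is the strictness of the inequality. The definition of isolated efficiency that fails gives a \emph{strict} inequality $\max_i\{f_i(\hat{x})-f_i(\bar{x})\}<L\Vert\hat{x}-\bar{x}\Vert$ (the negation of ``$\geq$ for all $x$''), and it is precisely this strictness, together with the \emph{equality} $\langle\hat{u}_i,\hat{x}-\bar{x}\rangle=L\Vert\hat{x}-\bar{x}\Vert$ built into the definition of $\mathcal{A}_L$, that produces the strict inequalities $f(\hat{x},\hat{u})<f(\bar{x},\hat{u})$ needed to contradict \emph{weak} efficiency rather than merely efficiency. I would therefore state the negation carefully and emphasize that no Lipschitz or convexity assumption on $f$ is needed here, only the interplay between these two (in)equalities. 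No genuine obstacle is anticipated.
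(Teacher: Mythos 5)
Your proof is correct and follows essentially the same contradiction argument as the paper: negate the isolated-efficiency inequality at some $\hat{x}\in\Omega\setminus\{\bar{x}\}$, select $\hat{u}_i\in\mathcal{A}_L(\hat{x})\cap\mathcal{U}_i$, and use the equality $\langle\hat{u}_i,\hat{x}-\bar{x}\rangle=L\Vert\hat{x}-\bar{x}\Vert$ to obtain $f(\hat{x},\hat{u})<f(\bar{x},\hat{u})$, contradicting highly robust weak efficiency. The paper additionally shows beforehand (via a Hahn--Banach extension and Proposition \ref{hrwe-se}) that $\bar{x}$ is an efficient solution of (\ref{prb}), but since the paper's definition of isolated efficiency is exactly the growth inequality you verify, omitting that preliminary step loses nothing.
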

%------------------
\begin{proof}
Consider $x\in\Omega\setminus \{\bar{x}\}$ and $k > 0$ arbitrary and constant hereafter.
Define the one-dimensional subspace $M$ of $X$ as $M:=\big\{\alpha (x - \bar{x}) :~ \alpha\in \mathbb{R}\big\}$ and
the linear functional $h:M\longrightarrow \mathbb{R}$ as $h(\alpha(x - \bar{x})) := k\alpha\Vert x - \bar{x}\Vert$.
Obviously, $h$ is continuous on $M$ and so, by a consequence of Hahn-Banach theorem \cite[Corollary 6.5]{con},
there exists a continuous linear functional $x^*\in X^*$ such that $\langle x^*, x - \bar{x}\rangle =k\Vert x - \bar{x}\Vert$.
Therefore, $\mathcal{A}_k(x)\neq\emptyset$, for each $x\in\Omega\setminus \{\bar{x}\}$ and $k>0$.
So, noting the assumption $\mathcal{A}_L(x)\cap \mathcal{U}_i\neq\emptyset$, for each $x\neq \bar x$, the vector $\bar{x}$ is an efficient solution of (\ref{prb}) due to Proposition \ref{hrwe-se}.
Now, to prove isolated efficiency of $\bar{x}$ with constant $L$. Assume on the contrary that, there exists some $\hat{x}\in\Omega\setminus \{\bar{x}\}$ such that
$$\displaystyle\max_{i\in I} \big\{f_i(\hat{x})-f_i(\bar{x})\big\} < L\Vert \hat{x} - \bar{x}\Vert.$$
Select $x^*_i \in \mathcal{A}_L(\hat{x})\cap \mathcal{U}_i,~i\in I,$ and set $\hat{u}:=(x^*_1, x^*_2,\ldots, x^*_p)$. We get
$$\begin{array}{ll}
&f_i(\hat{x}) - f_i(\bar{x}) < L\Vert \hat{x} - \bar{x}\Vert,~~\forall i \in I\Longrightarrow f_i(\hat{x}) - f_i(\bar{x}) < \langle x^*_i , \hat{x} - \bar{x}\rangle,~~\forall i \in I\vspace*{1mm}\\
\Longrightarrow &f_i(\hat{x}) - \langle x^*_i , \hat{x}\rangle < f_i(\bar{x}) - \langle x^*_i , \bar{x}\rangle,~~\forall i \in I
\Longrightarrow f(\hat{x}, \hat{u}) < f(\bar{x}, \hat{u}).
\end{array}$$
This contradicts highly robust weak efficiency of $\bar{x}$.
So, $\bar{x}$ is an isolated efficient solution of (\ref{prb}) with constant $L$ and the proof is complete.
\end{proof}
%-----------------------------------------------------------------------------------

%============================================================
%============================================================

\section{Characterization of Highly Robustness}\label{section4}

This section is devoted to providing some optimality conditions to characterizing highly robust solutions.
In details, we exploit variational analysis tools, including the limiting subdifferential, a nonsmooth version of Fermat's rule, the mean
value theorem, and weak contingent cone, to establish necessary and sufficient conditions for
(local) highly robust solutions of $(P_\mathcal{U})$.

A Banach space $X$ is called weakly compactly generated (WCG) if
there exists a weakly compact set $K \subseteq X$ such that $X = cl (span\,K)$.
WCG Banach spaces build a large class of Banach spaces, including reflexive Banach spaces and separable Banach spaces; see \cite{mor-b1}.

Theorem \ref{nec1} provides necessary conditions for local highly robustness.

%-----------------------------------------------------------------------------------
\begin{theorem}\label{nec1}
Let $X$ be a WCG Asplund space.
Let $\bar{x}\in\Omega$ be a local highly robust weakly efficient solution of $(P_\mathcal{U})$ and the functions $f_i,~i\in I$, be locally Lipschitz at $\bar{x}$.
Then, there exist no $d\in T_w(\bar{x};\Omega)$ and $u\in\mathcal{U}$ such that
$$\langle x^*-u_i, d\rangle <0,~~\forall x^*\in \partial f_i(\bar{x}),~\forall i\in I.$$
\end{theorem}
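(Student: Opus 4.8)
The plan is to argue by contradiction, translating the claimed nonexistence into a first-order descent condition along a feasible direction and contradicting local highly robust weak efficiency. Suppose, contrary to the conclusion, that there exist $d\in T_w(\bar{x};\Omega)$ and $u\in\mathcal{U}$ such that $\langle x^*-u_i,d\rangle<0$ for every $x^*\in\partial f_i(\bar{x})$ and every $i\in I$. Since $f_i$ is locally Lipschitz at $\bar{x}$, its limiting subdifferential $\partial f_i(\bar{x})$ is nonempty and weak$^*$ compact in the Asplund setting (by \cite[Theorem 3.57]{mor-b1} together with \eqref{eqs}); moreover, in a WCG Asplund space the subdifferential scalarization/max-formula applies, so the generalized directional behaviour of $f_i$ in the direction $d$ is controlled by $\max_{x^*\in\partial f_i(\bar{x})}\langle x^*,d\rangle$. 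Hence our hypothesis gives, for each $i\in I$, a strict inequality
$$\max_{x^*\in\partial f_i(\bar{x})}\langle x^*,d\rangle<\langle u_i,d\rangle,$$
i.e. the direction $d$ is a direction of strict first-order decrease for the perturbed function $f_i(\cdot)-\langle u_i,\cdot\rangle=f_i(\cdot,u_i)$ at $\bar{x}$.

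Next I would promote this infinitesimal decrease to an actual decrease along a sequence of feasible points. Because $d\in T_w(\bar{x};\Omega)$, there are $\{x_\nu\}_\nu\subseteq\Omega$ and $t_\nu\downarrow0$ with $(x_\nu-\bar{x})/t_\nu\overset{w}{\longrightarrow}d$. Writing $x_\nu=\bar{x}+t_\nu d_\nu$ with $d_\nu\overset{w}{\longrightarrow}d$, I would estimate $f_i(x_\nu,u_i)-f_i(\bar{x},u_i)$ from above. The linear part contributes $t_\nu\langle u_i,d_\nu\rangle\to t_\nu\langle u_i,d\rangle$ in the appropriate sense; for the nonsmooth part $f_i(\bar{x}+t_\nu d_\nu)-f_i(\bar{x})$ one uses the mean value theorem for the limiting subdifferential (Mordukhovich's approximate mean value inequality, valid on Asplund spaces) to bound it by roughly $t_\nu\sup\{\langle x^*_\nu,d_\nu\rangle\}$ with $x^*_\nu$ taken from $\partial f_i$ at points near $\bar{x}$; passing to the limit and using outer semicontinuity of $\partial f_i$ together with weak$^*$ compactness of $\partial f_i(\bar{x})$ gives a limsup bounded by $\max_{x^*\in\partial f_i(\bar{x})}\langle x^*,d\rangle$. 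Combining the two contributions,
$$\limsup_{\nu\to\infty}\frac{f_i(x_\nu,u_i)-f_i(\bar{x},u_i)}{t_\nu}\le\max_{x^*\in\partial f_i(\bar{x})}\langle x^*,d\rangle-\langle u_i,d\rangle<0,$$
so for all large $\nu$ one has $f_i(x_\nu,u_i)<f_i(\bar{x},u_i)$ simultaneously for every $i\in I$. Since $x_\nu\in\Omega$ and $x_\nu\to\bar{x}$, for $\nu$ large enough $x_\nu$ lies in any prescribed neighbourhood of $\bar{x}$, whence $f(x_\nu,u)<f(\bar{x},u)$ contradicts the local highly robust weak efficiency of $\bar{x}$ for the scenario $u\in\mathcal{U}$ (via Definition \ref{hrob}).

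The main obstacle is the second step: controlling the nonsmooth increment $f_i(\bar{x}+t_\nu d_\nu)-f_i(\bar{x})$ along a merely weakly convergent sequence $d_\nu\overset{w}{\longrightarrow}d$, rather than a norm-convergent one, and doing so with a limsup that is governed by the \emph{limiting} subdifferential at $\bar{x}$ itself (not by Clarke's directional derivative, which would only give a weaker inequality). This is exactly where the WCG Asplund hypothesis is needed — to guarantee both the validity of the subdifferential mean value estimate and the requisite outer semicontinuity/scalarization so that the weak limit of the subgradients involved stays inside $\partial f_i(\bar{x})$. Once that uniform-in-$i$ estimate is in place, the contradiction with Definition \ref{hrob} is immediate, and a finite intersection of neighbourhoods handles the ``local'' qualifier since $I$ is finite.
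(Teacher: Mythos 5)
Your proposal is correct and follows essentially the same route as the paper's own proof: argue by contradiction, take sequences $x_\nu=\bar{x}+t_\nu d_\nu$ from the definition of $T_w(\bar{x};\Omega)$, bound $f_i(x_\nu)-f_i(\bar{x})$ via the Mordukhovich mean value inequality, use the uniform bound $\Vert x^*_{i\nu}\Vert\le L_i$ with weak$^*$ sequential compactness, and invoke the robustness (weak$^*$ closedness) of $\partial f_i$ in WCG Asplund spaces so the limiting subgradient stays in $\partial f_i(\bar{x})$, which yields $f(x_\nu,u)<f(\bar{x},u)$ for large $\nu$ and contradicts local highly robust weak efficiency. The point you single out as the main obstacle (passing to the limit along the merely weakly convergent directions $d_\nu$) is handled in the paper exactly as you indicate, via the WCG hypothesis and \cite[Theorem 3.60]{mor-b1}, so no substantive difference remains.
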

%------------------
\begin{proof}
Let $f_i,~i\in I$, be locally Lipschitz at $\bar{x}$ with constant $L_i > 0$.
By indirect proof, assume there exists $d\in T_w(\bar{x};\Omega)$ and $u^d\in\mathcal{U}$ such that
\begin{equation}\label{eq1-nec1}
\langle x^*-u_i^d, d\rangle <0 ,~~\forall x^*\in \partial f_i(\bar{x}),~\forall i\in I.
\end{equation}
As $d\in T_w(\bar{x};\Omega)$, there are two sequences $\{x_\nu\}_\nu \subseteq \Omega$
and $\{t_\nu\}_\nu \subseteq \mathbb{R}_>$ satisfying
$x_\nu \longrightarrow\bar{x}$ and $\frac{x_\nu - \bar{x}}{t_\nu}\overset{w}{\longrightarrow} d.$
By applying the mean value theorem \cite[Corollary 3.51]{mor-b1} to the function $f_i$, $i\in I$, there are two vectors $y_{i\nu}\in (\bar{x}, x_\nu)$ and $x^*_{i\nu}\in \partial f_i(y_{i\nu})$ such that
\begin{equation}\label{eq2-nec1}
f_i(x_\nu) - f_i(\bar{x}) \leq\langle x^*_{i\nu}, x_\nu -\bar{x}\rangle.
\end{equation}
Clearly, $y_{i\nu}\longrightarrow \bar{x}$ as $\nu\to\infty$ , for each $i\in I$.
Furthermore, by (\ref{eqs}), $\Vert x^*_{i\nu}\Vert\leq L_i$ for each $\nu\in\mathbb{N}$. Invoking the weak$^*$ sequential compactness property of the bounded sets in Asplund spaces,
by working with a subsequence if necessary, $x^*_{i\nu} \overset{w}{\longrightarrow} \bar{x}^*_i$ for some $\bar{x}^*_i \in X^*$.
On the other side, on account of weak$^*$ closedness property of the set-valued function $\partial f_i:X\rightrightarrows X^*$
in WCG Asplund spaces, as a result of \cite[Theorem 3.60]{mor-b1},
$\bar{x}^*_i\in \partial f_i(\bar{x})$.
Therefore, it stems from (\ref{eq1-nec1}) that $\langle \bar{x}^*_i, d\rangle < \langle u^d_i, d\rangle$ for each $i\in I.$
This gives us that for sufficiently large $\nu$,
\begin{equation}\label{eq3-nec1}
\begin{array}{ll}
%%%&\langle \bar{x}^*_i, \frac{x_\nu - \bar{x}}{t_\nu}\rangle < \langle u^d_i, \frac{x_\nu - \bar{x}}{t_\nu}\rangle ,~~\forall i\in I\vspace*{2mm}\\
 &\langle \bar{x}^*_i, x_\nu - \bar{x}\rangle < \langle u^d_i, x_\nu - \bar{x}\rangle ,~~\forall i\in I.
\end{array}
\end{equation}
Combining (\ref{eq2-nec1}) with (\ref{eq3-nec1}), we get
%%M%%$$\begin{array}{l}
%%M%%&f_i(x_\nu) - f_i(\bar{x}) < \langle u^d_i, x_\nu - \bar{x}\rangle,~~\forall i\in I\vspace*{1mm}\\
%%M%%\Longrightarrow &
$f_i(x_\nu) - \langle u^d_i, x_\nu \rangle < f_i(\bar{x}) -\langle u^d_i, \bar{x}\rangle,$ for all $i\in I$, leading in $f(x_\nu, u^d) < f(\bar{x}, u^d).$
%%M%%\end{array}$$
This contradicts with local highly robust weak efficiency of $\bar{x}$ and completes the proof.
\end{proof}
%-----------------------------------------------------------------------------------

Let us illustrate the necessary condition obtained in Theorem \ref{nec1} by the following example.

%-----------------------------------------------------------------------------------
\begin{example}\label{ex1-nec1}
Consider an uncertain multi-objective optimization problem $(P_\mathcal{U})$
with $f_i:\mathbb{R}^2\longrightarrow\mathbb{R},~i=1, 2$, given by
$$f_1(x_1, x_2) := \vert x_1 + 1\vert + x_2,~~~~~ f_2(x_1, x_2) := x_1 + \vert x_2 + 1\vert,$$
$\Omega := [-1, 1]\times [-1, 1]\subseteq \mathbb{R}^2$, and
$\mathcal{U}_1 = \mathcal{U}_2 := [-1, 0]\times [-1, 0] \subseteq \mathbb{R}^2$.
It can be seen that $\bar{x}=(-1, -1)$ is a highly robust efficient solution of $(P_\mathcal{U})$.
Furthermore, here, $T_w(\bar{x};\Omega) = \mathbb{R}^2_\geqq$,
$\partial f_1(\bar{x}) = [-1, 1]\times \{1\},$ and $\partial f_2(\bar{x}) = \{1\}\times [-1, 1].$
It can be easily seen that the necessary condition derived in Theorem \ref{nec1} holds.
\end{example}
%-----------------------------------------------------------------------------------

To obtain a version of Theorem \ref{nec1} in general real Banach spaces, we use the Clarke's generalized gradient instead of Mordukhovich subdifferential.

%-----------------------------------------------------------------------------------
\begin{theorem}\label{nec2}
Let $X$ be a real Banach space.
If $\bar{x}\in\Omega$ is a local highly robust weakly efficient solution of $(P_\mathcal{U})$
and the functions $f_i,~i\in I$, are locally Lipschitz at $\bar{x}$, then there exist no $d\in T_w(\bar{x};\Omega)$ and $u\in\mathcal{U}$ such that
$$\langle x^*-u_i, d\rangle <0,~~\forall x^*\in \partial^C f_i(\bar{x}),~\forall i\in I.$$
\end{theorem}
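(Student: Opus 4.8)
The plan is to mimic the proof of Theorem~\ref{nec1}, replacing the Mordukhovich subdifferential with Clarke's generalized gradient and, correspondingly, replacing the Mordukhovich-based mean value theorem from \cite{mor-b1} with Lebourg's mean value theorem for locally Lipschitz functions (valid in arbitrary Banach spaces). First I would argue by contradiction: suppose there exist $d\in T_w(\bar{x};\Omega)$ and $u^d\in\mathcal{U}$ with $\langle x^*-u_i^d,d\rangle<0$ for every $x^*\in\partial^C f_i(\bar{x})$ and every $i\in I$. Using the definition of the weak contingent cone, pick sequences $\{x_\nu\}_\nu\subseteq\Omega$ and $\{t_\nu\}_\nu\subseteq\mathbb{R}_>$ with $x_\nu\to\bar{x}$ and $(x_\nu-\bar{x})/t_\nu\overset{w}{\longrightarrow}d$.

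Next, for each $i\in I$ and each $\nu$, apply Lebourg's mean value theorem to $f_i$ on the segment $(\bar{x},x_\nu)$: there exist $y_{i\nu}\in(\bar{x},x_\nu)$ and $x^*_{i\nu}\in\partial^C f_i(y_{i\nu})$ with $f_i(x_\nu)-f_i(\bar{x})=\langle x^*_{i\nu},x_\nu-\bar{x}\rangle$, hence in particular $f_i(x_\nu)-f_i(\bar{x})\leq\langle x^*_{i\nu},x_\nu-\bar{x}\rangle$. Clearly $y_{i\nu}\to\bar{x}$. The step that is genuinely different from Theorem~\ref{nec1} — and the main obstacle — is passing to the limit: in a general Banach space $X^*$ need not have the weak$^*$ sequential compactness of bounded sets (that required the Asplund/WCG structure), so I cannot simply extract a weakly$^*$ convergent subsequence of $\{x^*_{i\nu}\}_\nu$. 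To circumvent this, I would use that Clarke's generalized gradient is robust: $\partial^C f_i$ has closed graph with respect to the norm topology in the domain and the weak$^*$ topology in the range, and moreover $\limsup_{y\to\bar{x}}\partial^C f_i(y)\subseteq\partial^C f_i(\bar{x})$ (upper semicontinuity of $\partial^C$ with respect to the weak$^*$ topology, see \cite{cla-f}). The cleaner route is to avoid limits of the $x^*_{i\nu}$ entirely: since $f_i^\circ(\bar{x};\cdot)$ is the support function of $\partial^C f_i(\bar{x})$ and is upper semicontinuous in $(\bar{x},d)$ jointly, I would instead estimate $f_i(x_\nu)-f_i(\bar{x})$ directly. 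Indeed, by the definition of Clarke's directional derivative and of $T_w$, one obtains for each $i$
\begin{equation*}
\Limsup_{\nu\to\infty}\frac{f_i(x_\nu)-f_i(\bar{x})}{t_\nu}\leq f_i^\circ(\bar{x};d)=\max_{x^*\in\partial^C f_i(\bar{x})}\langle x^*,d\rangle<\langle u_i^d,d\rangle,
\end{equation*}
where the last strict inequality is exactly the contradiction hypothesis (the max is attained because $\partial^C f_i(\bar{x})$ is weak$^*$ compact).

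Then, for all sufficiently large $\nu$, dividing by $t_\nu>0$ and rearranging gives $f_i(x_\nu)-f_i(\bar{x})<\langle u_i^d,x_\nu-\bar{x}\rangle$ for every $i\in I$, i.e. $f_i(x_\nu)-\langle u_i^d,x_\nu\rangle<f_i(\bar{x})-\langle u_i^d,\bar{x}\rangle$ for all $i\in I$, which means $f(x_\nu,u^d)<f(\bar{x},u^d)$ with $x_\nu\in\Omega$ arbitrarily close to $\bar{x}$. This contradicts the local highly robust weak efficiency of $\bar{x}$ and finishes the proof. The only subtlety to be careful about is the justification of the $\Limsup$ inequality from the definition of $f_i^\circ$ when the increments $x_\nu-\bar{x}$ converge to $d$ only weakly after normalization; here I would use that $f_i$ is locally Lipschitz with constant $L_i$, so $|f_i(x_\nu)-f_i(\bar{x})-(f_i^\circ$-type estimate along $d)|$ is controlled by $L_i\Vert (x_\nu-\bar{x})/t_\nu-d'\Vert$ for norm-approximants $d'$ of $d$, combined with the standard Lebourg-type bound — alternatively, simply keep the mean value theorem form $f_i(x_\nu)-f_i(\bar{x})\leq\langle x^*_{i\nu},x_\nu-\bar{x}\rangle$ and split $\langle x^*_{i\nu},x_\nu-\bar{x}\rangle=\langle x^*_{i\nu},t_\nu d\rangle+\langle x^*_{i\nu},x_\nu-\bar{x}-t_\nu d\rangle$, bounding the first term via $f_i^\circ(y_{i\nu};d)\to$ something $\leq f_i^\circ(\bar{x};d)$ by upper semicontinuity of Clarke's directional derivative, and the second via $L_i\Vert x_\nu-\bar{x}-t_\nu d\Vert=o(t_\nu)$ — wait, this last is not automatic since convergence is only weak; so the support-function/upper-semicontinuity argument above is the safe one and should be presented as the core of the proof.
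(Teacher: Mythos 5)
Your route is genuinely different from the paper's, which proves Theorem \ref{nec2} by a one-line reduction: since $\partial f_i(\bar{x})\subseteq \partial^C f_i(\bar{x})$ (cf. (\ref{eqcm})), any pair $(d,u)$ satisfying $\langle x^*-u_i,d\rangle<0$ for all Clarke subgradients satisfies it in particular for all Mordukhovich subgradients, so the conclusion follows at once from Theorem \ref{nec1}. You instead attempt a self-contained argument valid in an arbitrary Banach space (presumably because Theorem \ref{nec1} itself needs the WCG Asplund setting), and that attempt has a genuine gap at the step you yourself single out as ``the safe one''.

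The inequality $\limsup_{\nu}\frac{f_i(x_\nu)-f_i(\bar{x})}{t_\nu}\leq f_i^\circ(\bar{x};d)$ is false when the difference quotients $d_\nu:=(x_\nu-\bar{x})/t_\nu$ converge to $d$ only weakly: the upper semicontinuity of $(x,v)\mapsto f_i^\circ(x;v)$ and the Lipschitz bound $L_i\Vert d_\nu-d\Vert$ both require norm convergence in the direction argument, and under weak convergence the correction term need not vanish. Concretely, take $X=\ell^2$, $f_i(x)=\Vert x\Vert$, $\bar{x}=0$, $t_\nu=1/\nu$, $x_\nu=t_\nu(e_1+e_\nu)$ with $\{e_\nu\}_\nu$ the standard orthonormal basis: then $d_\nu=e_1+e_\nu\overset{w}{\longrightarrow}e_1=:d$, while $\frac{f_i(x_\nu)-f_i(\bar{x})}{t_\nu}=\sqrt{2}>1=f_i^\circ(0;e_1)$. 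Your fallback route (Lebourg's mean value theorem plus a limit of the subgradients $x^*_{i\nu}$) does not repair this: even granting that in a general Banach space one can pass to a weak$^*$ convergent subnet and use the weak$^*$ upper semicontinuity of $\partial^C f_i$ to land in $\partial^C f_i(\bar{x})$, the pairing $\langle x^*_{i\nu},d_\nu\rangle$ of a weak$^*$-convergent (sub)net of functionals with a merely weakly convergent sequence of directions does not pass to the limit, which is exactly the difficulty you flag and then leave unresolved. As written, your argument would only go through if $d$ belonged to the norm contingent cone (i.e.\ $d_\nu\to d$ in norm), not for a general $d\in T_w(\bar{x};\Omega)$; so the proposal is incomplete, whereas the paper's proof is simply the subset reduction to Theorem \ref{nec1}.
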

%------------------
\begin{proof}
%%%The proof is similar to that of \cref{nec1}.
It is derived from Theorem \ref{nec1} due to Eq. (\ref{eqcm}).
\end{proof}
%-----------------------------------------------------------------------------------

In the following example, we show that the converse of Theorem \ref{nec1} may
not be valid, in general, even in a smooth setting.

%-----------------------------------------------------------------------------------
\begin{example}\label{ex2-nec1}
Let $f_i:\mathbb{R}\longrightarrow\mathbb{R},~i=1, 2$, be given as $f_1(x):= x$ and $f_2(x):= -\sqrt[3]{x}$. Furthermore, let $\Omega := (-\infty, 1]$ be the feasible set, and $\mathcal{U} = [-1, 1] \times [-1, 1]$ be the uncertainty set.
Then, $(P_\mathcal{U})$ is formulated by
$$\begin{array}{l}
\min\, (x - u_1x, -\sqrt[3]{x} - u_2 x)~~s.t.~~x \leq 1,
\end{array}$$
where $u_1, u_2 \in [-1, 1]$. Consider $\bar{x} := -1$. We get $T_w(\bar{x};\Omega) = \mathbb{R}$,
$\partial f_1(\bar{x}) = \{1\}$, and $\partial f_2(\bar{x}) = \{\frac{-1}{3}\}$.
The necessary condition provided in Theorem \ref{nec1} is fulfilled while $\bar{x}$ is not a local highly
robust weakly efficient solution of $(P_\mathcal{U})$ (Set $\hat{x} := -1$, $\hat{u}_1 := 0$, and $\hat{u}_2 := -1$).
\end{example}
%-----------------------------------------------------------------------------------

As shown in Example \ref{ex2-nec1}, the converse of Theorem \ref{nec1} may not be true in general.
Therefore, for the purpose of obtaining sufficient conditions, we define a generalized convexity notion as follows.

%-----------------------------------------------------------------------------------
\begin{definition}
We say that $f$ is generalized convex (resp. strictly generalized convex) at $\bar{x}\in \Omega$, on the set $\Omega$, if for each $x \in \Omega$
and each $u\in \mathcal{U}$ there exists $d\in X$ such that
$$\begin{array}{ll}
\langle x^*_i, d\rangle \leq f_i(x,u_i) - f_i(\bar{x},u_i),~~~~\forall x^*_i\in \partial_x f_i(\bar{x},u_i),~\forall i\in I,\vspace{1mm}\\
\big(\text{resp.}~~\langle x^*_i, d\rangle < f_i(x,u_i) - f_i(\bar{x},u_i),~~~~\forall x^*_i\in \partial_x f_i(\bar{x},u_i),~\forall i\in I\big).
\end{array}$$
Furthermore, if the conditional parts occur for each $x\in \Omega\cap\mathcal{B}$ for some neighbourhood $\mathcal{B}$
of $\bar{x}$ instead of $\Omega$, then $f$ is said to be (strictly) generalized \textit{locally} convex at $\bar{x}$.%%% on $\Omega$.
\end{definition}
%-----------------------------------------------------------------------------------

About the above definition, two points are worth saying. (1) If $f_i$'s are
convex on $\Omega$ (resp. locally convex at $\bar{x}$ on $\Omega$), then $f$ is generalized convex at
$\bar{x}$ on $\Omega$ (resp. generalized locally convex at $\bar{x}$ on $\Omega$), due to \cite[Theorem 1.93]{mor-b1}
by setting $d:=x-\bar{x}$ for each $x\in \Omega$ (resp. $x\in \Omega\cap\mathcal{B}$). (2) The function
$$f(x,u):=\left\lbrace\begin{array}{ll}
2\vert x\vert +x^2\sin\frac{1}{x}+ux,~~~&x\neq 0,\vspace*{0.5mm}\\
0,~~~&x=0,
\end{array}\right.$$
is generalized convex at $\bar{x}:= 0$ on $\Omega := [-1, 1]$ with $u\in \mathcal{U}:=[0,1]$
(We have, $0\leq h(x)$ for each $x\in \Omega$ with $d=0$)
while it is not even locally convex at this point
(Consider $u:=0$, $x := \frac{1}{2k\pi +\frac{3\pi}{2}}$, for $k\in\mathbb{N},$ $y := 0$, and $\lambda :=\frac{1}{2}$).
So, the class of generalized (locally) convex functions at a given point on a set is larger than that of (locally) convex
functions.

Now, we are ready to obtain sufficient conditions for highly robustness.
Theorem \ref{suf1} provides a converse version of Theorem \ref{nec1}.

%-----------------------------------------------------------------------------------
\begin{theorem}\label{suf1}
Let $\bar{x}\in\Omega$.
Assume that there exist no $d\in X$ and $u\in \mathcal{U}$ such that
$$\langle x^*-u_i,d\rangle<0,~~~\forall x^*\in \partial f_i(\bar{x}),~~\forall i\in I.$$
\begin{itemize}
\item[i)] If $f$ is generalized locally convex at $\bar{x}$ on $\Omega$, then
$\bar{x}$ is a local highly robust weakly efficient solution of $(P_\mathcal{U})$.
\item[ii)] If $f$ is strictly generalized locally convex at $\bar{x}$ on $\Omega$, then
$\bar{x}$ is a local highly robust strictly efficient solution of $(P_\mathcal{U})$.
\end{itemize}
\end{theorem}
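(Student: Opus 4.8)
The plan is to argue by contradiction in both parts, using the hypothesis together with the (strict) generalized local convexity of $f$ at $\bar{x}$. For part (i), suppose $\bar{x}$ is not a local highly robust weakly efficient solution of $(P_\mathcal{U})$. Then there exist a neighbourhood-independent failure: for every neighbourhood $\mathcal{B}$ of $\bar{x}$ there is some $u\in\mathcal{U}$ for which $\bar{x}$ fails to be a weakly efficient solution of $(P_u)$ restricted to $\Omega\cap\mathcal{B}$. Fixing the neighbourhood $\mathcal{B}$ coming from the definition of generalized local convexity, this yields $\hat{u}\in\mathcal{U}$ and $\hat{x}\in\Omega\cap\mathcal{B}$ with $f_i(\hat{x},\hat{u}_i)<f_i(\bar{x},\hat{u}_i)$ for all $i\in I$, i.e.
$$f_i(\hat{x})-\langle\hat{u}_i,\hat{x}\rangle < f_i(\bar{x})-\langle\hat{u}_i,\bar{x}\rangle,~~\forall i\in I.$$

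Next I would invoke generalized local convexity at $\bar{x}$ applied to the point $\hat{x}$ and the scenario $\hat{u}$: there exists $d\in X$ such that for every $i\in I$ and every $x^*_i\in\partial_x f_i(\bar{x},\hat{u}_i)$,
$$\langle x^*_i,d\rangle \leq f_i(\hat{x},\hat{u}_i)-f_i(\bar{x},\hat{u}_i) < 0.$$
The remaining step is to translate the partial subdifferential $\partial_x f_i(\bar{x},\hat{u}_i)$ of $f_i(\cdot)-\langle\hat{u}_i,\cdot\rangle$ into $\partial f_i(\bar{x})$. Since $x\mapsto\langle\hat{u}_i,x\rangle$ is smooth with derivative $\hat{u}_i$, the sum rule for the limiting subdifferential (e.g. \cite[Proposition 1.107]{mor-b1}, exploiting that a $C^1$ perturbation passes exactly through $\partial$) gives $\partial_x f_i(\bar{x},\hat{u}_i)=\partial f_i(\bar{x})-\hat{u}_i$. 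Hence for every $x^*\in\partial f_i(\bar{x})$ we have $x^*-\hat{u}_i\in\partial_x f_i(\bar{x},\hat{u}_i)$, so $\langle x^*-\hat{u}_i,d\rangle<0$ for all $x^*\in\partial f_i(\bar{x})$ and all $i\in I$. Taking $u:=\hat{u}\in\mathcal{U}$ and this $d$, we contradict the hypothesis of the theorem. Part (ii) proceeds identically: failure of local highly robust strict efficiency produces $\hat{u}$ and $\hat{x}\in\Omega\cap\mathcal{B}$ with $f_i(\hat{x},\hat{u}_i)\leq f_i(\bar{x},\hat{u}_i)$ for all $i$, and strict generalized local convexity yields $\langle x^*_i,d\rangle < f_i(\hat{x},\hat{u}_i)-f_i(\bar{x},\hat{u}_i)\leq 0$, again producing the forbidden $(d,u)$.

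The main obstacle I anticipate is bookkeeping the relation between $\partial_x f_i(\bar{x},\hat{u}_i)$ and $\partial f_i(\bar{x})$ cleanly — one must be careful that the subdifferential sum rule applies with equality here because the perturbing term is continuously differentiable (indeed linear and continuous), so no extra qualification conditions or merely-inclusion subtleties intrude; local Lipschitzness of $f_i$ at $\bar{x}$, if needed, is automatic in the convex case and harmless to assume. A secondary point to state carefully is the exact logical form of "not a local highly robust weakly/strictly efficient solution" so that the neighbourhood $\mathcal{B}$ supplied by generalized local convexity can legitimately be used to extract the dominating $\hat{x}$; this is a matter of matching quantifiers rather than a genuine difficulty.
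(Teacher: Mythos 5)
Your proposal is correct and follows essentially the same route as the paper: obtain a dominating $\hat{x}\in\Omega\cap\mathcal{B}$ and scenario $\hat{u}$ from the failure of local highly robust (weak/strict) efficiency, apply (strict) generalized local convexity to get $\langle x^*_i,d\rangle\leq(<)\,f_i(\hat{x},\hat{u}_i)-f_i(\bar{x},\hat{u}_i)<(\leq)\,0$, and use the exact sum rule for the linear perturbation (\cite[Proposition 1.107]{mor-b1}) to pass from $\partial_x f_i(\bar{x},\hat{u}_i)$ to $\partial f_i(\bar{x})-\hat{u}_i$, contradicting the hypothesis. Your attention to the quantifier structure of the local definition and to the equality form of the sum rule is exactly the bookkeeping the paper's proof implicitly performs.
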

%---------------------
\begin{proof}
(i) By generalized local convexity assumption, there exists some neighbourhoods $\mathcal{B}$ of $\bar{x}$ such that for each $x \in \Omega\cap\mathcal{B}$ and each $u\in \mathcal{U}$ there exists $d\in X$ such that
$$
\langle z^*, d\rangle \leq f_i(x,u_i) - f_i(\bar{x},u_i),~~~\forall z^*\in \partial_x f_i(\bar{x},u_i),~\forall i\in I.
$$
By applying a subdifferential sum rule \cite[Proposition 1.107]{cla-f}, we get
\begin{equation}\label{eq1-suf1}
\langle x^*-u_i, d\rangle \leq f_i(x,u_i) - f_i(\bar{x},u_i),~~~\forall x^*\in \partial f_i(\bar{x}),~\forall i\in I.
\end{equation}
%%%Now, let us justify that $\bar{x}$ is a highly robust weakly efficient solution of $(P_\mathcal{U})$ where feasible set $\Omega$ is restricted to $\Omega\cap \mathcal{B}$.
%%%To achieve this,
If there exist $\hat{x}\in\Omega\cap \mathcal{B}\setminus \{\bar{x}\}$ and $\hat{u}\in\mathcal{U}$ such that $f(\hat{x}, \hat{u}) < f(\bar{x}, \hat{u})$, then, by (\ref{eq1-suf1}), there exists $d\in X$ such that
$$\langle x^*-\hat{u}_i, d\rangle \leq f_i(\hat{x},\hat{u}_i) - f_i(\bar{x},\hat{u}_i)<0,~~~\forall x^*\in \partial f_i(\bar{x}),
~\forall i\in I$$
The last inequality makes a contradiction. The proof of part (i) is complete.\\
(ii) It is proved similar to (i).
\end{proof}
%-----------------------------------------------------------------------------------

%-----------------------------------------------------------------------------------

Analogous to Theorem \ref{suf1}, sufficient conditions for \textit{global} highly robustness can be derived.

%%M%%Analogous to \cref{suf1}, \cref{suf2} gives sufficient conditions for highly robustness in a global sense.

%-----------------------------------------------------------------------------------
%%M%%\begin{theorem}\label{suf2}
%%M%%Let $\bar{x}\in\Omega$.
%%M%%Assume that there exist no $d\in X$ and $u\in\mathcal{U}$ such that
%%M%%$$\langle x^*-u_i,d\rangle<0,~~~\forall x^*\in \partial f_i(\bar{x}),~~\forall i\in I.$$
%%M%%\begin{itemize}
%%M%%\item[i)] If $f$ is generalized convex at $\bar{x}$ on $\Omega$, then
%%M%%$\bar{x}$ is a highly robust weakly efficient solution of $(P_\mathcal{U})$.
%%M%%\item[ii)] If $f$ is strictly generalized convex at $\bar{x}$ on $\Omega$, then
%%M%%$\bar{x}$ is a highly robust strictly efficient solution of $(P_\mathcal{U})$.
%%M%%\end{itemize}
%%M%%\end{theorem}
%-----------------------------------------------------------------------------------

In the continuation of the current section, we obtain necessary and sufficient conditions for highly robust solutions of $(P_\mathcal{U})$
when the robust feasible set is as described in (\ref{rfs}). We need to define some basic concepts.

Consider $j\in J$. Define a pointwise supremum mapping $G_j:X\longrightarrow\overline{\mathbb{R}},~j\in J,$ as
$G_j(x):=\sup_{v_j\in\mathcal{V}_j} g_j(x,v_j),~x\in X.$
Then, $\Omega =\big\{x\in X :~ G_j(x)\leq 0,~\forall j\in J\big\}$.
Given $\varepsilon \geq 0$ and $\bar{x}\in X$, we define the perturbed set of active indices in $\mathcal{V}_j$ at $\bar{x}\in X$
(see \cite{mor-c}) by
$$\mathcal{V}^\varepsilon_j(\bar{x}):=\big\{v_j\in \mathcal{V}_j \bigm| ~g_j(\bar{x},v_j)\geq G_j(\bar{x})-\varepsilon\big\},$$
and in particular, the set of active indices by
$$\mathcal{V}_j(\bar{x}):=\big\{v_j\in \mathcal{V}_j \bigm| ~G_j(\bar{x})=g_j(\bar{x},v_j)\big\}=
\big\{v_j\in \mathcal{V}_j \bigm| ~g_j(\bar{x},v_j)\geq G_j(\bar{x})\big\}=\mathcal{V}^0_j(\bar{x}).$$

Clearly, $\mathcal{V}_j(\bar{x}) \subseteq \mathcal{V}^\varepsilon_j(\bar{x})$. Furthermore, $\mathcal{V}^\varepsilon_j(\bar{x})\neq \emptyset$, while $\mathcal{V}_j(\bar{x})$ could be empty (Consider $g_j(x,v_j):=v_jx^2$ where $x\in \mathbb{R}$, $v_j\in \mathcal{V}_j:=[0,1)$, and $\bar{x}=1$; See \cite{chu-n}).
In \cite{mor-c}, it has been shown that the perturbed sets of active indices, $\mathcal{V}^\varepsilon_j(\bar{x})$, $j\in J$,
are useful tools in calculating the subdifferentials of a nonconvex supremum function.

As usual in optimization, an appropriate CQ is required to derive KKT optimality conditions.
Here, we present and use a CQ in a nonsmooth setting for  $(P_\mathcal{U})$.

\begin{definition}\label{cq}
We say CQ2 is satisfied at $\bar{x}\in \Omega$ if
$$0 \notin cl^*\, co\, (\bigcup_{j \in J} \big\{\partial_x g_j(\bar{x},v_j),~v_j \in \mathcal{V}_j(\bar{x})\big\}).$$
\end{definition}

In fact, CQ2 is a nonsmooth version of the extended Mangasarian-Fromovitz Constraint Qualification (MFCQ); See \cite[Page 72]{bon}.
Now, we are ready to provide necessary conditions for local highly robust weakly efficient solutions of $(P_\mathcal{U})$.

%-----------------------------------------------------------------------------------
\begin{theorem}\label{neckkt}
Let $X$ be an Asplund space and $\bar{x}\in\Omega$. Assume that $f$ is continuous at $\bar x$, and for each $j\in J$, the following assumptions hold:
\begin{itemize}
\item[(i)] The set $\mathcal{V}^{\varepsilon_j}_j(\bar{x})$ is compact for some $\varepsilon_j>0$.
\item[(ii)] For each $x$ around $\bar{x}$, the function $g_j(x,\cdot)$ is
upper semicontinuous on $\mathcal{V}^{\varepsilon_j}_j(\bar{x})$ and
the function $g_j(\cdot,v_j)$ is uniformly locally Lipschitz at $\bar{x}$ with modulus $L_j>0$ for each $v_j\in \mathcal{V}_j$.
\item[(iii)] The set-valued mapping $\partial_{x} g_j$ is weak$^*$ closed at $(\bar{x},\bar{v}_j)$ for each
$\bar{v}_j \in \mathcal{V}_j(\bar{x})$.
\item[(iv)] The functions $f_i$ are Sequentially Normally Epi-Compact (SNEC) at $\bar{x}$ (in the sense of \cite[Definition 1.116]{mor-b1}) for all but one $i\in I$ and
$$\bigg[\sum_{i=1}^p x_i^*= 0,~~ x_i^*\in \partial^\infty f_i(\bar{x})\bigg] \Longrightarrow x_i^* = 0,~~\forall i\in I.$$
\end{itemize}
If $\bar{x}$ is a local highly robust weakly efficient solution of $(P_\mathcal{U})$, then
for each $u\in\mathcal{U}$ there exist $\lambda_i \geq 0,~i\in I,$ and $\mu_j \geq 0,~j\in J,$
with $\sum^p_{i=1} \lambda_i + \sum^q_{j=1} \mu_j =1$, such that
\begin{equation}\label{kkt1}
\sum^p_{i=1} \lambda_i u_i \in \sum^p_{i=1} \lambda_i\circ\partial f_i(\bar{x}) +
\sum^q_{j=1} \mu_j cl^* co\, \big\{ \partial_x g_j(\bar{x},v_j) :~ v_j \in \mathcal{V}_j(\bar{x})\big\},
\end{equation}
\begin{equation}\label{kkt2}
\mu_j \sup_{v_j\in \mathcal{V}_j} g_j(\bar{x},v_j) = 0,~~\forall j\in J,
\end{equation}
where $\lambda_i\circ\partial f_i (\bar{x})$ is defined as $\lambda_i\partial f_i (\bar{x})$ when $\lambda_i > 0$ and as
$\partial^\infty f_i (\bar{x})$ when $\lambda_i =0$, for each $i\in I$.
In addition, if CQ2 holds at $\bar{x}$, then $\lambda_i$'s, $i\in I,$ are not all zero.
\end{theorem}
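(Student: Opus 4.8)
The plan is to fix an arbitrary scenario $u\in\mathcal U$, recast the local highly robust weak efficiency of $\bar x$ at that scenario as an \emph{unconstrained} scalar local minimization, and then run the limiting subdifferential calculus: the generalized Fermat rule, the maximum rule, the sum rule, and the nonconvex supremum‑function rule of \cite{mor-c}. So fix $u\in\mathcal U$. By Definition \ref{hrob}, $\bar x$ is a local weakly efficient solution of $(P_u)$, hence there is a neighbourhood $\mathcal B$ of $\bar x$ admitting no $x\in\Omega\cap\mathcal B$ with $f_i(x)-\langle u_i,x\rangle<f_i(\bar x)-\langle u_i,\bar x\rangle$ for every $i\in I$. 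Writing $c_i:=f_i(\bar x)-\langle u_i,\bar x\rangle$ and recalling $\Omega=\{x\in X: G_j(x)\le 0,\ j\in J\}$, I would introduce
\[
\theta(x):=\max\Big\{f_i(x)-\langle u_i,x\rangle-c_i\ (i\in I),\quad G_j(x)\ (j\in J)\Big\}.
\]
Then $\theta(\bar x)=0$; for $x\in\mathcal B\setminus\Omega$ one has $G_j(x)>0$ for some $j$, so $\theta(x)>0$; and for $x\in\Omega\cap\mathcal B$, weak efficiency gives $\max_{i\in I}\big(f_i(x)-\langle u_i,x\rangle-c_i\big)\ge 0$, so $\theta(x)\ge 0$. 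Thus $\bar x$ is an unconstrained local minimizer of $\theta$, and the generalized Fermat rule yields $0\in\partial\theta(\bar x)$.

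Next I would expand $\partial\theta(\bar x)$ through the limiting maximum and sum rules. By (ii) each $G_j$ is locally Lipschitz around $\bar x$, hence SNEC with $\partial^\infty G_j(\bar x)=\{0\}$; by (i) together with the upper semicontinuity in (ii), $g_j(\bar x,\cdot)$ attains its supremum on the compact set $\mathcal V_j^{\varepsilon_j}(\bar x)$, so $\mathcal V_j(\bar x)$ is a nonempty compact subset of $\mathcal V_j^{\varepsilon_j}(\bar x)$ and $G_j(\bar x)$ is finite. Each component $f_i(\cdot)-\langle u_i,\cdot\rangle-c_i$ differs from $f_i$ by an affine term, hence is SNEC precisely when $f_i$ is and has singular subdifferential $\partial^\infty f_i(\bar x)$ and basic subdifferential $\partial f_i(\bar x)-u_i$. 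Since the indices active at $\bar x$ are all of $I$ together with $\{j\in J: G_j(\bar x)=0\}$, the ``all but one SNEC'' requirement and the singular‑subdifferential qualification of (iv) are exactly what the maximum rule --- followed by the sum rule peeling off the affine terms --- needs. This produces $\lambda_i\ge 0$ $(i\in I)$, $\mu_j\ge 0$ $(j\in J)$ with $\sum_{i\in I}\lambda_i+\sum_{j\in J}\mu_j=1$, with $\mu_j=0$ for every inactive $j$ --- that is $\mu_j G_j(\bar x)=0$ for all $j$, which is (\ref{kkt2}) because $G_j(\bar x)=\sup_{v_j\in\mathcal V_j}g_j(\bar x,v_j)$ --- and
\[
0\in\sum_{i\in I}\big(\lambda_i\circ\partial f_i(\bar x)-\lambda_i u_i\big)+\sum_{j\in J}\mu_j\,\partial G_j(\bar x),
\]
where $-\lambda_i u_i$ is understood as $0$ when $\lambda_i=0$.

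Finally, by (i)--(iii) the nonconvex supremum‑function rule of \cite{mor-c} --- whose weak$^*$‑closedness requirement meshes with Lemma \ref{lem} --- gives $\partial G_j(\bar x)\subseteq cl^*co\,\{\partial_x g_j(\bar x,v_j):\ v_j\in\mathcal V_j(\bar x)\}$; substituting this into the inclusion above and rearranging yields (\ref{kkt1}). If, in addition, CQ2 holds and all $\lambda_i$ vanished, then $\sum_{j\in J}\mu_j=1$ and
\[
0\in\sum_{j\in J}\mu_j\,cl^*co\,\{\partial_x g_j(\bar x,v_j):\ v_j\in\mathcal V_j(\bar x)\}\subseteq cl^*co\,\Big(\bigcup_{j\in J}\{\partial_x g_j(\bar x,v_j):\ v_j\in\mathcal V_j(\bar x)\}\Big),
\]
contradicting Definition \ref{cq}; hence the $\lambda_i$ are not all zero.

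The step I expect to be most delicate is running the maximum/sum calculus legitimately for the merely continuous, possibly non‑Lipschitz $f_i$: one must carry the SNEC property through the affine perturbations, verify that the implication in (iv) is precisely the calculus qualification condition attached to the aggregated maximum $\theta$ (so that both the maximum rule and the ensuing sum rule apply), and confirm that the estimate of \cite{mor-c} is available with the perturbed active‑index sets $\mathcal V_j^{\varepsilon_j}(\bar x)$ uniformly over $j\in J$. The rest --- the construction of $\theta$, the sum rule for the affine terms, and the CQ2 argument --- is routine once these calculus prerequisites are in place.
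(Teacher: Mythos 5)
Your overall route coincides with the paper's: fix $u$, scalarize with the max function (your $\theta$ is exactly the paper's $\mathcal{H}(\cdot,u)$), observe that local highly robust weak efficiency makes $\bar{x}$ an unconstrained local minimizer, apply the generalized Fermat rule, then the maximum rule of \cite[Theorem 3.46]{mor-b1} (for which (iv) suffices, since the locally Lipschitz $G_j$ are SNEC with $\partial^\infty G_j(\bar{x})=\{0\}$), peel off the affine terms by the sum rule, obtain complementarity $\mu_j G_j(\bar{x})=0$, and finish the CQ2 part by the same contradiction. All of that is correct and matches the paper step for step.

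The gap is the step you dispose of by citing \cite{mor-c}: the inclusion $\partial G_j(\bar{x})\subseteq cl^*co\,\{\partial_x g_j(\bar{x},v_j):\, v_j\in\mathcal{V}_j(\bar{x})\}$ is not available off the shelf under hypotheses (i)--(iii) in an Asplund space; indeed the paper's remark notes that (iii) is an extension of a condition known only in finite dimensions, and the known estimates for nonconvex supremum functions are typically stated through the $\varepsilon$-active sets $\mathcal{V}_j^{\varepsilon}(\bar{x})$ (with enlargements and closures), not through the exact active set $\mathcal{V}_j(\bar{x})$ as needed here. This inclusion is in fact the bulk of the paper's proof: one first shows the stability estimate $\mathcal{V}_j^{t}(x)\subseteq\mathcal{V}_j^{\varepsilon_j}(\bar{x})$ for $x$ near $\bar{x}$ and small $t$, then proves that for every direction $d$ there is an active index $\bar{v}_j\in\mathcal{V}_j(\bar{x})$ with $G_j^\circ(\bar{x};d)\leq (g_j^{\bar{v}_j})^\circ(\bar{x};d)$ (choosing near-maximizing indices $v_{jn}$, applying the mean value theorem to $g_j^{v_{jn}}$, using the uniform Lipschitz bound, weak$^*$ sequential compactness of bounded sets in Asplund spaces, compactness of $\mathcal{V}_j^{\varepsilon_j}(\bar{x})$ with upper semicontinuity to land in $\mathcal{V}_j(\bar{x})$, and the weak$^*$ closedness (iii) to identify the limit as an element of $\partial_x g_j(\bar{x},\bar{v}_j)$), and finally converts this directional estimate into the subdifferential inclusion by a weak$^*$ separation argument against the support function of $\partial^C_x g_j(\bar{x},v_j)$. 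Without reconstructing this argument (or locating a reference proving precisely this statement under (i)--(iii)), your proof is incomplete at its central point, even though you correctly identified it as the delicate step and correctly listed which hypotheses feed into it.
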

%------------------
\begin{proof}
The mappings $G_j,~j\in J,$ are well-defined and locally Lipschitz at $\bar{x}$
with modulus $L_j,~j\in J$ (According to assumptions (i) and (ii)).
Without loss of generality, let $\mathcal{B}$ be a neighbourhood of $\bar{x}$ such that
\begin{itemize}
\item[(a)] For each $x\in \mathcal{B}$, the function $g_j(x,\cdot)$ is upper semicontinuous on $\mathcal{V}^{\varepsilon_j}_j(\bar{x})$.
\item[(b)] For each $x,y\in \mathcal{B}$ and for each $v_j\in \mathcal{V}_j$,
\begin{equation}\label{eq1-neckkt}
~~~~\vert g_j(x,v_j)-g_j(y,v_j)\vert \leq L_j\Vert x-y \Vert,~~\textmd{and}~~\vert G_j(x)-G_j(y)\vert \leq L_j\Vert x-y \Vert.
\end{equation}
\item[(c)] The vector $\bar{x}$ is a highly robust weakly efficient solution of $(P_\mathcal{U})$ with robust feasible set $\Omega\cap \mathcal{B}$.
\end{itemize}
Consider $j\in J$.
Since $\mathcal{B}$ is a neighbourhood of $\bar{x}$, for $\alpha>0$ there exists $x\in \mathcal{B}$ such that
$\Vert x-\bar{x}\Vert <\alpha$.
To begin with, we prove for each $x \in \mathcal{B}$ with $\Vert x-\bar{x}\Vert <\frac{\varepsilon_j}{2L_j}$ and for each scalar $t\geq 0$ with $t+2L_j \Vert x -\bar{x}\Vert<\varepsilon_j$, we have
\begin{equation}\label{eq2-neckkt}
\mathcal{V}_j^{t}(x) \subseteq \mathcal{V}_j^{\varepsilon_j}(\bar{x}).
\end{equation}
Let $v_j \in \mathcal{V}_j^{t}(x)$. Then $g_j(x,v_j)\geq G_j(x)-t.$
So, by (\ref{eq1-neckkt}), we get
$$\begin{array}{ll}
G_j(\bar{x})-\varepsilon_j &<G_j(\bar{x})-t -2L_j \Vert x -\bar{x}\Vert\leq G_j(x)+L_j \Vert x -\bar{x}\Vert  -t -2L_j \Vert x -\bar{x}\Vert\vspace*{2mm}\\
&= G_j(x)-t -L_j \Vert x -\bar{x}\Vert\leq g_j(x,v_j) -L_j \Vert x -\bar{x}\Vert\leq g_j(\bar{x},v_j).
\end{array}
$$
So, $v_j \in \mathcal{V}_j^{\varepsilon_j}(\bar{x})$ and thus, (\ref{eq2-neckkt}) is verified.

Now, we justify that
\begin{equation}\label{eq3-neckkt}
\forall d\in X, ~\exists \bar{v}_j \in \mathcal{V}_j(\bar{x}); ~G^\circ_j (\bar{x};d) \leq (g^{\bar{v}_j})^\circ_j(\bar{x};d),
\end{equation}
where $g^{v_j}_j(\cdot) = g_j(\cdot,v_j)$.
Assume that $d\in X\setminus \{0\}$ is arbitrary and constant hereafter.
By Clarke’s generalized directional derivative, there exist sequences $x_n \longrightarrow \bar{x}$ and $t_n \downarrow 0$
such that
$$G^\circ_j (\bar{x};d) = \displaystyle\lim_{n\longrightarrow\infty}\frac{G_j(x_n +t_n d) -G_j(x_n)}{t_n}
= \displaystyle\lim_{n\longrightarrow\infty}\frac{G_j(x_n +t_n d) -t_n^2 -G_j(x_n)}{t_n}.$$
Moreover, for each sufficiently large $n$, there exists $v_{jn} \in \mathcal{V}_j$ such that
\begin{equation}\label{eq31-neckkt}
G_j(x_n +t_n d) -t_n^2 \leq g^{v_{jn}}_j(x_n +t_n d)=g_j(x_n +t_n d,v_{jn}),
\end{equation}
(i.e., $v_{jn} \in \mathcal{V}_j^{t^2_n}(x_n +t_n d)$). Hence,
\begin{equation}\label{eq4-neckkt}
\begin{array}{l}
\hspace{-2mm}G^\circ_j (\bar{x};d) \hspace{-1mm}\leq\hspace{-1mm} \displaystyle\lim_{n\longrightarrow\infty}\frac{g_j(x_n +t_n d,v_{jn})-g_j(x_n,v_{jn})}{t_n}= \displaystyle\lim_{n\longrightarrow\infty}\frac{g^{v_{jn}}_j(x_n +t_n d)-g^{v_{jn}}_j(x_n)}{t_n}.
\end{array}
\end{equation}
Utilizing the mean value theorem in Asplund spaces for locally Lipschitz function $g^{v_{jn}}_j$ \cite[Corollary 3.51]{mor-b1},
there are $y_n\in (x_n +t_n d, x_n)$ and $x^*_n\in \partial g^{v_{jn}}_j(y_n)= \partial_x g_j(y_n,v_{jn})$ such that
\begin{equation}\label{eq5-neckkt}
g^{v_{jn}}_j(x_n +t_n d)-g^{v_{jn}}_j(x_n) \leq\langle x^*_n, t_n d\rangle.
\end{equation}
On account of  (\ref{eqs}) and (\ref{eq1-neckkt}), $\Vert x^*_n\Vert \leq L_j$ for all $n\in \mathbb{N}$ sufficiently large; and
since $X$ is an Asplund space, the weak$^*$ sequential compactness property of bounded sets yields $x^*_n \overset{w^*}{\longrightarrow} \bar{x}^*,$ for some
$\bar{x}^*\in X^*$ (without loss of generality, by taking a subsequence if necessary). Due to (\ref{eq2-neckkt}), $\{v_{jn}\}_n \subseteq \mathcal{V}_j^{\varepsilon_j}(\bar{x})$.
So, $v_{jn}$ has a convergent subnet, because of compactness assumption of $\mathcal{V}_j^{\varepsilon_j}(\bar{x})$; See assumption (i).
Without loss of generality, let us assume $v_{jn}\longrightarrow \bar{v}_j$ for some
$\bar{v}_j \in \mathcal{V}_j^{\varepsilon_j}(\bar{x})$ when $n\longrightarrow \infty$.
Therefore, taking superior limit on the both sides of (\ref{eq31-neckkt}) and using properties (a) and (b), we get
$$G_j(\bar{x}) =\displaystyle\limsup_{n\longrightarrow\infty} \big(G_j(x_n +t_n d) -t_n^2\big) \leq
\displaystyle\limsup_{n\longrightarrow\infty} g_j(x_n +t_n d,v_{jn}) \leq g_j(\bar{x},\bar{v}_j).$$
Thus, $\bar{v}_j \in \mathcal{V}_j (\bar{x})$ and $\mathcal{V}_j (\bar{x})\not = \emptyset$.

Now, given that $(y_n,v_{jn}) \longrightarrow (\bar{x},\bar{v}_j)$, $\bar{v}_j \in \mathcal{V}_j (\bar{x})$, $x^*_n \overset{w^*}{\longrightarrow} \bar{x}^*$, and the set-valued mapping $\partial_{x} g_j$ is weak$^*$ closed at $(\bar{x},\bar{v}_j)$ (assumption (iii)),
$\bar{x}^*\in \partial_{x} g_j(\bar{x},\bar{v}_j)=\partial g^{\bar{v}_j}_j(\bar{x})\subseteq \partial^C g^{\bar{v}_j}_j(\bar{x})$.
Combining (\ref{eq4-neckkt}) and (\ref{eq5-neckkt}) justifies (\ref{eq3-neckkt}) as follows:
$$G_j^\circ(\bar{x};d)\leq \lim_{n\longrightarrow\infty} \langle x^*_n ,d\rangle = \langle \bar{x}^* ,d\rangle \leq
(g^{\bar{v}_j}_j)^\circ (\bar{x};d).$$

In what follows, we show
\begin{equation}\label{eq6-neckkt}
\partial G_j(\bar{x}) \subseteq  cl^*co \bigg(\bigcup_{v_j \in \mathcal{V}_j(\bar{x})} \partial_x g_j(\bar{x},v_j)\bigg).
\end{equation}
On the contrary, suppose that $x^* \in \partial G_j(\bar{x})$ while
$x^*\notin cl^*co \bigg(\bigcup_{v_j \in \mathcal{V}_j(\bar{x})} \partial_x g_j(\bar{x},v_j)\bigg).$
Invoking a strict separation theorem \cite[Theorem 3.9]{con}, there exists $\bar{d}\in X\setminus \{0\}$ such that
for each $y^* \in \partial_x^C g_j(\bar{x},v_j)=cl^* co\, \partial_x g_j(\bar{x},v_j)$ and for each $v_j\in \mathcal{V}_j(\bar{x})$, we have $\langle y^* , \bar{d}\rangle < \langle x^* , \bar{d}\rangle.$
Then, according to \cite[Proposition 2.1.2]{cla-o}, for each $v_j\in \mathcal{V}_j(\bar{x})$,
$$(g^{v_j}_j)^\circ (\bar{x};\bar{d}) = \max \big\{\langle y^* , \bar{d}\rangle : ~y^* \in cl^* co\, \partial_x g_j(\bar{x},v_j)\big\}
< \langle x^* , \bar{d}\rangle \leq G_j^\circ(\bar{x};\bar{d}),$$
which contradicts (\ref{eq3-neckkt}).
Hence, Equation (\ref{eq6-neckkt}) is established.

To continue the proof, we consider the uncertain single-objective optimization problem
$(\bar{P}_\mathcal{U}):=(\bar{P}_u : u\in \mathcal{U})$ as
a family of parameterized problems
$$\begin{array}{l}
(\bar{P}_{u}):~~~\min \mathcal{H}(x,u)~~s.t.~~x\in X,
\end{array}
$$
where $\mathcal{H}:X\times\mathcal{U}\longrightarrow \mathbb{R}$ is a parametric real-valued function
defined by
$$\mathcal{H}(x,u):= \max_{i\in I,~j\in J} \{f_i(x,u)-f_i(\bar{x},u),G_j(x)\},~~\textmd{for each}~~(x,u)\in X\times\mathcal{U}.$$
We claim that $\bar{x}$ is a local highly robust optimal solution of $(\bar{P}_\mathcal{U})$ (with feasible solution set $\mathcal{B}$).
By arguing indirectly, assume that there are $\hat{x}\in\mathcal{B}$ and $\hat{u}\in\mathcal{U}$ such that
$\mathcal{H}(\hat{x},\hat{u})<\mathcal{H}(\bar{x},\hat{u})=0.$
Then,
$$\left\lbrace\begin{array}{ll}
f_i(\hat{x},\hat{u})-f_i(\bar{x},\hat{u})<0,~~&\forall i\in I,\vspace*{1mm}\\
G_j(\hat{x})<0,~~&\forall j \in J.
\end{array}\right.
~~\Longrightarrow ~~\left\lbrace\begin{array}{ll}
f(\hat{x},\hat{u})<f(\bar{x},\hat{u}),\vspace*{1mm}\\
\hat{x}\in \Omega\cap\mathcal{B},
\end{array}\right.
$$
which contradicts (c).
So, $\bar{x}$ is a local highly robust optimal solution of $(\bar{P}_\mathcal{U})$.
Now, by applying a nonsmooth version of Fermat's rule \cite[Proposition 1.114]{mor-b1}, we get $0\in \partial_x \mathcal{H}(\bar{x},u)$ for each
$u\in \mathcal{U}$.
So, for each $u\in \mathcal{U}$, there exist $\lambda_i \geq 0,~i\in I,$ and $\mu_j \geq 0,~j\in J,$
with $\sum^p_{i=1} \lambda_i + \sum^q_{j=1} \mu_j =1$, such that
$$ \left\lbrace\begin{array}{l}
0\in \sum^p_{i=1} \lambda_i\circ\partial_x f_i(\bar{x},u) +\sum^q_{j=1} \mu_j \partial G_j(\bar{x}),\vspace*{1mm}\\
\mu_j G_j(\bar{x}),~~\forall j\in J,
\end{array}\right.
$$
by \cite[Theorem 3.46 (i)]{mor-b1} in Asplund spaces and assumption (iv).
Finally, invoking \cite[Proposition 1.107 (ii) and (iii)]{mor-b1} and (\ref{eq6-neckkt}), we get
$$\begin{array}{c}
\displaystyle\sum^p_{i=1}\lambda_i u_i \in \sum^p_{i=1} \lambda_i\circ\partial f_i(\bar{x}) +\sum^q_{j=1} \mu_j cl^*co \big\{ \partial_x g_j(\bar{x},v_j): ~ v_j \in \mathcal{V}_j(\bar{x})\big\},\vspace*{0mm}\\
\mu_j \displaystyle\sup_{v_j\in \mathcal{V}_j} g_j(\bar{x},v_j) = 0,~~\forall j\in J.
\end{array}$$
The proof of the first part of the theorem is complete. To prove the second part, assume CQ2.
By indirect proof, assume that all $\lambda_i$'s, $i\in I$, in Equation (\ref{kkt1}) are zero.
Then, $\sum^q_{j=1} \mu_j =1$ and
$0 \in \sum^q_{j=1} \mu_j cl^*co \big\{ \partial_x g_j(\bar{x},v_j): ~ v_j \in \mathcal{V}_j(\bar{x})\big\},$
which contradicts CQ2.
%Thus, in this situation, $\lambda_i$'s, $i\in I,$ are not all zero and
The proof is complete.
\end{proof}
%-----------------------------------------------------------------------------------

\begin{remark}
There are several points about the assumptions (i)-(iv) in Theorem \ref{neckkt}.
These assumptions have broadly been applied to calculate nonsmooth subdifferentials/subgradients of supremum/max mappings in
nonsmooth analysis; See \cite{chu-o,chu-r,chu-n,mor-b1,mor-c,zhe} and the references therein.
In derails, hypothesises (i) and (ii) guarantee definability and locally Lipschitzness of the functions $G_j,~j\in J$.
%%%Also, weak$^*$ closedness property of subdifferentials, i.e.,
Hypothesises (iii), %%%provided for nonconvex and nonsmooth functions in infinite-dimensional spaces,
is an extension of a condition existing in the literature for finite-dimensional spaces.
This property occurs for a large class of functions, for example, for uniformly sequential (lower) regular functions \cite{chu-n}, subsmooth functions \cite{zhe}, and continuously prox-regular functions
\cite{lev} when (i) and (ii) hold.
Moreover, (iv) ensures that the subdifferential of max mapping is a subset of that of the weighted sum mapping.
Under locally Lipschitzness of all but one of functions $f_i,~i\in I$, (iv) automatically holds due to
\cite[Theorem 1.26 and corollary 1.81]{mor-b1} and as a result of \cite[Theorem 1.26]{mor-b1}; See also \cite[Page 121]{mor-b1}.
\end{remark}
%-----------------------------------------------------------------------------------

The following example illustrates Theorem \ref{neckkt}.

%-----------------------------------------------------------------------------------

\begin{example}\label{ex-neckkt}
Consider the objective functions $f_i,~i=1,2$, and the uncertainty set $\mathcal{U}$ with $\mathcal{U}_1 = \mathcal{U}_2 := [-1, 0]\times [-1, 0] \subseteq \mathbb{R}^2$ (given in Example \ref{ex1-nec1}) as $f_1(x_1, x_2) := \vert x_1 + 1\vert + x_2$ and $f_2(x_1, x_2) := x_1 + \vert x_2 + 1\vert.$
Also, consider the constraint functions $g_j:\mathbb{R}^2\times\mathcal{V}_j \longrightarrow \mathbb{R},~j=1,2,3,$ as
$$\begin{array}{l}
g_1(x,v_1):=x_1\sin(v_1)+x_2\cos(v_1)-1,\vspace*{0.5mm}\\
g_2(x,v_2):=-x_1-v_2,~~~g_3(x,v_3):=-x_2-1+v_3,
\end{array}
$$
where $v_1\in \mathcal{V}_1:=[-\frac{\pi}{2},\pi]$, $v_2\in \mathcal{V}_2:=[1,2)$, and $v_3\in \mathcal{V}_3:=(-1,0]$.
It is not difficult to see that $\bar{x}:=(-1,-1)\in \Omega\subseteq [-1,1]\times [-1,1]$.
So, due to Example \ref{ex1-nec1}, $(-1,-1)$ is a highly robust efficient solution of $(P_\mathcal{U})$.\\
Obviously, the hypothesises (i)-(iv) in Theorem \ref{neckkt} hold.
Now, straightly calculation gives us that $\mathcal{V}_1(\bar{x})=\{-\frac{\pi}{2},\pi\}$, $\mathcal{V}_2(\bar{x})=\{1\}$,
$\mathcal{V}_3(\bar{x})=\{0\}$, and
$$\begin{array}{c}
\partial f_1(\bar{x}) = [-1, 1]\times \{1\},~~~~ \partial f_2(\bar{x}) = \{1\}\times [-1, 1]\vspace*{1mm}\\
\{ \partial_x g_1(\bar{x},v_1): ~ v_1 \in \mathcal{V}_1(\bar{x})\}=\{(-1,0),(0,-1)\},\vspace*{1mm}\\
\{ \partial_x g_2(\bar{x},v_2): ~ v_1 \in \mathcal{V}_2(\bar{x})\}=\{(-1,0)\},~~\{ \partial_x g_3(\bar{x},v_3): ~ v_1 \in \mathcal{V}_3(\bar{x})\}=\{(0,-1)\}.
\end{array}$$
Therefore, for each $u=(u_1,u_2)\in \mathcal{U}=\mathcal{U}_1\times\mathcal{U}_2$, by setting
$\gamma :=5-(u_{11}+u_{12}+u_{21}+u_{22})$, $\lambda_1=\lambda_2=\mu_1:=\frac{1}{\gamma}$,
$\mu_2:=\frac{1-u_{11}-u_{12}}{\gamma}$, and $\mu_3:=\frac{1-u_{21}-u_{22}}{\gamma}$,
(\ref{kkt1}) and (\ref{kkt2}) are satisfied.
\end{example}
%-----------------------------------------------------------------------------------

The following corollary is a direct consequence of Theorem \ref{neckkt}.

%-----------------------------------------------------------------------------------
\begin{corollary}\label{co1-neckkt}
Let $X$ be an Asplund space and $\bar{x}\in\Omega$.
Let assumptions (i)-(iii) hold at $\bar{x}$, for each $j\in J$, and
the functions $f_i$ be locally Lipschitz at $\bar{x}$, for all but one $i\in I$.
If $\bar{x}$ is a local highly robust weakly efficient solution of $(P_\mathcal{U})$, then
for each $u\in\mathcal{U}$ there exist $\lambda_i \geq 0,~i\in I,$ and $\mu_j \geq 0,~j\in J,$
with $\sum^p_{i=1} \lambda_i + \sum^q_{j=1} \mu_j =1$, such that
\begin{equation}\label{kkt3}
\displaystyle\sum^p_{i=1} \lambda_i u_i \in \sum^p_{i=1} \lambda_i\partial f_i(\bar{x}) +
\sum^q_{j=1} \mu_j cl^* co\, \big\{ \partial_x g_j(\bar{x},v_j) :~ v_j \in \mathcal{V}_j(\bar{x})\big\},
\end{equation}
\begin{equation}\label{kkt4}
\displaystyle\mu_j \sup_{v_j\in \mathcal{V}_j} g_j(\bar{x},v_j) = 0,~~\forall j\in J.
\end{equation}
In addition, if CQ2 holds at $\bar{x}$, then $\lambda_i$'s, $i\in I,$ are not all zero.
\end{corollary}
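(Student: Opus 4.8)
The plan is to obtain the statement as an immediate specialization of Theorem \ref{neckkt}; the only work is to check that the hypotheses of that theorem are in force here and then to simplify its conclusion. First I would verify assumption (iv) of Theorem \ref{neckkt}. Since $f_i$ is locally Lipschitz at $\bar{x}$ for all but one $i\in I$, each such $f_i$ is SNEC at $\bar{x}$ and, by (\ref{eqs}), satisfies $\partial^\infty f_i(\bar{x})=\{0\}$; hence the SNEC requirement is fulfilled for the $p-1$ Lipschitz components and the implication $[\sum_{i=1}^p x_i^*= 0,~ x_i^*\in \partial^\infty f_i(\bar{x})] \Longrightarrow x_i^* = 0$ collapses to the trivial assertion about the single remaining index. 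This is exactly the observation recorded in the remark following Theorem \ref{neckkt}, via \cite[Theorem 1.26 and Corollary 1.81]{mor-b1}. With (i)-(iii) assumed directly and the continuity of $f$ at $\bar{x}$ inherited from the local Lipschitz continuity of (all but one of) its components, Theorem \ref{neckkt} then applies and furnishes, for every $u\in\mathcal{U}$, multipliers $\lambda_i\ge 0$ and $\mu_j\ge 0$ with $\sum_{i=1}^p\lambda_i+\sum_{j=1}^q\mu_j=1$ satisfying (\ref{kkt1}) and (\ref{kkt2}).

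The second step is to rewrite the symbol $\lambda_i\circ\partial f_i(\bar{x})$ occurring in (\ref{kkt1}) as $\lambda_i\partial f_i(\bar{x})$. At every index $i$ where $f_i$ is locally Lipschitz, $\partial f_i(\bar{x})\neq\emptyset$ because $X$ is Asplund (by \cite[Theorem 3.57]{mor-b1}) and $\partial^\infty f_i(\bar{x})=\{0\}$ by (\ref{eqs}); therefore $\lambda_i\circ\partial f_i(\bar{x})=\partial^\infty f_i(\bar{x})=\{0\}=\lambda_i\partial f_i(\bar{x})$ when $\lambda_i=0$, and the identity $\lambda_i\circ\partial f_i(\bar{x})=\lambda_i\partial f_i(\bar{x})$ is trivial when $\lambda_i>0$. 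Performing this replacement at each of the $p-1$ Lipschitz indices (and noting that the at most one non-Lipschitz index is covered by the same identity whenever its multiplier is positive) turns (\ref{kkt1}) and (\ref{kkt2}) into (\ref{kkt3}) and (\ref{kkt4}). The closing assertion, that the $\lambda_i$ are not all zero once CQ2 holds at $\bar{x}$, is precisely the second part of Theorem \ref{neckkt}, whose argument uses only CQ2 together with (\ref{kkt1}), so it transfers verbatim.

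I expect the only point needing care to be the bookkeeping for the single component $f_{i_0}$ that is not assumed locally Lipschitz: there $\partial^\infty f_{i_0}(\bar{x})$ may be strictly larger than $\{0\}$, so one should either note that in the situations covered by the corollary its multiplier is strictly positive, or read $\lambda_{i_0}\partial f_{i_0}(\bar{x})$ in (\ref{kkt3}) with the convention inherited from the $\circ$-notation. Everything else, namely the verification of (iv), the collapse of the $\circ$-operation for Lipschitz data, and the transfer of the conclusion under CQ2, is routine once Theorem \ref{neckkt} is in hand.
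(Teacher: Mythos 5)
Your proof is correct and takes essentially the same route as the paper, which gives no separate argument and simply declares the corollary a direct consequence of Theorem \ref{neckkt}: hypothesis (iv) holds automatically under local Lipschitzness of all but one $f_i$ (exactly the point recorded in the remark you cite), and at the Lipschitz indices the $\circ$-notation collapses to $\lambda_i\partial f_i(\bar{x})$ via $\partial^\infty f_i(\bar{x})=\{0\}$ and nonemptiness of $\partial f_i(\bar{x})$ in Asplund spaces. Your closing caveat about the single non-Lipschitz component (how to read $\lambda_{i_0}\partial f_{i_0}(\bar{x})$ when $\lambda_{i_0}=0$, and likewise the continuity of that component) points at an imprecision already present in the paper's own statement rather than a gap in your argument.
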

%------------------
%%M%%\begin{proof}
%%M%%It results from \cref{neckkt} by applying \cite[Theorem 3.46 (ii) and Theorem 3.36]{mor-b1} for locally Lipschitz functions.
%%M%%\qed
%-----------------------------------------------------------------------------------

Corollaries \ref{co2-neckkt} and \ref{co3-neckkt} are two versions of Theorem \ref{neckkt} in general real Banach spaces with Clarke's generalized gradients.

%-----------------------------------------------------------------------------------
\begin{corollary}\label{co2-neckkt}
Let $X$ be a real Banach space and $\bar{x}\in\Omega$.
Let assumptions (i)-(iii) (with Clarke's generalized gradient) hold at $\bar{x}$, for each $j\in J$, and
the functions $f_i$ be locally Lipschitz at $\bar{x}$, for each $i\in I$.
If $\bar{x}$ is a local highly robust weakly efficient solution of $(P_\mathcal{U})$, then
for each $u\in\mathcal{U}$ there exist $\lambda_i \geq 0,~i\in I,$ and $\mu_j \geq 0,~j\in J,$
with $\sum^p_{i=1} \lambda_i + \sum^q_{j=1} \mu_j =1$, such that
$$
\begin{array}{c}
\displaystyle\sum^p_{i=1} \lambda_i u_i \in \sum^p_{i=1} \lambda_i\partial^C f_i(\bar{x}) +
\sum^q_{j=1} \mu_j cl^* co\, \big\{ \partial_x^C g_j(\bar{x},v_j) :~ v_j \in \mathcal{V}_j(\bar{x})\big\},\vspace*{1mm}\\
\displaystyle\mu_j \sup_{v_j\in \mathcal{V}_j} g_j(\bar{x},v_j) = 0,~~\forall j\in J.
\end{array}
$$
In addition, if CQ2 holds at $\bar{x}$, then $\lambda_i$'s, $i\in I,$ are not all zero.
\end{corollary}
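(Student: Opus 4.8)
The plan is to transcribe the proof of Theorem~\ref{neckkt}, replacing the Mordukhovich subdifferential by Clarke's generalized gradient everywhere and using that the Clarke calculus (exact sum rule, maximum rule, Lebourg's mean value theorem, Fermat's rule, weak$^*$ closedness of $\partial^C$) is valid in an arbitrary Banach space. A bonus of the present hypotheses is that each $f_i$ is now locally Lipschitz at $\bar x$, so $\partial^C f_i(\bar x)$ is a nonempty, convex, weak$^*$ compact set; consequently the SNEC requirement and the singular--subdifferential qualification of assumption (iv) in Theorem~\ref{neckkt} are no longer needed, and the ``$\lambda_i\circ$'' device collapses to the plain $\lambda_i\partial^C f_i(\bar x)$.

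First, by assumptions (i)--(ii) the pointwise suprema $G_j$, $j\in J$, are well defined and locally Lipschitz around $\bar x$ with modulus $L_j$, and one fixes a neighbourhood $\mathcal B$ of $\bar x$ on which the analogues of properties (a)--(c) from the proof of Theorem~\ref{neckkt} hold. Next I would reprove, line for line, the inclusion $\mathcal V_j^{t}(x)\subseteq\mathcal V_j^{\varepsilon_j}(\bar x)$ for $x$ near $\bar x$ and $t$ small (the argument of (\ref{eq2-neckkt}) uses only the Lipschitz estimates on $g_j$ and $G_j$), and then the key directional inequality: for every $d\in X$ there is $\bar v_j\in\mathcal V_j(\bar x)$ with $G_j^\circ(\bar x;d)\le (g_j^{\bar v_j})^\circ(\bar x;d)$, exactly as for (\ref{eq3-neckkt}). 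The only modification is that here Lebourg's mean value theorem \cite[Theorem~2.3.7]{cla-o} produces subgradients $x_n^*\in\partial_x^C g_j(y_n,v_{jn})$, and the bounded family $\{x_n^*\}$ admits a weak$^*$ convergent \emph{subnet} by Banach--Alaoglu (in Theorem~\ref{neckkt} one extracts a subsequence using the Asplund property); along a common further subnet $v_{jn}\to\bar v_j$ inside the compact set $\mathcal V_j^{\varepsilon_j}(\bar x)$, the weak$^*$ closedness of $\partial_x^C g_j$ at $(\bar x,\bar v_j)$ (assumption (iii), read with Clarke's gradient) gives $\bar x^*\in\partial_x^C g_j(\bar x,\bar v_j)$, while a convergent numerical net has the same limit along every subnet, so the chain $G_j^\circ(\bar x;d)\le\langle\bar x^*,d\rangle\le (g_j^{\bar v_j})^\circ(\bar x;d)$ is preserved; this also shows $\mathcal V_j(\bar x)\neq\emptyset$. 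A strict separation argument combined with $\varphi^\circ(\bar x;d)=\max\{\langle x^*,d\rangle:x^*\in\partial^C\varphi(\bar x)\}$ then upgrades this to the Clarke analogue of (\ref{eq6-neckkt}), namely $\partial^C G_j(\bar x)\subseteq cl^*co\{\partial_x^C g_j(\bar x,v_j):v_j\in\mathcal V_j(\bar x)\}$.

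With these facts available I would scalarize as in Theorem~\ref{neckkt}: put $\mathcal H(x,u):=\max_{i\in I,\,j\in J}\{f_i(x,u_i)-f_i(\bar x,u_i),\,G_j(x)\}$ and observe that $\bar x$ is a local highly robust optimal solution of $(\bar P_{\mathcal U})$ on $\mathcal B$, since any $\hat x\in\mathcal B$ and $\hat u\in\mathcal U$ with $\mathcal H(\hat x,\hat u)<0=\mathcal H(\bar x,\hat u)$ would give $f(\hat x,\hat u)<f(\bar x,\hat u)$ with $\hat x\in\Omega\cap\mathcal B$, contradicting property (c). Fermat's rule for $\partial^C$ yields $0\in\partial_x^C\mathcal H(\bar x,u)$ for every $u\in\mathcal U$; since $\mathcal H(\bar x,u)=0$, the active components are all the $f$-terms together with those $G_j$ for which $G_j(\bar x)=0$, so the Clarke maximum rule and the exact sum rule $\partial_x^C(f_i(\cdot,u_i))(\bar x)=\partial^C f_i(\bar x)-u_i$ produce $\lambda_i\ge0$ $(i\in I)$ and $\mu_j\ge0$ $(j\in J)$ with $\sum_i\lambda_i+\sum_j\mu_j=1$, with $\mu_j=0$ whenever $G_j(\bar x)<0$, and with $0\in\sum_i\lambda_i(\partial^C f_i(\bar x)-u_i)+\sum_j\mu_j\partial^C G_j(\bar x)$. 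The relation $\mu_j=0$ for inactive $j$ gives $\mu_j\sup_{v_j\in\mathcal V_j}g_j(\bar x,v_j)=\mu_jG_j(\bar x)=0$ for all $j$ (the analogue of (\ref{kkt4})); substituting the inclusion for $\partial^C G_j(\bar x)$ above (only the indices with $\mu_j>0$ contribute, and for those $\mathcal V_j(\bar x)\neq\emptyset$) and rearranging gives $\sum_i\lambda_i u_i\in\sum_i\lambda_i\partial^C f_i(\bar x)+\sum_j\mu_j\,cl^*co\{\partial_x^C g_j(\bar x,v_j):v_j\in\mathcal V_j(\bar x)\}$, i.e. the asserted inclusion (the Clarke version of (\ref{kkt3})). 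Finally, if all $\lambda_i=0$ then $\sum_j\mu_j=1$ and $0\in cl^*co\big(\bigcup_{j\in J}\{\partial_x^C g_j(\bar x,v_j):v_j\in\mathcal V_j(\bar x)\}\big)$, which contradicts CQ2 (understood with $\partial_x^C$ in the present Banach setting); hence the $\lambda_i$ are not all zero.

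The step I expect to be the genuine obstacle is exactly the passage from sequences to nets in establishing $G_j^\circ(\bar x;\cdot)\le (g_j^{\bar v_j})^\circ(\bar x;\cdot)$: the Asplund proof freely extracts a weak$^*$ convergent \emph{subsequence} of the bounded subgradient sequence, whereas here only subnets are at hand, and one must verify that (a) passing to a subnet does not spoil the convergence of the difference quotients defining $G_j^\circ(\bar x;d)$ — it does not, a convergent numerical net having a unique limit — and (b) the uncertainty parameters $v_{jn}$ and the subgradients $x_n^*$ can be made to converge simultaneously along one common subnet, so that assumption (iii) applies. Everything else — local Lipschitzness of the $G_j$, the inclusion $\mathcal V_j^t(x)\subseteq\mathcal V_j^{\varepsilon_j}(\bar x)$, the scalarization, the Clarke maximum and sum rules, complementary slackness, and the CQ2 argument — is a routine transcription of the proofs of Theorem~\ref{neckkt} and Corollary~\ref{co1-neckkt}.
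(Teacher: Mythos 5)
Your proposal is correct and is exactly the route the paper intends: Corollary~\ref{co2-neckkt} is stated without proof as the ``Banach-space/Clarke version'' of Theorem~\ref{neckkt}, and your transcription supplies it, with the genuinely needed modifications identified and handled properly (Lebourg's mean value theorem in place of the Mordukhovich mean value theorem, weak$^*$ convergent subnets via Banach--Alaoglu in place of the Asplund sequential compactness, a common subnet for the $v_{jn}$ and the subgradients so that assumption (iii) applies, and the Clarke maximum/sum rules replacing the Mordukhovich calculus and assumption (iv), which local Lipschitzness of all $f_i$ renders unnecessary). Your reading of CQ2 with $\partial^C_x$ is the consistent one here (with the Mordukhovich form of CQ2 the contradiction argument would not go through, since $\partial_x g_j\subseteq\partial_x^C g_j$), and this matches the corollary's stipulation that assumptions be understood with Clarke's generalized gradient.
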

%------------------
%%M%%\begin{proof}
%%M%%The proof is similar to that of \cref{neckkt}.
%%M%%\qed
%-----------------------------------------------------------------------------------

%-----------------------------------------------------------------------------------
\begin{corollary}\label{co3-neckkt}
Let $X$ be a real Banach space and $\bar{x}\in\Omega$.
For each $j\in J$, let assumptions (i)-(iii) (with Clarke's generalized gradient) hold at $\bar{x}$,
$\mathcal{V}_j$ be convex, $g_j(\cdot,v_j)$ be Clarke regular at $\bar{x}$ for each $v_j \in \mathcal{V}_j$,
and $g_j(x,\cdot)$ be concave on $\mathcal{V}_j$ for $x$ around $\bar{x}$.
Furthermore, suppose that the functions $f_i$ are locally Lipschitz at $\bar{x}$ for each $i\in I$.
If $\bar{x}$ is a local highly robust weakly efficient solution of $(P_\mathcal{U})$, then
for each $u\in\mathcal{U}$ there exist $\lambda_i \geq 0,~i\in I,$ and $\mu_j \geq 0,~j\in J,$
with $\sum^p_{i=1} \lambda_i + \sum^q_{j=1} \mu_j =1$ and $v_j\in\mathcal{V}_j(\bar{x}),~j\in J,$ such that
$$
\displaystyle\sum^p_{i=1} \lambda_i u_i \in \sum^p_{i=1} \lambda_i\partial^C f_i(\bar{x}) +
\sum^q_{j=1} \mu_j \partial_x^C g_j(\bar{x},v_j),~~\textmd{and}~~\mu_j g_j(\bar{x},v_j) = 0,~~\forall j\in J.
$$
In addition, if $0\notin \big\{\partial_x^C g_j(\bar{x},v_j): v_j\in\mathcal{V}_j(\bar{x}),~j\in J\big\}$, then $\lambda_i$'s, $i\in I,$ are not all zero.
\end{corollary}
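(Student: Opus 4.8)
The plan is to deduce Corollary~\ref{co3-neckkt} from Corollary~\ref{co2-neckkt}, the new ingredient being that, under the extra hypotheses imposed here (each $\mathcal{V}_j$ convex, each $g_j(\cdot,v_j)$ Clarke regular at $\bar{x}$, and $g_j(x,\cdot)$ concave on $\mathcal{V}_j$ for $x$ near $\bar{x}$), the weak$^*$-closed convex hull appearing in Corollary~\ref{co2-neckkt} collapses, for every $j\in J$, to the bare union
$$
\mathcal{D}_j:=\bigcup_{v_j\in\mathcal{V}_j(\bar{x})}\partial_x^C g_j(\bar{x},v_j).
$$
Concretely, I would prove that \textbf{(a)} $\mathcal{D}_j$ is convex and \textbf{(b)} $\mathcal{D}_j$ is weak$^*$ closed, whence $cl^*co\,\{\partial_x^C g_j(\bar{x},v_j):v_j\in\mathcal{V}_j(\bar{x})\}=\mathcal{D}_j$. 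Granting (a)--(b), Corollary~\ref{co2-neckkt} furnishes, for every $u\in\mathcal{U}$, multipliers $\lambda_i\geq0$ and $\mu_j\geq0$ with $\sum_{i=1}^{p}\lambda_i+\sum_{j=1}^{q}\mu_j=1$, $\sum_{i=1}^{p}\lambda_iu_i\in\sum_{i=1}^{p}\lambda_i\partial^C f_i(\bar{x})+\sum_{j=1}^{q}\mu_j\mathcal{D}_j$, and $\mu_j\sup_{v_j\in\mathcal{V}_j}g_j(\bar{x},v_j)=0$. Expanding the membership in $\sum_{j=1}^{q}\mu_j\mathcal{D}_j$ selects, for each $j$, one $v_j\in\mathcal{V}_j(\bar{x})$ (this set being nonempty, as in the proof of Theorem~\ref{neckkt}) realizing the $j$-th summand, which is exactly the asserted inclusion; moreover $g_j(\bar{x},v_j)=G_j(\bar{x})=\sup_{v_j\in\mathcal{V}_j}g_j(\bar{x},v_j)$ for such $v_j$, so $\mu_jg_j(\bar{x},v_j)=0$. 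Finally, if all $\lambda_i$ were zero, then $\sum_{j=1}^{q}\mu_j=1$ and $0\in\sum_{j=1}^{q}\mu_j\partial_x^C g_j(\bar{x},v_j)$ would place $0$ in the convex hull of the sets $\partial_x^C g_j(\bar{x},v_j)$ over $j\in J$ and $v_j\in\mathcal{V}_j(\bar{x})$, which the final hypothesis rules out; this reproduces, via (a)--(b), the CQ2 argument of Corollary~\ref{co2-neckkt}.

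For \textbf{(a)}, fix $j\in J$, take $v_j^{1},v_j^{2}\in\mathcal{V}_j(\bar{x})$ and $t\in[0,1]$, and set $v_j^{t}:=tv_j^{1}+(1-t)v_j^{2}\in\mathcal{V}_j$. From concavity of $g_j(\bar{x},\cdot)$ together with $g_j(\bar{x},v_j^{1})=g_j(\bar{x},v_j^{2})=G_j(\bar{x})$ we get $g_j(\bar{x},v_j^{t})\geq G_j(\bar{x})$, while $g_j(\bar{x},v_j^{t})\leq G_j(\bar{x})=\sup_{v\in\mathcal{V}_j}g_j(\bar{x},v)$, so $g_j(\bar{x},v_j^{t})=G_j(\bar{x})$ and hence $v_j^{t}\in\mathcal{V}_j(\bar{x})$. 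For $d\in X$ and all small $s>0$, concavity of $g_j(\bar{x}+sd,\cdot)$ and the equalities $g_j(\bar{x},v_j^{1})=g_j(\bar{x},v_j^{2})=g_j(\bar{x},v_j^{t})=G_j(\bar{x})$ give
$$
\frac{g_j(\bar{x}+sd,v_j^{t})-g_j(\bar{x},v_j^{t})}{s}\geq t\,\frac{g_j(\bar{x}+sd,v_j^{1})-g_j(\bar{x},v_j^{1})}{s}+(1-t)\,\frac{g_j(\bar{x}+sd,v_j^{2})-g_j(\bar{x},v_j^{2})}{s}.
$$
Passing to the limit as $s\downarrow0$ and invoking the Clarke regularity of $g_j(\cdot,v_j^{1})$, $g_j(\cdot,v_j^{2})$ and $g_j(\cdot,v_j^{t})$ at $\bar{x}$ — by which each difference quotient converges to the corresponding Clarke generalized directional derivative — yields
$$
(g^{v_j^{t}}_j)^{\circ}(\bar{x};d)\geq t\,(g^{v_j^{1}}_j)^{\circ}(\bar{x};d)+(1-t)\,(g^{v_j^{2}}_j)^{\circ}(\bar{x};d),\qquad\forall\,d\in X,
$$
where $g^{v}_j:=g_j(\cdot,v)$. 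Consequently, for $x_1^{*}\in\partial_x^C g_j(\bar{x},v_j^{1})$ and $x_2^{*}\in\partial_x^C g_j(\bar{x},v_j^{2})$ we have $\langle tx_1^{*}+(1-t)x_2^{*},d\rangle\leq(g^{v_j^{t}}_j)^{\circ}(\bar{x};d)$ for all $d$, i.e. $tx_1^{*}+(1-t)x_2^{*}\in\partial_x^C g_j(\bar{x},v_j^{t})\subseteq\mathcal{D}_j$, which proves convexity. I expect this to be the main obstacle: transferring concavity from the parameter $v_j$ to the map $v_j\mapsto(g^{v_j}_j)^{\circ}(\bar{x};d)$ is precisely where Clarke regularity is essential, since for a merely locally Lipschitz $g_j(\cdot,v_j)$ the $\limsup$ in the definition of $(\cdot)^{\circ}$ need not respect convex combinations.

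For \textbf{(b)}, note that $\mathcal{V}_j(\bar{x})=\{v\in\mathcal{V}^{\varepsilon_j}_j(\bar{x}):g_j(\bar{x},v)\geq G_j(\bar{x})\}$ is closed in the compact set $\mathcal{V}^{\varepsilon_j}_j(\bar{x})$ (assumption~(i)) — closedness following from upper semicontinuity of $g_j(\bar{x},\cdot)$ on $\mathcal{V}^{\varepsilon_j}_j(\bar{x})$ (assumption~(ii)) — hence $\mathcal{V}_j(\bar{x})$ is compact. By assumption~(iii) (in its Clarke version), with the first argument frozen at $\bar{x}$, the set-valued mapping $v_j\mapsto\partial_x^C g_j(\bar{x},v_j)$ is weak$^*$ closed at each $\bar{v}_j\in\mathcal{V}_j(\bar{x})$. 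Lemma~\ref{lem}, applied with $Y=\mathcal{V}_j$, $S=\mathcal{V}_j(\bar{x})$ and $F(v_j)=\partial_x^C g_j(\bar{x},v_j)$, then gives that $\mathcal{D}_j=F(\mathcal{V}_j(\bar{x}))$ is weak$^*$ closed. Combined with (a), $\mathcal{D}_j$ is a weak$^*$-closed convex set, so $cl^*co\,\{\partial_x^C g_j(\bar{x},v_j):v_j\in\mathcal{V}_j(\bar{x})\}=\mathcal{D}_j$; substituting this into Corollary~\ref{co2-neckkt} as above completes the proof.
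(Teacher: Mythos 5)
Your proposal is correct and follows essentially the same route as the paper's own proof: reduce to Corollary~\ref{co2-neckkt} by showing that, for each $j\in J$, the set $\bigcup_{v_j\in\mathcal{V}_j(\bar{x})}\partial_x^C g_j(\bar{x},v_j)$ is convex (via Clarke regularity together with concavity of $g_j(x,\cdot)$, which is exactly the paper's computation) and weak$^*$ closed (via compactness of $\mathcal{V}_j(\bar{x})$ and Lemma~\ref{lem}), so that the weak$^*$-closed convex hull collapses to this union. Your write-up merely makes explicit the extraction of the $v_j$'s, the complementarity relation, and the non-triviality of the $\lambda_i$'s, steps the paper leaves implicit when it invokes Corollary~\ref{co2-neckkt}.
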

%------------------
\begin{proof}
It is sufficient to demonstrate that $\digamma_j:=\big\{\partial_x^C g_j(\bar{x},v_j): v_j\in\mathcal{V}_j(\bar{x})\big\}$ is weak$^*$ closed and convex for each $j\in J$.
Then, the desired result is derived by applying Corollary \ref{co2-neckkt}. The set of points at which a concave function attains its supremum on a convex set, is a convex set.
So, $\mathcal{V}_j(\bar{x})$ is a convex set for each $j\in J$.
On the other hand, $\mathcal{V}_j(\bar{x})$ is a closed subset of the compact set $\mathcal{V}_j^{\epsilon_j}(\bar{x})$ (assumption (i)). Therefore, it is also a compact set. First, we justify $\digamma_j$ is a convex set.
According to the properties of Clarke's generalized gradient, $\partial_x^C g_j(\bar{x},v_j)$ is a convex set \cite{cla-f}.
Assume that $\hat{v}_j, \tilde{v}_j\in \mathcal{V}_j(\bar{x})$, $x_1^* \in \partial_x^C g_j(\bar{x},\hat{v}_j)$,
$x_1^* \in \partial_x^C g_j(\bar{x},\tilde{v}_j)$, and $0<\gamma<1$.
We prove that $$\bar{x}^*:=\gamma x_1^* +(1-\gamma)x^*_2 \in \partial_x^C g_j(\bar{x},\bar{v}_j),$$
where $\bar{v}_j :=\gamma\hat{v}_j+(1-\gamma)\tilde{v}_j$.
Using the regularity and concavity assumption and definition of Clarke's generalized gradient, for each $d\in X$, we get
$$\begin{array}{ll}
\langle \bar{x}^* ,d\rangle &= \gamma\langle x_1^*,d\rangle +(1-\gamma)\langle x^*_2, d\rangle
\leq \gamma (g_j^{\hat{v}_j})^\circ(\bar{x};d) + (1-\gamma) (g_j^{\tilde{v}_j})^\circ(\bar{x};d)\vspace*{2mm}\\
&= \gamma \displaystyle\lim_{t\downarrow 0} \frac{g_j(\bar{x}+td,\hat{v}_j)-g_j(\bar{x},\hat{v}_j)}{t} + (1-\gamma) \lim_{t\downarrow 0} \frac{g_j(\bar{x}+td,\tilde{v}_j)-g_j(\bar{x},\tilde{v}_j)}{t}\vspace*{2mm}\\
&=\displaystyle\lim_{t\downarrow 0} \frac{\big(\gamma g_j(\bar{x}+td,\hat{v}_j)+(1-\gamma)g_j(\bar{x}+td,\tilde{v}_j)\big)-
\big(\gamma g_j(\bar{x},\hat{v}_j)+(1-\gamma)g_j(\bar{x},\tilde{v}_j)\big)}{t}\vspace*{2mm}\\
&\leq \displaystyle\lim_{t\downarrow 0} \frac{g_j(\bar{x}+td,\bar{v}_j)-g_j(\bar{x},\bar{v}_j)}{t}= (g_j^{\bar{v}_j})^\circ(\bar{x};d).
\end{array}$$
Thus, $\bar{x}^* \in \partial_x^C g_j(\bar{x},\bar{v}_j)$ and convexity of $\digamma_j$ is justified.

Finally, to establish the weak$^*$ closedness of $\digamma_j$, we use the fact that the image of a weak$^*$ closed set-valued function on a compact set is a weak$^*$ closed set.
Since $\digamma_j=\partial_x^C g_j(\bar{x},\mathcal{V}_j(\bar{x}))$,
$\mathcal{V}_j(\bar{x})$ is compact, and $\partial_x^C g_j$ is weak$^*$ closed at $(\bar{x},\bar{v}_j)$ for each
$\bar{v}_j\in \mathcal{V}_j(\bar{x})$, thus $\digamma_j$ is weak$^*$ closed, due to Lemma \ref{lem}.
This completes the proof.
\end{proof}
%-----------------------------------------------------------------------------------

Corollary \ref{co4-neckkt} is a finite-dimensional version of Theorem \ref{neckkt}.

%-----------------------------------------------------------------------------------
\begin{corollary}\label{co4-neckkt}
Let $X$ be a finite-dimensional Banach space, $\bar{x}\in\Omega$, and assumptions (i)-(iii) hold at $\bar{x}$, for each $j\in J$.
If $\bar{x}$ is a local highly robust weakly efficient solution of $(P_\mathcal{U})$, then
for each $u\in\mathcal{U}$ there exist $\lambda_i \geq 0,~i\in I,$ and $\mu_j \geq 0,~j\in J,$
with $\sum^p_{i=1} \lambda_i + \sum^q_{j=1} \mu_j =1$, such that $\displaystyle\mu_j \sup_{v_j\in \mathcal{V}_j} g_j(\bar{x},v_j) = 0$ for each $j\in J$ and
$$
\begin{array}{c}
\sum^p_{i=1} \lambda_i u_i \in \sum^p_{i=1} \lambda_i\circ\partial f_i(\bar{x}) +
\sum^q_{j=1} \mu_j co\, \big\{ \partial_x g_j(\bar{x},v_j) :~ v_j \in \mathcal{V}_j(\bar{x})\big\},
\end{array}
$$
In addition, if $0\notin co\,\big\{\partial_x g_j(\bar{x},v_j): v_j\in\mathcal{V}_j(\bar{x}),~j\in J\big\}$, then $\lambda_i$'s, $i\in I,$ are not all zero.
\end{corollary}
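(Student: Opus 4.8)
The plan is to read this off from Theorem~\ref{neckkt}, the only real tasks being to discharge that theorem's remaining hypotheses in finite dimensions and to strip the weak$^*$-closure from the inclusion~(\ref{kkt1}). A finite-dimensional Banach space is an Asplund space, so the ambient-space requirement is met, and assumptions (i)--(iii) are taken over verbatim. As for assumption (iv): in $\mathbb{R}^n$ bounded sequences are relatively compact, so every lower semicontinuous function is SNEC at every point (compare the discussion around \cite[Definition~1.116]{mor-b1}); hence the SNEC part of (iv) is automatic, and the singular-subdifferential qualification is carried along as in Theorem~\ref{neckkt}. Theorem~\ref{neckkt} then produces, for each $u\in\mathcal{U}$, multipliers $\lambda_i\ge0$, $\mu_j\ge0$ with $\sum_i\lambda_i+\sum_j\mu_j=1$ satisfying (\ref{kkt1}) and (\ref{kkt2}); the latter is exactly the complementarity relation in the statement.

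It remains to check that, for each $j\in J$,
$$cl^*co\,\big\{\partial_x g_j(\bar{x},v_j):~v_j\in\mathcal{V}_j(\bar{x})\big\}=co\,\big\{\partial_x g_j(\bar{x},v_j):~v_j\in\mathcal{V}_j(\bar{x})\big\}.$$
First, $\mathcal{V}_j(\bar{x})$ is compact: it is nonempty (as shown inside the proof of Theorem~\ref{neckkt}), it is contained in the compact set $\mathcal{V}_j^{\varepsilon_j}(\bar{x})$ by (i), and it is closed there because $g_j(\bar{x},\cdot)$ is upper semicontinuous on $\mathcal{V}_j^{\varepsilon_j}(\bar{x})$ by (ii), so that $\mathcal{V}_j(\bar{x})=\{v_j\in\mathcal{V}_j^{\varepsilon_j}(\bar{x}):~g_j(\bar{x},v_j)\ge G_j(\bar{x})\}$ is a superlevel set of an upper semicontinuous function. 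Second, for $v_j\in\mathcal{V}_j(\bar{x})$ the set $\partial_x g_j(\bar{x},v_j)$ is bounded by $L_j$ (by (\ref{eqs}) together with the uniform Lipschitz modulus in (ii)) and closed (by (iii), since the weak$^*$ and norm topologies coincide in finite dimensions), hence compact. Applying Lemma~\ref{lem} with $S:=\mathcal{V}_j(\bar{x})$ and the mapping $v_j\mapsto\partial_x g_j(\bar{x},v_j)$, the union $\bigcup_{v_j\in\mathcal{V}_j(\bar{x})}\partial_x g_j(\bar{x},v_j)$ is closed; being contained in the ball of radius $L_j$ it is compact, so Carath\'{e}odory's theorem shows that its convex hull is compact, in particular closed. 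This gives the displayed identity, and substituting it into (\ref{kkt1}) yields the inclusion in the statement.

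For the qualified conclusion, note that CQ2 at $\bar{x}$ reads $0\notin cl^*co\,\big(\bigcup_{j\in J}\{\partial_x g_j(\bar{x},v_j):~v_j\in\mathcal{V}_j(\bar{x})\}\big)$, and by the same argument (a finite union of compact sets, whose convex hull is again compact and hence closed) this is equivalent to $0\notin co\,\{\partial_x g_j(\bar{x},v_j):~v_j\in\mathcal{V}_j(\bar{x}),~j\in J\}$; so the hypothesis added in the corollary is exactly CQ2, and the final sentence of Theorem~\ref{neckkt} delivers that the $\lambda_i$'s are not all zero. I expect the only delicate point to be this closure-removal bookkeeping---resting on (\ref{eqs}), assumption (iii), compactness of $\mathcal{V}_j(\bar{x})$, Lemma~\ref{lem}, and Carath\'{e}odory's theorem---while verifying that assumption (iv) of Theorem~\ref{neckkt} is automatic in finite dimensions is routine.
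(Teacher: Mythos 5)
Your proposal is correct and follows essentially the same route as the paper: finite dimensionality gives the Asplund property, compactness of $\mathcal{V}_j(\bar{x})$ plus assumption (iii), Lemma \ref{lem}, the uniform Lipschitz bound via (\ref{eqs}), and compactness of the convex hull of a compact set are used to replace $cl^*co$ by $co$ in (\ref{kkt1}), after which Theorem \ref{neckkt} delivers the multipliers. You are in fact slightly more explicit than the paper in noting that the SNEC part of assumption (iv) is automatic in finite dimensions (while the singular-subdifferential qualification must still be carried along) and in spelling out the equivalence of the corollary's condition $0\notin co\{\cdot\}$ with CQ2.
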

%------------------
\begin{proof}
Let us fix $j\in J$.
Set $\Upsilon_j := \big\{ \partial_x g_j(\bar{x},v_j) :~ v_j \in \mathcal{V}_j(\bar{x})\big\}.$
Since $\mathcal{V}_j^{\epsilon_j}(\bar{x})$ is compact (assumption (i)) and
$\mathcal{V}_j(\bar{x}) \subseteq \mathcal{V}_j^{\epsilon_j}(\bar{x})$ is a closed set, by upper semicontinuity of the function $g_j(x,\cdot)$ on $\mathcal{V}^{\varepsilon_j}_j(\bar{x})$ for each $x$ around $\bar{x}$ (assumption (ii)), $\mathcal{V}_j(\bar{x})$ is compact.
Thus, according to Lemma \ref{lem},
$\Upsilon_j =\partial_x g_j(\bar{x},\mathcal{V}_j(\bar{x})),$
is closed utilizing the closedness of $\partial_x g_j$ at $(\bar{x},\bar{v}_j)$ for each
$\bar{v}_j\in \mathcal{V}_j(\bar{x})$ (assumption (iii)).
On the other hand, uniformly local Lipschitzness of $g_j(\cdot,v_j)$ at $\bar{x}$ with modulus $L_j>0$ (assumption (ii))
ensures boundedness of $\Upsilon_j$ by (\ref{eqs}).
Hence, $\Upsilon_j$ is a compact set.
It yields $co\,\Upsilon_j$ is compact as well using \cite[Theorem 3.20]{rud}.
Now, the desired result is obtained from Theorem \ref{neckkt}.
\end{proof}
%-----------------------------------------------------------------------------------

In the last part of this section, we obtain sufficient conditions for local highly robust weakly/strictly efficient solutions of $(P_\mathcal{U})$.
First, we introduce a highly robust KKT condition.

%-----------------------------------------------------------------------------------
\begin{definition}\label{kkt}
Given $\bar{x}\in \Omega$, we say that highly robust KKT condition for $(P_\mathcal{U})$ at $\bar{x}$ is satisfied
if for each $u\in\mathcal{U}$ there exist $\lambda_i \geq 0,~i\in I,$ with $\sum^p_{i=1} \lambda_i =1$ and $\mu_j \geq 0,~j\in J,$
such that (\ref{kkt3}) and (\ref{kkt4}) hold.
\end{definition}
%-----------------------------------------------------------------------------------

It can be seen that highly robust KKT conditions may not yield highly robustness even when the functions
are smooth and the objective functions are not perturbed, $\mathcal{U}=\{0\}$; see \cite[Example 3.8]{chu-o}.
So, to provide a sufficient condition for (local) highly robust weak/strict efficiency,
we introduce a generalized convexity notion at a point on a set for a family of vector- and real-valued functions under uncertainty.

%-----------------------------------------------------------------------------------
\begin{definition}
Let $\bar{x}\in \Omega$. We say that $(f,g)$ of $(P_\mathcal{U})$ is \\
(i) generalized convex at $\bar{x}$ if for each $x \in \Omega$
and each $u\in \mathcal{U}$ there exists $d\in X$ such that
\begin{equation*}\label{gc1}\begin{array}{l}
\langle x^*_i, d\rangle \leq f_i(x,u_i) - f_i(\bar{x},u_i),~~~\forall x^*_i\in \partial_x f_i(\bar{x},u_i),~\forall i\in I,\vspace*{1mm}\\
\langle x^*_j, d\rangle \leq g_j(x,v_j) - g_j(\bar{x},v_j),~~~\forall x^*_j\in \partial_x g_j(\bar{x},v_j),~\forall v_j\in \mathcal{V}_j(\bar{x}),~\forall j\in J.
\end{array}
\end{equation*}
(ii) strictly generalized convex at $\bar{x}$ if for each $x \in \Omega$
and each $u\in \mathcal{U}$ there exists $d\in X$ such that
\begin{equation*}\label{gc2}\begin{array}{l}
\langle x^*_i, d\rangle < f_i(x,u_i) - f_i(\bar{x},u_i),~~~\forall x^*_i\in \partial_x f_i(\bar{x},u_i),~\forall i\in I,\vspace*{1mm}\\
\langle x^*_j, d\rangle \leq g_j(x,v_j) - g_j(\bar{x},v_j),~~~\forall x^*_j\in \partial_x g_j(\bar{x},v_j),~\forall v_j\in \mathcal{V}_j(\bar{x}),~\forall j\in J.
\end{array}
\end{equation*}
If the conditional parts of the above four inequalities occur for each $x\in \Omega\cap\mathcal{B}$ for some neighbourhood $\mathcal{B}$
of $\bar{x}$ instead of $\Omega$, then this concepts are defined locally.
\end{definition}
%-----------------------------------------------------------------------------------

The aforementioned generalized convexity notion is a type of convexity defined in \cite{chu-o}. %%M%%, where Equations \cref{gc1} and \cref{gc2} are satisfied for each $x\in X$ and $f$ is deterministic.
It is clear that (local) convexity implies generalized (local) convexity.
Generalized (locally) convex functions at a given point on a
set form a large collection rather than (locally) convex functions; see \cite[Example 3.10]{chu-o}.

%-----------------------------------------------------------------------------------
\begin{theorem}\label{sufkkt}
Let the highly robust KKT conditions be satisfied at $\bar{x}\in \Omega$. If $(f,g)$ is generalized locally convex at $\bar{x}$, then $\bar{x}$ is a local highly robust weakly efficient solution of $(P_\mathcal{U})$.
%%M%%\begin{itemize}
%%M%%\item[i)] If $(f,g)$ is generalized locally convex at $\bar{x}$, then $\bar{x}$ is a local highly robust weakly efficient solution of $(P_\mathcal{U})$.
%%M%%\item[ii)] If $(f,g)$ is strictly generalized locally convex at $\bar{x}$, then $\bar{x}$ is a local highly robust strictly efficient solution of $(P_\mathcal{U})$.
%%M%%\end{itemize}
\end{theorem}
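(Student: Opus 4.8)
The plan is to argue by contradiction along the lines of the proof of Theorem~\ref{suf1}: from the highly robust KKT inclusion I extract multipliers and subgradients, from generalized local convexity I extract a single direction $d$, and I pair the two to force $0<0$. So fix an arbitrary scenario $u\in\mathcal{U}$, let $\mathcal{B}$ be the neighbourhood of $\bar{x}$ furnished by the generalized local convexity of $(f,g)$, and suppose towards a contradiction that there were $\hat{x}\in(\Omega\cap\mathcal{B})\setminus\{\bar{x}\}$ with $f_i(\hat{x},u_i)<f_i(\bar{x},u_i)$ for every $i\in I$. Applying the highly robust KKT condition at $\bar{x}$ to this $u$ yields $\lambda_i\geq 0$ with $\sum_{i=1}^p\lambda_i=1$, $\mu_j\geq 0$ with $\mu_j\sup_{v_j\in\mathcal{V}_j}g_j(\bar{x},v_j)=0$, together with subgradients $x_i^*\in\partial f_i(\bar{x})$ and elements $w_j^*\in cl^*co\,\{\partial_x g_j(\bar{x},v_j):v_j\in\mathcal{V}_j(\bar{x})\}$ such that $\sum_{i=1}^p\lambda_i u_i=\sum_{i=1}^p\lambda_i x_i^*+\sum_{j=1}^q\mu_j w_j^*$, which I would rewrite as the identity $\sum_{i=1}^p\lambda_i(x_i^*-u_i)+\sum_{j=1}^q\mu_j w_j^*=0$ in $X^*$. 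Here the exact subgradient sum rule records that $x_i^*-u_i\in\partial_x f_i(\bar{x},u_i)$ for each $i$, since $f_i(\cdot,u_i)=f_i(\cdot)-\langle u_i,\cdot\rangle$.

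Next I would invoke generalized local convexity at $\hat{x}$ and $u$ to obtain a single $d\in X$ with $\langle x_i^*-u_i,d\rangle\leq f_i(\hat{x},u_i)-f_i(\bar{x},u_i)<0$ for all $i\in I$, and with $\langle x_j^*,d\rangle\leq g_j(\hat{x},v_j)-g_j(\bar{x},v_j)$ for all $x_j^*\in\partial_x g_j(\bar{x},v_j)$, all $v_j\in\mathcal{V}_j(\bar{x})$ and all $j\in J$. For every index $j$ with $\mu_j>0$ the complementarity condition (\ref{kkt4}) forces $G_j(\bar{x})=0$, hence $g_j(\bar{x},v_j)=0$ whenever $v_j\in\mathcal{V}_j(\bar{x})$, while $\hat{x}\in\Omega$ gives $g_j(\hat{x},v_j)\leq G_j(\hat{x})\leq 0$; therefore $\langle x_j^*,d\rangle\leq 0$ for every $x_j^*\in\bigcup_{v_j\in\mathcal{V}_j(\bar{x})}\partial_x g_j(\bar{x},v_j)$. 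Since $x^*\mapsto\langle x^*,d\rangle$ is weak$^*$ continuous, the half-space $\{x^*\in X^*:\langle x^*,d\rangle\leq 0\}$ is weak$^*$-closed and convex and so contains $cl^*co\,\{\partial_x g_j(\bar{x},v_j):v_j\in\mathcal{V}_j(\bar{x})\}$; in particular $\langle w_j^*,d\rangle\leq 0$, which also holds trivially when $\mu_j=0$.

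Finally I would pair the multiplier identity with $d$. Multiplying $\langle x_i^*-u_i,d\rangle<0$ by $\lambda_i\geq 0$ and summing yields $\sum_{i=1}^p\lambda_i\langle x_i^*-u_i,d\rangle<0$, the strict sign coming from $\sum_{i=1}^p\lambda_i=1$, which forces at least one $\lambda_i$ to be positive; multiplying $\langle w_j^*,d\rangle\leq 0$ by $\mu_j\geq 0$ and summing yields $\sum_{j=1}^q\mu_j\langle w_j^*,d\rangle\leq 0$. Adding the two and using linearity of $\langle\cdot,d\rangle$ gives $\big\langle\sum_{i=1}^p\lambda_i(x_i^*-u_i)+\sum_{j=1}^q\mu_j w_j^*,\,d\big\rangle<0$, which contradicts the identity $\sum_{i=1}^p\lambda_i(x_i^*-u_i)+\sum_{j=1}^q\mu_j w_j^*=0$. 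Hence no such $\hat{x}$ can exist, and since $u\in\mathcal{U}$ was arbitrary, $\bar{x}$ is a local highly robust weakly efficient solution of $(P_\mathcal{U})$.

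I expect the only delicate points to be that the sign condition $\langle\cdot,d\rangle\leq 0$ must survive the passage to the weak$^*$-closed convex hull appearing in (\ref{kkt3}) --- which is ensured by weak$^*$ continuity of evaluation at the fixed $d\in X$ --- and the bookkeeping between active and inactive constraints, governed by the complementarity (\ref{kkt4}) (the terms with $\mu_j=0$ simply drop out). Crucially, the objective clause and the constraint clause must be fed the \emph{same} direction $d$, which is exactly what the definition of generalized local convexity of $(f,g)$ provides; the strictly-efficient statement would follow verbatim upon replacing the objective inequalities by strict ones.
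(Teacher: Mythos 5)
Your proof is correct and follows essentially the same route as the paper: extract multipliers and subgradients from the highly robust KKT condition, obtain one common direction $d$ from generalized local convexity, and use complementarity plus feasibility of $\hat{x}$ to kill the constraint terms. The only cosmetic differences are that you pass the sign condition to the weak$^*$-closed convex hull via a half-space argument (the paper instead proves $\langle y^*_j,d\rangle\le G_j(\hat{x})-G_j(\bar{x})$ by convex combinations and net limits) and you conclude by a direct contradiction rather than through the weighted-sum scalarization inequality and the cited scalarization result; both variants are sound.
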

%------------------
\begin{proof}
Set
$\Upsilon_j :=\big\{ \partial_x g_j(\bar{x},v_j) :~ v_j \in \mathcal{V}_j(\bar{x})\big\}$ for $j\in J.$
First, we show that if
$$\langle y^*_j, d\rangle \leq g_j(x,v_j) - g_j(\bar{x},v_j),~~~~\forall y^*_j\in \partial_x g_j(\bar{x},v_j),~\forall v_j\in \mathcal{V}_j(\bar{x}),~\forall j\in J,$$
for some $x\in \Omega$ and $d\in X$, then
\begin{equation}\label{eq0-sufkkt}
\langle y^*_j, d\rangle \leq G_j(x) - G_j(\bar{x}),~~~~\forall y^*_j\in cl^*co\,\Upsilon_j,~\forall j\in J.
\end{equation}
Let $y^*_j\in co\,\Upsilon_j$.
Then, there exist $n\in \mathbb{N}$, $\lambda_{jk} \geq 0,~k=1,2,\ldots, n,$
with $\sum_{k=1}^n \lambda_{jk}=1$ and $y^*_{jk}\in \partial_x g_j(\bar{x},v_{jk}),~k=1,2,\ldots, n,$ with
$v_{jk}\in \mathcal{V}_j(\bar{x})$ such that
$y^*_j = \sum_{k=1}^n \lambda_{jk} y^*_{jk}.$
We get
$$
\begin{array}{l}
\langle y^*_j, d\rangle = \langle \displaystyle\sum_{k=1}^n \lambda_{jk} y^*_{jk}, d\rangle \leq \displaystyle\sum_{k=1}^n \lambda_{jk} \big(g_j(x,v_{jk}) - g_j(\bar{x},v_{jk})\big)\leq G_j(x) - G_j(\bar{x}),
\end{array}
$$
noting that $g_j(x,v_{jk}) \leq G_j(x)$ and $g_j(\bar{x},v_{jk})=G_j(\bar{x})$ for each $k$.
Now, let $y^*_j\in cl^* co\,\Upsilon_j$.
Then, there exists a net $\{y^*_{j\alpha}\}_{\alpha} \subseteq co\,\Upsilon_j$ with
$y^*_{j\alpha} \overset{w^*}{\longrightarrow} y^*_j$ as $\alpha \longrightarrow\infty$.
In this situation,
$\langle y^*_{j\alpha}, d\rangle \leq G_j(x) - G_j(\bar{x})$ for each $\alpha$. Hence, by $\alpha\rightarrow \infty$, we have $\langle y^*_j, d\rangle \leq G_j(x) - G_j(\bar{x})$, and (\ref{eq0-sufkkt}) is valid.

Let $(f,g)$ be generalized convex at $\bar{x}$ on $\Omega \cap \mathcal{B}$
for some neighbourhood $\mathcal{B}$ of $\bar{x}$, $\hat{x} \in\Omega\cap\mathcal{B},$ and $\hat{u}\in\mathcal{U}$.
According to the definition of the highly robust KKT conditions, there exist $\lambda_i \geq 0,~i\in I,$
with $\sum^p_{i=1} \lambda_i =1$, $\mu_j \geq 0,~j\in J,$ $\bar{x}^*_i \in \partial f_i(\bar{x}),~i\in I,$ and
$\bar{y}^*_j\in cl^* co\, \Upsilon_j,~j\in J,$ such that
\begin{equation}\label{eq1-sufkkt}
\sum^p_{i=1} \lambda_i \hat{u}_i=\sum^p_{i=1} \lambda_i \bar{x}^*_i + \sum^q_{j=1} \mu_j \bar{y}^*_j,
%%M%%\end{equation}
%%M%%\begin{equation}\label{eq11-sufkkt}
~~\textmd{and}~~
\displaystyle\mu_j \sup_{v_j\in \mathcal{V}_j} g_j(\bar{x},v_j) = 0,~~\forall j\in J.
\end{equation}
On account of generalized convexity of $(f,g)$ at $\bar{x}$ on $\Omega \cap \mathcal{B}$, \cite[Proposition 1.107]{mor-b1},
and (\ref{eq0-sufkkt}), there is some $d\in X$ that
\begin{equation}\label{eq2-sufkkt}
\langle \bar{x}^*_i - \hat{u}_i ,d\rangle \leq f_i(\hat{x},\hat{u}_i)-f_i(\bar{x},\hat{u}_i),~~~\forall i\in I,
\end{equation}
\begin{equation}\label{eq3-sufkkt}
\langle \bar{y}^*_j ,d\rangle \leq G_j(\hat{x})-G_j(\bar{x}),~~~\forall j\in J.
\end{equation}
Combining (\ref{eq1-sufkkt}), (\ref{eq2-sufkkt}), and (\ref{eq3-sufkkt}), we get
$$\begin{array}{l}
\displaystyle\sum^p_{i=1} \lambda_i f_i(\bar{x},\hat{u}_i) \leq \displaystyle\sum^p_{i=1} \lambda_i f_i(\hat{x},\hat{u}_i) -
\sum^p_{i=1} \lambda_i \langle \bar{x}^*_i - \hat{u}_i ,d\rangle\\
~~~~~~~~= \displaystyle\sum^p_{i=1} \lambda_i f_i(\hat{x},\hat{u}_i) + \sum^q_{j=1} \mu_j \langle \bar{y}^*_j ,d\rangle\leq \displaystyle\sum^p_{i=1} \lambda_i f_i(\hat{x},\hat{u}_i) + \sum^q_{j=1} \mu_j \big(G_j(\hat{x})-G_j(\bar{x}) \big).
\end{array}
$$
By KKT condition (\ref{eq1-sufkkt}), $\mu_j G_j(\bar{x})=0,~j\in J,$ and as $\hat{x}\in \Omega$, $G_j(\hat{x})\leq 0,~j\in J$.
So, we obtain
$\sum^p_{i=1} \lambda_i f_i(\bar{x},\hat{u}_i) \leq \sum^p_{i=1} \lambda_i f_i(\hat{x},\hat{u}_i).$
Hence, for each $u\in \mathcal{U}$, the vector $\bar{x}$ is a locally optimal solution of
$\begin{array}{l}
\min \sum^p_{i=1} \lambda_i f_i(x,u_i)~~s.t.~~x\in \Omega.
\end{array}$
This implies that $\bar{x}$ is a local weakly efficient solution of $(P_u)$ for each $u\in \mathcal{U}$ by \cite[Proposition 3.9]{ehr-b}.
Thus, $\bar{x}$ is a local highly robust weakly efficient solution of $(P_\mathcal{U})$ and the proof is complete.
\end{proof}
%-----------------------------------------------------------------------------------

A result similar to Theorem \ref{sufkkt} can be obtained for local highly robust strictly efficient solutions under strict generalized local convexity. Furthermore, analogous to Theorem \ref{sufkkt}, a sufficient condition for \textit{global} highly robustness can be derived.

\section{Highly Robustness With Special Uncertain Sets}\label{section5}

In this section, we study highly robustness for an uncertain multi-objective problem $(P_\mathcal{U})$ with a special uncertain set, including ellipsoidal, ball, or a polyhedral set.

%============================================================
%============================================================
%============================================================
%============================================================
%============================================================

\subsection{Ellipsoidal And Ball Uncertain Sets}\label{sub1-section4}

Let the uncertain set $\mathcal{U}_i,~i\in I,$ be an ellipsoidal or ball with  $0\in int\, \mathcal{U}_i,~i\in I$.
The results of this subsection are valid for any uncertain set with the origin as an interior point.

Let $d\in X\setminus \{0\}$ be constant.
Define subspace $Y:=\{\alpha d : \alpha \in \Bbb R\}$ of $X$ and the continuous linear functional
$h(\alpha d):=\alpha\Vert d\Vert$ on space $Y$.
Invoking a consequence of Hahn-Banach theorem \cite[Corollary 6.5]{con}, there exists a continuous linear functional $x^*\in X^*$ where
$\langle x^*,d\rangle=\Vert d\Vert > 0$. So, due to $0\in int\, \mathcal{U}_i,~i\in I,$ without loss of generality,
one can get $x^* \in\mathcal{U}_i,~i\in I$ with $\langle x^*,d\rangle>0$.
Thus,
\begin{equation}\label{eq-sub1}
\forall d\in X\setminus \{0\}, ~\exists u \in \mathcal{U};~ \langle u_i,d\rangle > 0,~\forall i\in I.
\end{equation}
In this situation, due to Proposition \ref{hrwe-se}, each highly robust weakly efficient solution of $(P_\mathcal{U})$
is a strictly efficient solution of $(\ref{prb})$.
Furthermore, the conditions (\ref{eq1-h-pf}) and (\ref{eq-h-pi}) in Theorems \ref{h-pf} and \ref{h-pi}, respectively, can be omitted.

The following theorem gives us relationships between highly robust efficient solutions and isolated efficient solutions.

\begin{theorem}
Let $\bar{x}\in\Omega$ and $\mathbb{B}(0;L)\subseteq \mathcal{U}_i,~i\in I$ for some $L>0$.
\begin{itemize}
\item[i)] If $\bar{x}$ is an isolated efficient solution of $(\ref{prb})$ with constant $L$,
then $\bar{x}$ is a highly robust strictly efficient solution of $(P_{\bar{\mathcal{U}}})$ with
$\bar{\mathcal{U}}_i:= \mathbb{B}(0;L),~i\in I$.
\item[ii)] If $X$ is a reflexive Hilbert space and $\bar{x}$ is a highly robust weakly efficient solution of $(P_\mathcal{U})$,
then $\bar{x}$ is an isolated efficient solution of $(\ref{prb})$ with any constant less than $L$.
\end{itemize}
\end{theorem}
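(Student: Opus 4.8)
The plan for part (i) is to recognize it as a direct instance of Theorem~\ref{i-h}. Since $\bar{\mathcal{U}}_i = \mathbb{B}(0;L)$ is the \emph{open} ball, every $u_i \in \bar{\mathcal{U}}_i$ satisfies $\Vert u_i\Vert < L$, so the boundedness hypothesis of Theorem~\ref{i-h} holds with this very $L$. The assumed isolated efficiency of $\bar{x}$ for $(\ref{prb})$ with constant $L$ then yields at once that $\bar{x}$ is a highly robust strictly efficient solution of $(P_{\bar{\mathcal{U}}})$. No further argument is required.

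For part (ii), the idea is to apply Theorem~\ref{h-i} with an arbitrary constant $L' \in (0,L)$ in place of $L$, and then let $L'$ tend to $L$. So fix $L' \in (0,L)$. To invoke Theorem~\ref{h-i} with constant $L'$ one must verify that $\mathcal{A}_{L'}(x) \cap \mathcal{U}_i \neq \emptyset$ for every $x \in \Omega$ and every $i \in I$, where $\mathcal{A}_{L'}(x)$ is the set of $x^* \in X^*$ with $\langle x^*, x-\bar{x}\rangle = L'\Vert x-\bar{x}\Vert$. For $x=\bar{x}$ this is immediate, since then $\mathcal{A}_{L'}(\bar{x}) = X^*$ and $\mathcal{U}_i \neq \emptyset$. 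For $x \neq \bar{x}$, I would use the Riesz representation of the Hilbert space $X$: let $u^* \in X^*$ be the functional represented by the vector $L'\,\frac{x-\bar{x}}{\Vert x-\bar{x}\Vert} \in X$. Then $\langle u^*, x-\bar{x}\rangle = L'\Vert x-\bar{x}\Vert$ and $\Vert u^*\Vert = L' < L$, hence $u^* \in \mathbb{B}(0;L) \subseteq \mathcal{U}_i$, so $u^* \in \mathcal{A}_{L'}(x) \cap \mathcal{U}_i$.

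With the hypothesis verified, Theorem~\ref{h-i} gives that $\bar{x}$ is an isolated efficient solution of $(\ref{prb})$ with constant $L'$. As $L' \in (0,L)$ was arbitrary, $\bar{x}$ is an isolated efficient solution of $(\ref{prb})$ with any constant less than $L$, which is the claim. I would also remark on why one cannot in general reach the constant $L$ itself: $\mathbb{B}(0;L)$ contains no vector of norm $L$, and the Cauchy--Schwarz inequality forces $\Vert u^*\Vert \geq L$ whenever $\langle u^*, x-\bar{x}\rangle = L\Vert x-\bar{x}\Vert$ with $x \neq \bar{x}$; hence $\mathcal{A}_L(x) \cap \mathbb{B}(0;L)$ may be empty and Theorem~\ref{h-i} need not apply with constant $L$.

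As for the main obstacle: there is essentially none of substance, since both halves reduce immediately to Theorems~\ref{i-h} and~\ref{h-i}. The only points needing a little care are the use of the \emph{open}-ball hypothesis (which forces the strict inequality $L' < L$ and hence the formulation ``any constant less than $L$''), and the degenerate case $x=\bar{x}$ when checking the hypothesis of Theorem~\ref{h-i}.
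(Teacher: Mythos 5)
Your proposal is correct and follows essentially the same route as the paper: part (i) is exactly the paper's one-line appeal to Theorem~\ref{i-h} (using that the open ball forces $\Vert u_i\Vert<L$), and part (ii) mirrors the paper's argument, which for fixed $\bar L\in(0,L)$ takes $u_i$ to be the Riesz functional of $\bar L\,\frac{x-\bar x}{\Vert x-\bar x\Vert}$ to verify $\mathcal{A}_{\bar L}(x)\cap\mathcal{U}_i\neq\emptyset$ and then invokes Theorem~\ref{h-i}. Your extra remarks on the degenerate case $x=\bar x$ and on why the constant $L$ itself is unattainable are fine additions but not needed.
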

%------------------
\begin{proof}
%%%Since $0\in int\, \mathcal{U}_i,~i\in I$, there exist scalar $L>0$ such that $\bar{\mathcal{U}}_i:= \mathbb{B}(0;L)\subseteq \mathcal{U}_i,~i\in I$.
The first part is derived from Theorem \ref{i-h}.
To prove part (ii), let $\bar{L}\in (0,L)$ be arbitrary and constant.
For each $x\in \Omega \setminus \{\bar{x}\}$, set $u_i:=\frac{\bar{L}}{\Vert x-\bar{x}\Vert}(x-\bar{x}),~i\in I$.
As $X$ is a reflexive Hilbert space, $u=(u_1,u_2,\ldots,u_p)\in \bar{\mathcal{U}}\subseteq \mathcal{U}$
and $\langle u_i, x-\bar{x}\rangle =\bar{L} \Vert x-\bar{x}\Vert$.
So, $\bar{x}$ is an isolated efficient solution of $(\ref{prb})$ with constant $\bar{L}$, due to Theorem \ref{h-i}.
%%%Now, since $\bar{L}\in (0,L)$ is arbitrary, $\bar{x}$ is an isolated efficient solution of $(\ref{prb})$ with constant $L$.
\end{proof}
%-----------------------------------------------------------------------------------

%%M%%In \cref{nec-sub1,suf-sub1}, some necessary and sufficient conditions to characterize highly robust efficient solutions are obtained.

%-----------------------------------------------------------------------------------
\begin{theorem}\label{nec-sub1}
Let $0\in int\, \mathcal{U}_i,~i\in I$ and $X$ be a WCG Asplund space.
Let $\bar{x}\in\Omega$ be a local highly robust weakly efficient solution of $(P_\mathcal{U})$
and the functions $f_i,~i\in I$, be locally Lipschitz at $\bar{x}$.
Then,
$$T_w(\bar{x};\Omega) \bigcap \big\{d\in X : \langle x^*, d\rangle\leq 0,~\forall x^*\in \partial f_i(\bar{x}),
~\forall i\in I\big\}=\{0\}.$$
\end{theorem}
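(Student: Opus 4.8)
The plan is to combine the necessary condition of Theorem~\ref{nec1} with the Hahn--Banach consequence recorded in~(\ref{eq-sub1}). First I would note that $0$ belongs to both sets appearing on the left-hand side: indeed $0\in T_w(\bar{x};\Omega)$, as seen by taking the constant sequence $x_\nu\equiv\bar{x}\in\Omega$ together with any $t_\nu\downarrow 0$, so that $\frac{x_\nu-\bar{x}}{t_\nu}=0\overset{w}{\longrightarrow}0$; and $\langle x^*,0\rangle=0\leq 0$ for every $x^*$. Hence the intersection always contains $\{0\}$, and it remains to show it contains nothing else.

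Arguing by contradiction, suppose there is $d\in T_w(\bar{x};\Omega)$ with $d\neq 0$ such that $\langle x^*,d\rangle\leq 0$ for every $x^*\in\partial f_i(\bar{x})$ and every $i\in I$. Since $d\neq 0$ and $0\in int\,\mathcal{U}_i$ for each $i\in I$, the relation (\ref{eq-sub1}) — which was obtained via a corollary of the Hahn--Banach theorem — yields some $u=(u_1,u_2,\ldots,u_p)\in\mathcal{U}$ with $\langle u_i,d\rangle>0$ for all $i\in I$. Combining the two inequalities, for each $i\in I$ and each $x^*\in\partial f_i(\bar{x})$ we obtain
$$\langle x^*-u_i,d\rangle=\langle x^*,d\rangle-\langle u_i,d\rangle\leq -\langle u_i,d\rangle<0.$$

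This, however, asserts the existence of $d\in T_w(\bar{x};\Omega)$ and $u\in\mathcal{U}$ with $\langle x^*-u_i,d\rangle<0$ for all $x^*\in\partial f_i(\bar{x})$ and all $i\in I$, which is precisely what Theorem~\ref{nec1} rules out for a local highly robust weakly efficient solution $\bar{x}$, given that $X$ is a WCG Asplund space and the $f_i$ are locally Lipschitz at $\bar{x}$. The contradiction forces $d=0$, so the intersection equals $\{0\}$.

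I do not expect a real obstacle here: the statement is essentially a direct corollary of Theorem~\ref{nec1} once the geometric fact (\ref{eq-sub1}) is in hand. The only points needing a little care are the bookkeeping of strict versus non-strict inequalities — the argument hinges on the \emph{strict} inequality $\langle u_i,d\rangle>0$, which is available exactly because $d\neq 0$ and the origin is an interior point of each $\mathcal{U}_i$ — and the (routine) check that $0$ lies in the intersection, so that the displayed identity is an equality and not merely an inclusion.
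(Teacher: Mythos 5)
Your argument is correct and coincides with the paper's own proof, which likewise just combines Theorem \ref{nec1} with the Hahn--Banach fact (\ref{eq-sub1}); your extra check that $0$ lies in the intersection is a harmless routine addition.
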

%---------------------
\begin{proof}
Apply Theorem \ref{nec1} accompanying (\ref{eq-sub1}).
\end{proof}
%-----------------------------------------------------------------------------------

%-----------------------------------------------------------------------------------
\begin{theorem}\label{suf-sub1}
Let $0\in int\, \mathcal{U}_i,~i\in I$.
Let $X$ be a reflexive space, $\bar{x}\in\Omega$, and
$0\not \in \{ d\in X : \exists x\in X\setminus \{0\}; ~ \frac{x}{\Vert x\Vert} \overset{w}{\longrightarrow} d\}.$
Let there exist no $d\in X\setminus \{0\}$ such that
$$\langle x^*,d\rangle\leq 0,~~~\forall x^*\in \partial f_i(\bar{x}),~~\forall i\in I.$$
\begin{itemize}
\item[i)] If $f$ is generalized locally convex at $\bar{x}$ on $\Omega$, then
$\bar{x}$ is a local highly robust weakly efficient solution of $(P_{\bar{\mathcal{U}}})$
for some ${\bar{\mathcal{U}}} \subseteq\mathcal{U}$.
\item[ii)] If $f$ is strictly generalized locally convex at $\bar{x}$ on $\Omega$, then
$\bar{x}$ is a local highly robust strictly efficient solution of $(P_{\bar{\mathcal{U}}})$
for some ${\bar{\mathcal{U}}} \subseteq\mathcal{U}$.
\end{itemize}
\end{theorem}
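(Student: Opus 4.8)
The plan is to deduce the result from Theorem~\ref{suf1} by producing a sufficiently small sub-uncertainty set $\bar{\mathcal{U}}\subseteq\mathcal{U}$ for which the separation hypothesis of that theorem holds. Since $0\in int\,\mathcal{U}_i$ for every $i\in I$ (and $I$ is finite), there is $\delta>0$ with $\{u\in X^*:\Vert u\Vert\le\delta\}\subseteq\mathcal{U}_i$ for all $i$; for a radius $r\in(0,\delta]$ to be fixed later I would set $\bar{\mathcal{U}}_i:=\{u_i\in X^*:\Vert u_i\Vert\le r\}$ and $\bar{\mathcal{U}}:=\prod_{i\in I}\bar{\mathcal{U}}_i\subseteq\mathcal{U}$. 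Because $\bar{\mathcal{U}}\subseteq\mathcal{U}$, the defining inequalities of (strict) generalized local convexity are required for fewer scenarios, so $f$ remains (strictly) generalized locally convex at $\bar{x}$ on $\Omega$ with respect to $\bar{\mathcal{U}}$. Hence, once the separation hypothesis of Theorem~\ref{suf1} is verified for $\bar{\mathcal{U}}$, parts (i) and (ii) follow by applying Theorem~\ref{suf1}(i), resp. (ii), to $(P_{\bar{\mathcal{U}}})$.

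The crux is the choice of $r$. Put
$$\rho(d):=\sup_{i\in I}\ \sup_{x^*\in\partial f_i(\bar{x})}\langle x^*,d\rangle,\qquad d\in X,$$
the support function of $\bigcup_{i\in I}\partial f_i(\bar{x})$ (finite-valued or $+\infty$; the standing hypothesis forces this union to be nonempty). That hypothesis says exactly that $\rho(d)>0$ for every $d\in X\setminus\{0\}$. The key claim is that this pointwise positivity upgrades to the uniform bound $m:=\inf_{\Vert d\Vert=1}\rho(d)>0$. I would prove it by contradiction: assume $m=0$ and pick $d_\nu$ with $\Vert d_\nu\Vert=1$ and $\rho(d_\nu)\to 0$. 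Since $X$ is reflexive, after passing to a subsequence $d_\nu\overset{w}{\longrightarrow}d_0$ with $\Vert d_0\Vert\le 1$. Writing $d_\nu=d_\nu/\Vert d_\nu\Vert$ with $d_\nu\in X\setminus\{0\}$, the constraint qualification (that $0$ is not a weak limit of normalized nonzero vectors) gives $d_0\neq 0$. On the other hand $\rho$, being a supremum of weakly continuous linear functionals, is weakly sequentially lower semicontinuous, so $\rho(d_0)\le\liminf_\nu\rho(d_\nu)=0$; but $d_0\neq 0$ forces $\rho(d_0)>0$, a contradiction. Establishing this uniform lower bound is the main obstacle, and it is precisely where reflexivity and the constraint qualification are used — playing here the role that CQ1 plays in Theorem~\ref{h-pi}.

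Finally, fix any $r$ with $0<r<\min\{\delta,m/2\}$ and the corresponding $\bar{\mathcal{U}}$. I then check that there are no $d\in X$ and $u\in\bar{\mathcal{U}}$ with $\langle x^*-u_i,d\rangle<0$ for all $x^*\in\partial f_i(\bar{x})$ and all $i\in I$. If $d=0$ the left-hand side is $0$, not negative; if $d\neq 0$, one may take $\Vert d\Vert=1$ by positive homogeneity of the inequalities, and since $\rho(d)\ge m>r$ there exist $i_0\in I$ and $x_0^*\in\partial f_{i_0}(\bar{x})$ with $\langle x_0^*,d\rangle>r\ge\Vert u_{i_0}\Vert\,\Vert d\Vert\ge\langle u_{i_0},d\rangle$, so $\langle x_0^*-u_{i_0},d\rangle>0$, contradicting the required strict inequality for $i=i_0$. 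Thus $\bar{\mathcal{U}}$ satisfies the hypothesis of Theorem~\ref{suf1}, and applying part (i) (resp. part (ii)) of that theorem to $(P_{\bar{\mathcal{U}}})$, together with the (resp. strict) generalized local convexity of $f$, shows that $\bar{x}$ is a local highly robust weakly (resp. strictly) efficient solution of $(P_{\bar{\mathcal{U}}})$, completing the proof.
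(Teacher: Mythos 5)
Your proposal is correct and follows essentially the same route as the paper: both reduce the statement to Theorem~\ref{suf1} by taking $\bar{\mathcal{U}}$ to be a product of small norm balls around the origin, and both rule out the bad alternative via weak sequential compactness of the unit ball in the reflexive space together with the hypothesis that $0$ is not a weak limit of normalized vectors, which forces the weak limit $d\neq 0$ and contradicts the positivity assumption on $\bigcup_{i\in I}\partial f_i(\bar{x})$. The only difference is cosmetic: you make the admissible radius explicit through the uniform bound $m=\inf_{\Vert d\Vert=1}\rho(d)>0$, whereas the paper obtains the same conclusion by a contradiction argument over shrinking uncertainty sets with $u_n\to 0$.
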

%---------------------
\begin{proof}
We claim that for some ${\bar{\mathcal{U}}} \subseteq\mathcal{U}$ there exist no $d\in X$ and $u\in \bar{\mathcal{U}}$ such that
$$\langle x^*-u_i,d\rangle< 0,~~~\forall x^*\in \partial f_i(\bar{x}),~~\forall i\in I.$$
Then, the desired result is derived by Theorem \ref{suf1}.
Assume that our claim is not valid.
Since $0\in int\, \mathcal{U}_i,~i\in I,$ without loss of generality, there exist sequences
$\{d_n\}\subseteq X$ and $\{u_n\} \subseteq \bigcap_{i=1}^p\mathcal{U}_i$
with $\Vert d_n\Vert =1$ and $u_n\longrightarrow 0$ such that
$\langle x^*-u_n,d_n\rangle< 0,$ for each $x^*\in \bigcup_{i=1}^p\partial f_i(\bar{x}).$
As each bounded sequence in a reflexive space has a weakly convergent subsequence,
by passing to a subsequence if necessary, $d_n\overset{w}{\longrightarrow} d$ for some $d\in X\setminus \{0\}$.
Thus,
$\langle x^*,d\rangle\leq 0,$ for each $x^*\in \bigcup_{i=1}^p\partial f_i(\bar{x}),$
which makes a contradiction.
\end{proof}
%-----------------------------------------------------------------------------------

%============================================================
%============================================================
%============================================================
%============================================================
%============================================================

\subsection{Polyhedral Uncertain Sets}\label{sub1-section4}

In the current subsection, we assume, for each $i\in I$, the uncertain set $\mathcal{U}_i$ is represented as
the convex hull of finitely many vertex points and finitely many extreme directions in $X^*$, i.e., there exist a vertex set
$\mathcal{U}_i^{ep} :=\{w_1^i, w_2^i, \ldots , w_{s_i}^i\}\subseteq X^*$ and
a direction set $\mathcal{U}_i^{ed} := \{d^i_1, d^i_2, \ldots , d^i_{r_i}\}\subseteq X^*$ such that
\begin{equation}\label{eq0-sub2}
\mathcal{U}_i = co\,\mathcal{U}_i^{ep} + co\,cone\,\mathcal{U}_i^{ed}.
\end{equation}
For each $u_i\in \mathcal{U}_i$,
there exist scalars $\alpha_1^i, \alpha_2^i,\ldots,\alpha_{s_i}^i\geq 0$ with $\sum_{k=1}^{s_i} \alpha_k^i =1$ and
$\beta_1^i, \beta_2^i,\ldots,\beta_{r_i}^i\geq 0$ such that
$u_i = \sum_{k=1}^{s_i} \alpha_k^i w_k^i + \sum_{t=1}^{r_i} \beta_t^i d_t^i.$
When $\mathcal{U}_i$ is bounded, we assume $\mathcal{U}_i^{ed}=\emptyset$.

For each $i\in I$, we define $\mathcal{U}_i^{epd}$ as follows. In fact, $\mathcal{U}_i^{epd}$ is the union of the rays constructed by vertex points and directions of $\mathcal{U}_i^{ep}$.
$$\mathcal{U}^{epd}_i := \big\{w_k^i+\gamma d_t^i : w_k^i\in\mathcal{U}_i^{ep},~ d_t^i \in \mathcal{U}_i^{ed},~
\gamma\geq 0,~ k=1,2,\ldots, s_i,~ t=1,2,\ldots, r_i \big\}.$$
Now, we define
$\mathcal{U}^{ep} := \mathcal{U}^{ep}_1\times \mathcal{U}^{ep}_2\times \ldots \times \mathcal{U}^{ep}_p,$ and $\mathcal{U}^{epd} := \mathcal{U}^{epd}_1\times \mathcal{U}^{epd}_2\times \ldots \times \mathcal{U}^{epd}_p.$
%%%where for any $i\in I$, the uncertain set $\mathcal{U}^{epd}_i$ is the collection of rays with the vertex point of
%%%$\mathcal{U}_i^{ep}$ and the direction of $\mathcal{U}_i^{ed}$ as follow:
%%%$$\mathcal{U}^{epd}_i := \big\{w_k^i+\gamma d_t^i : w_k^i\in\mathcal{U}_i^{ep},~ d_t^i \in \mathcal{U}_i^{ed},~
%%%\gamma\geq 0,~ k=1,2,\ldots, s_i,~ t=1,2,\ldots, r_i \big\}.$$

%%%\textcolor{red}{In finite-dimensional case, invoking \cite[Corollary 3.53]{roc-v}, it is proved that the formed uncertain set $\mathcal{U}_i$ as \cref{eq0-sub2}
%%%is polyhedral and sets $\mathcal{U}_i^{ep}$ and $\mathcal{U}_i^{ed}$ are extreme points set and extreme directions set of
%%%$\mathcal{U}_i$, respectively.

%%%In special case, for each $i\in I$, if the uncertain set $\mathcal{U}_i$ is box as
%%%$$\mathcal{U}_i := [w_i,\bar{w}_i],$$
%%%where $w_i,\bar{w}_i \in X^*$, then $\mathcal{U}_i^{ep} = \{w_i,\bar{w}_i\}$ and
%%%$$\mathcal{U} := [w_1,\bar{w}_1]\times [w_2,\bar{w}_2]\times \ldots \times [w_p,\bar{w}_p].$$

%-----------------------------------------------------------------------------------

In Theorem \ref{t1-sub1}, we show that highly robustness with uncertain sets $\mathcal{U}^{epd}_i,~i\in I,$ %%%which are the collection of rays with the vertex point of $\mathcal{U}_i^{ep},i\in I,$ and the direction of $\mathcal{U}_i^{ed},i\in I,$
is sufficient for highly robustness with uncertain set $\mathcal{U}$.

%-----------------------------------------------------------------------------------
\begin{theorem}\label{t1-sub1}
$\bar{x}\in \Omega$ is a highly robust efficient solution of $(P_\mathcal{U})$ if and only if
it is a highly robust efficient solution of $(P_{\mathcal{U}^{epd}})$.
\end{theorem}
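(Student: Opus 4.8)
The plan is to prove the two implications separately. Since for $\gamma\ge 0$ every point $w^i_k+\gamma d^i_t$ of a ray belongs to $co\,\mathcal{U}^{ep}_i+co\,cone\,\mathcal{U}^{ed}_i=\mathcal{U}_i$, we have $\mathcal{U}^{epd}_i\subseteq\mathcal{U}_i$ for each $i\in I$, hence $\mathcal{U}^{epd}\subseteq\mathcal{U}$; so the ``only if'' direction is immediate from Definition \ref{hrob}. For the converse I argue by contradiction: assuming $\bar x$ is highly robust efficient for $(P_{\mathcal{U}^{epd}})$ but not for $(P_\mathcal{U})$, there exist $\hat u\in\mathcal{U}$ and $\hat x\in\Omega\setminus\{\bar x\}$ with $f(\hat x,\hat u)\leq f(\bar x,\hat u)$, which componentwise reads
$$f_i(\hat x)-f_i(\bar x)\le\langle\hat u_i,\hat x-\bar x\rangle,\qquad\forall i\in I,$$
with strict inequality for some index $i_0\in I$. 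The idea is to replace $\hat u$ by a scenario $\bar u\in\mathcal{U}^{epd}$ for which the same inequalities hold, contradicting high robustness over $\mathcal{U}^{epd}$.

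The key step is the elementary claim: for every $i\in I$, every $u_i\in\mathcal{U}_i$, and every $w\in X$ there exists $\bar u_i\in\mathcal{U}^{epd}_i$ with $\langle\bar u_i,w\rangle\ge\langle u_i,w\rangle$. To prove it, write $u_i=\sum_{k=1}^{s_i}\alpha^i_k w^i_k+\sum_{t=1}^{r_i}\beta^i_t d^i_t$ with $\alpha^i_k\ge0$, $\sum_k\alpha^i_k=1$, $\beta^i_t\ge0$, so that $\langle u_i,w\rangle=\sum_k\alpha^i_k\langle w^i_k,w\rangle+\sum_t\beta^i_t\langle d^i_t,w\rangle$. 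If $\langle d^i_t,w\rangle\le0$ for every $t$ (in particular when $\mathcal{U}_i$ is bounded, i.e., $r_i=0$), then $\langle u_i,w\rangle\le\max_{1\le k\le s_i}\langle w^i_k,w\rangle$, and one takes $\bar u_i:=w^i_{k(i)}$ for a maximizing vertex $w^i_{k(i)}$ (which belongs to $\mathcal{U}^{epd}_i$, taking $\gamma=0$). Otherwise $\langle d^i_{t_0},w\rangle>0$ for some $t_0$; picking any vertex, say $w^i_1$, and $\gamma:=\max\bigl\{0,\,(\langle u_i,w\rangle-\langle w^i_1,w\rangle)/\langle d^i_{t_0},w\rangle\bigr\}$, the point $\bar u_i:=w^i_1+\gamma d^i_{t_0}\in\mathcal{U}^{epd}_i$ satisfies $\langle\bar u_i,w\rangle=\langle w^i_1,w\rangle+\gamma\langle d^i_{t_0},w\rangle\ge\langle u_i,w\rangle$.

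Applying this claim with $w:=\hat x-\bar x$ to each $\hat u_i$ yields $\bar u_i\in\mathcal{U}^{epd}_i$ with $\langle\bar u_i,\hat x-\bar x\rangle\ge\langle\hat u_i,\hat x-\bar x\rangle$, whence $f_i(\hat x)-f_i(\bar x)\le\langle\bar u_i,\hat x-\bar x\rangle$ for all $i\in I$ and strictly for $i_0$. Equivalently, $f(\hat x,\bar u)\leq f(\bar x,\bar u)$ with $\bar u:=(\bar u_1,\ldots,\bar u_p)\in\mathcal{U}^{epd}$ and $\hat x\in\Omega\setminus\{\bar x\}$, which contradicts the highly robust efficiency of $\bar x$ for $(P_{\mathcal{U}^{epd}})$ and finishes the converse.

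I expect the only delicate point to be the treatment of the recession directions: one must observe that a direction with $\langle d^i_t,w\rangle>0$ makes the linear functional $\langle\cdot,w\rangle$ unbounded above along $\mathcal{U}^{epd}_i$, so the target value $\langle\hat u_i,w\rangle$ is always reachable, and one must separately handle the bounded case $r_i=0$ together with the convention $\mathcal{U}^{epd}_i=\mathcal{U}^{ep}_i$ when $\mathcal{U}^{ed}_i=\emptyset$. The remainder is bookkeeping with the objective-wise product structure of $\mathcal{U}$ and $\mathcal{U}^{epd}$, and the weak/strict-efficiency analogues are obtained in the same way.
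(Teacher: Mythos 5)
Your proposal is correct and takes essentially the same route as the paper: the ``only if'' part follows from $\mathcal{U}^{epd}\subseteq\mathcal{U}$, and the converse is proved by contradiction, decomposing the violating scenario $\hat u_i=\sum_k\alpha_k^i w_k^i+\sum_t\beta_t^i d_t^i$ and replacing it by a point $w_{k_i}^i+\gamma_i d_{t_i}^i\in\mathcal{U}_i^{epd}$ for which the inequalities $f_i(\hat x)-f_i(\bar x)\le\langle\cdot,\hat x-\bar x\rangle$ persist. Your explicit dominance claim (that $\langle\hat u_i,\hat x-\bar x\rangle$ can always be matched or exceeded on $\mathcal{U}_i^{epd}$, via a maximizing vertex or an unbounded ray) is just a cleaner packaging of the selection the paper performs directly from the weighted-sum inequality, and your remark on the bounded case $\mathcal{U}_i^{ed}=\emptyset$ handles the same implicit convention the paper uses.
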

%---------------------
\begin{proof}
The ``only if" part is trivial as $\mathcal{U}^{epd}\subseteq \mathcal{U}$.
To prove the ``if" part, suppose $\bar{x}$ is a highly robust efficient solution of $(P_{\mathcal{U}^{epd}})$ while it is not a highly robust efficient solution of $(P_\mathcal{U})$.
Then, there exist $\hat{x}\in \Omega\setminus\{\bar{x}\}$ and $\hat{u}\in \mathcal{U}$ so that
%%%$f(\hat{x},\hat{u})\leq f(\bar{x},\hat{u})$.
%%%This means that
\begin{equation}\label{eq1-sub2}
f_i(\hat{x})-f_i(\bar{x}) - \langle\hat{u}_i,\hat{x}-\bar{x}\rangle  \leq 0,~~~\forall i\in I,
\end{equation}
%%%and for some $j \in I$,
$$f_j(\hat{x})-f_j(\bar{x}) - \langle\hat{u}_j,\hat{x}-\bar{x}\rangle < 0, \textmd{ for some } j \in I.$$
As $\hat{u}\in \mathcal{U}$, for each $i\in I$,
there exist scalars $\alpha_1^i, \alpha_2^i,\ldots,\alpha_{s_i}^i\geq 0$ with $\sum_{k=1}^{s_i} \alpha_k^i =1$ and
$\beta_1^i, \beta_2^i,\ldots,\beta_{r_i}^i\geq 0$ such that
%%M%%\begin{equation}\label{eq2-sub2}
$\hat{u}_i = \sum_{k=1}^{s_i} \alpha_k^i w_k^i + \sum_{t=1}^{r_i} \beta_t^i d_t^i.$
%%M%%\end{equation}
Let $i\in I$ be constant. By combining (\ref{eq1-sub2}) with the last equality, we get
$$
\begin{array}{ll}
%%M%%&\displaystyle f_i(\hat{x})-f_i(\bar{x}) - \sum_{k=1}^{s_i} \alpha_k^i \langle w_k^i,\hat{x}-\bar{x}\rangle -
%%M%% \sum_{t=1}^{r_i} \beta_t^i \langle d_t^i,\hat{x}-\bar{x}\rangle \leq 0\vspace*{1mm}\\
%%M%%\Longrightarrow
&\displaystyle \sum_{k=1}^{s_i} \alpha_k^i \big(f_i(\hat{x})-f_i(\bar{x}) -  \langle w_k^i,\hat{x}-\bar{x}\rangle\big) -
 \sum_{t=1}^{r_i} \beta_t^i \langle d_t^i,\hat{x}-\bar{x}\rangle \leq 0.
\end{array}
$$
This implies there exist $k_i \in \{1,2,\ldots, s_i\}$, $t_i \in \{1,2,\ldots, s_i\}$, and $\gamma_i\geq 0$ such that
$$
\begin{array}{ll}
%%M%%&f_i(\hat{x})-f_i(\bar{x}) -  \langle w_{k_i}^i,\hat{x}-\bar{x}\rangle - \gamma_i\langle d_{t_i}^i,\hat{x}-\bar{x}\rangle
%%M%%\leq 0\vspace*{2mm}\\
%%M%%\Longrightarrow
&f_i(\hat{x})-\langle w_{k_i}^i+\gamma_i d_{t_i}^i,\hat{x}\rangle \leq  f_i(\bar{x}) - \langle w_{k_i}^i+ \gamma_i d_{t_i}^i,\bar{x}\rangle.
\end{array}
$$
Analogously, there exist $k_j \in \{1,2,\ldots, s_j\}$, $t_j \in \{1,2,\ldots, s_j\}$, and $\gamma_j\geq 0$ such that
$$
f_j(\hat{x})-\langle w_{k_j}^j+\gamma_j d_{t_j}^j,\hat{x}\rangle <  f_j(\bar{x}) - \langle w_{k_j}^j+ \gamma_j d_{t_j}^j,\bar{x}\rangle.
$$
Set $\bar{u}_i:=w_{k_i}^i+\gamma_i d_{t_i}^i,~i\in I,$ and $\bar{u}:=(\bar{u}_1,\bar{u}_2,\ldots,\bar{u}_p)$.
Then, $\bar{u}\in \mathcal{U}^{epd}$ and $f(\hat{x},\bar{u})\leq f(\bar{x},\bar{u})$
which contradicts highly robustness of $\bar{x}$ for $(P_{\mathcal{U}^{epd}})$
and the proof is complete.
\end{proof}
%-----------------------------------------------------------------------------------

%%===========================================================================================%%
%% If you are submitting to one of the Nature Portfolio journals, using the eJP submission   %%
%% system, please include the references within the manuscript file itself. You may do this  %%
%% by copying the reference list from your .bbl file, paste it into the main manuscript .tex %%
%% file, and delete the associated \verb+\bibliography+ commands.                            %%
%%===========================================================================================%%

%\bibliography{sn-bibliography}% common bib file
%% if required, the content of .bbl file can be included here once bbl is generated
%%\input sn-article.bbl

\end{document}